\documentclass[11pt]{article}

\usepackage[pagewise]{lineno}
\usepackage{amssymb}
\usepackage{latexsym,amsfonts,amssymb,amsmath,amsthm}
\usepackage{graphicx}
\usepackage{cite}
\usepackage{amsthm}
\usepackage{amsfonts}
\usepackage{xr}
\usepackage{graphicx}
\usepackage{amsmath}
\usepackage{color}
\usepackage{amssymb}
\usepackage{mdwlist}

\newtheorem{theorem}{Theorem}[section]
\newtheorem{definition}{Definition}[section]

\newtheorem{lemma}[theorem]{Lemma}
\newtheorem{rem}[theorem]{Remark}

\numberwithin{equation}{section}

\makeatletter % `@' now normal "letter"
\@addtoreset{equation}{section}
\makeatother  % `@' is restored as "non-letter"

\newcommand\norm[1]{\lVert#1\rVert}
\newcommand\abs[1]{\lvert#1\rvert}

\begin{document}

\title{On the persistence of $k$-exponential separation of linear cocycles under a small perturbation}
\author{Lirui Feng\thanks{School of Mathematical Sciences, University of Science and Technology of China, Hefei, Anhui, 230026, People's Republic of China (ruilif@ustc.edu.cn,\,ruilif@163.com), Supported by NSF of China No.12331006 and the Chinese Academy of Sciences through grant: XDB0900100 (Sub-project: XDB0900101).}}
\date{}
\maketitle
\begin{abstract}
In this paper, the concept about a $k$-exponential separation of a linear cocycle $(\tilde{F},\mathcal{G})$ on $\tilde{K}\times X$ is extended for a general linear coclye whose base space $\tilde{K}$ and fibre-map-value map $\mathcal{G}$ become a nonempty set and a continuous map from $\tilde{K}$ to $L(X)$ respectvely, by removing the prior assumptions in the classical sense that $\tilde{K}$ is a compact set contained in the Banach space $X$, and $\mathcal{G}(x)$ is compact for any $x\in K$.  We prove that $(\tilde{F},\mathcal{G})$ on $\tilde{K}\times X$ admits a $k$-exponential separation if the cocycle $(\tilde{F},\mathcal{G})$ is generated from a linear cocycle $(F,\mathcal{T})$ on $K\times X$ adimtting a $k$-exponential separation with $K$ being compact in the classical sense via a small perturbation. We also obtain some consequent results with their needed concepts spinning off from the one of a $k$-exponential separation of $(\tilde{F},\mathcal{G})$ on $\tilde{K}\times X$, as well as the unified terminology system around $k$-exponential separation is normalized. We apply our results to analyze the linearized structure near by an invariant set of a system generated from a dissipative system via a small perturbation, where the small perturbation is without the restriction of compactness.
\end{abstract}

\section{Introduction} A $k$-exponential separation of a linear cocycle is a very important property for dynamical analysis of differential equations and dynamical systems (See e.g. \cite{C-H,F-Wu,H-P-M,L-R-S,M-S-jde03,M-S-tams-13,M-S-jmaa-13,P-1,Pu,Pu-S,P-T,S-Y,Tere}).  The linear cocycle $(\tilde{F},\mathcal{G})$ on the product space $\tilde{K}\times X$ is a map with the cocycle property (more precisely, see Def. \ref{Def-cocyle}), which can be extracted from the variational equation or the  difference equation of various nonlinear evolution equations. Roughly speaking, a $k$-exponential separation of the linear cocycle $(\tilde{F},\mathcal{G})$ is that the product space $\tilde{K}\times X$ be decomposed as a whitney sum of an invariant $k$-dimensional continuous subbundle $\tilde{K}\times (\tilde{P}_x)$ and a  positive invariant $k$-codimensional continuous subbundle $\tilde{K}\times (\tilde{Q}_x)$ with respect to $(\tilde{F},\mathcal{G})$ such that for any $x\in \tilde{K}$, the proportion of the evolution rate of vectors originating from $\tilde{P}_x\setminus\{0\}$ to the one of vectors originating from $\tilde{Q}_x\setminus\{0\}$ under the action of $(\tilde{F},\mathcal{G})$ is controlled by an exponential function (more precisely, see Def.\,\ref{k-ES}). 

In the process of deveopment of theory of $k$-exponential separation of a linear cocyle with infinite dimensional fibre spaces,  Pol\'{a}$\check{c}$ik and Tere$\check{s}\check{c}\acute{a}$k (See \cite{P-T}) proved that a linear cocyle on the certain product space admits a $1$-exponential separation in the classical sense that the base space and fibre maps of the linear cocycle are compact by utilizing strong positivity of fibre maps with respect to a solid convex cone (see Def.\,2.4 and the concepts related to a $k$-cone; see Def.\,\ref{pos-opera} for positivity and strong positivity with respect to a cone). Later on, Tere$\check{s}\check{c}\acute{a}$k (See \cite{Tere}) proved that a linear cocyle on the certain product space admits a $k$-exponential separation in the classical sense for a general positive integer $k$ under the assumption that the fibre maps of the linear cocycle are strongly positive with respect to a complemented and $k$-solid cone (see Def.\,\ref{k-cone} and the concepts related to a $k$-cone). In brief, strong positivity of a linear bounded operator $T$ with respect to a cone $C_{alg}$ is a property that $T(C_{alg}\setminus\{0\})\subset {\rm Int} C_{alg}$. Naturally, to investigate $k$-exponential separation of a linear cocyle based on the strong positivity of fibre maps of the linear cocyle is a branch of deveopment of theory of $k$-exponential separation of a linear cocycle with infinite dimensional fibre spaces. Strong positivity of these fibre maps with respect to a cone is a very important feasible condition that can be verified in various systems generated by evolution equations, but it is easy to be destroyed by a small perturbation in general. The fact draws our attention to investigating the persistence of $k$-exponential separation of a linear cocycle in itself rather than the attempt to constrain the fibre maps of the linear cocycle to keep strong positivity with respect to $k$-cones under a small perturbation with surprised additional restrictions.

Clearly, the linear cocycle $(\tilde{F},\mathcal{G})$ on $\tilde{K}\times X$ can be formed from a linear cocycle on its product space via a small perturbation on three aspects including the force-map, the base space and the fibre map-value map. In order to avoid lossing the persistence of the continuity of positive invariant bundles with respect to the linear cocyle, we assume that after a small perturbation, the force-map $\tilde{F}$ is a homoemorphism on the nonempty base space $\tilde{K}$, and both of $\tilde{F}$ and the fibre-map-value map $\mathcal{G}$ are continuous on a neighborhood of $\tilde{K}$, as well as we extend the concpet about a $k$-exponential separation for such a kind of linear cocycle (see Def.\,\ref{k-ES}) by removing the compactness assumption on $\tilde{K}$ and $\mathcal{G}(x),\,x\in\tilde{K}$ in the classical sense. Actually, the continuity of $\tilde{F}$ and $\mathcal{G}$ can not be omited in step 6, 7, 8 of our proof for the main theorem, and these steps are devoted to prove the continuity and uniqueness of the positive invariant bundles with respect to $(\tilde{F}, \mathcal{G})$ mentioned in the decomposition property referring to $k$-exponential separation (see Def.\,\ref{k-ES}). However, without the continuity of $\tilde{F}$ and $\mathcal{G}$, we prove that all properties of the linear cocycle $(\tilde{F},\,\mathcal{G})$ mentioned in $k$-exponential separation hold except the continuity of the positive invariant bundles whose whitney sum is the product space $\tilde{K}\times X$ (see Def.\,\ref{k-ES}) in step 1,2,3,4,5 of our proof for the main theorem. For convience to describe these results, we extend the concept of a linear cocycle and give the concept of $k$-exponential separation type property for a more general $(\tilde{F},\mathcal{G})$ on $\tilde{K}\times X$ that $\tilde{F}$ is just a map such that $\tilde{F}\tilde{K}=\tilde{K}$, and $\mathcal{G}$ is just a map from $\tilde{K}$ to $L(X)$ (see Remark \ref{k-ES-tp}).

Our main result in this paper is that a linear cocycle $(\tilde{F},\mathcal{G})$ on $\tilde{K}\times X$ keeps a $k$-exponential separation if it is formed from a linear cocycle $(F,\mathcal{T})$ on $K\times X$ admitting a $k$-exponential separation with a nonempty compact base space $K$ via a sufficiently small perturbation. Furthermore, with additional assumption on the compactness of $\tilde{K}$, $(\tilde{F},\mathcal{G})$ on $\tilde{K}\times X$ keeps to have the unique $k$-exponential separation (see Def.\,\ref{k-ES}). Roughly speaking, the sufficiently small perturbation means that $\tilde{K}$, $\tilde{F}$ and $\mathcal{G}$ are sufficiently closed to $K$, $F$ and $\mathcal{T}$ respectively. When we admit the maps $\tilde{F}$, $\mathcal{G}$ occurring in a wider area that $\tilde{F}$ and $\mathcal{G}$ are just maps on $\tilde{K}$ such that $\tilde{F}\tilde{K}=\tilde{K}$, we obtain the consequent results listed in the main corollary including the existence of a $k$-exponential separation type property of $(\tilde{F},\mathcal{G})$ with the its certain base space. For sake of distinguishing the allowed area of the perturbation for the force-map and fibre-map-value map in the main corollary and the one in the main theorem, the force-map and the fibre-map-value map are denoted by $\tilde{F}_{nnh}$ and $\tilde{\mathcal{G}}$ in the main corollary respectively. 

One key of our approach in the proof for the main theorem is to construct invariant cone fields (see Def. \ref{cone-field}) with respect to the linear cocycle $(\tilde{F}^n, \mathcal{G}^n)$ for any integer $n\in[N^{\prime},\,2N^{\prime}-1]$ with a $N^{\prime}\in\mathbb{N}^+$ such that each fibre map of $(\tilde{F}^n, \mathcal{G}^n)$ at $x\in\tilde{K}$, $G^n_x$ is strongly focusing (see Def. \ref{SF-operator}) on the cone of the certain invariant cone field at $x$ with respect to the cone of the certain invariant cone field at $\tilde{F}^n(x)$, where $(\tilde{F}^n, \mathcal{G}^n)$ is the $n$-iteration of the linear cocycle $(\tilde{F}, \mathcal{G})$. Roughly speaking, the image of $C$ of a strongly focusing operator on $C$ with respect to $\tilde{C}$ is a subset of $\tilde{C}$ such that unite vectors contained in it are uniformly separated from the boundary of $\tilde{C}$. A strongly focusing operator originated from Krasnosel\'skij et.al \cite{K-L-S} to prove a Krein-Rutman type theorem with respect to $k$-cones for a single operator, and also from Lian and Wang \cite{L-W} to investigate the relationship between Multiplicative Ergodic Theorem and Krein-Rutman type Theorem for random linear dynamical systems. An important property of a strongly focusing operator is that angle-numbers of two vectors or two linear subspaces (see Def. \ref{angles}) will be contracted under the action of this kind of operators, that implies the certain distance of two vectors or two linear subspaces being contracted under its action. Without injective assumption on a bounded linear operator $T$ on $X$, we give and prove precise properties of angle-numbers themselves and the variation of angle-numbers under the action of $T$ in Lemma \ref{P-alp-0}-\ref{SF-angles}, which are matched to our construction of the cone fields and used to prove the existence of the unique invariant $k$-dimensional bundle with respect to the linear cocycle $(\tilde{F}^n,\,\mathcal{G}^n)$ located in the certain cone field in step 1 and step 2 of the proof for the main theorem respectively. By an argument on the unique invariant $k$-dimensional bundle with respect to different $n$-iteration of $(\tilde{F},\mathcal{G})$ located in the certain cone field, we obtain that there is a unique invariant $k$-dimensional bundle $\tilde{K}\times (\tilde{P}_x)$ with respect to $(\tilde{F},\mathcal{G})$ located in the cone field $\tilde{K}\times (C_x)$. The existence of the positive invariant $k$-codimensional bundle $\tilde{K}\times (\tilde{Q}_x)$ with respect to $(\tilde{F},\mathcal{G})$ is proved in step 4 of the proof for the main theorem based on the construction of the correction maps in step 3 of the proof for the main theorem which is motivated from \cite{L-W} and need complex estimates to deal with the situation that the strong focusing operators are taked from fibre maps of $N^{\prime}$-iteration of the linear cocycle $(\tilde{F},\mathcal{G})$. The separation property of the linear cocycle $(\tilde{F},\mathcal{G})$ proved in the step 5 is also based on the estimates for the construction of the correction maps in step 3. By the previous five steps of the proof for the main theorem, we actually prove that $(\tilde{F},\mathcal{G})$ on $\tilde{K}\times X$ has a $k$-exponential separation type property without the assumption that $\tilde{F}$ is a homoemorphism on $\tilde{K}$ and $\mathcal{G}$ is continuous on a neighberhood of $\tilde{K}$. This result becomes one of the bases for our main corollary.

To prove the continuity of the invariant $k$-dimensional bundle $\tilde{K}\times (\tilde{P}_x)$ and the positive invariant $k$-codimensional bundle $\tilde{K}\times (\tilde{Q}_x)$ is one of dificulties in this paper. The cone fields we construct in step 1 of the proof for the main theorem are based on the $k$-dimensional bundle $\tilde{K}\times (P_{h(x)})$ and the $k$-codimensional bundle $\tilde{K}\times (Q_{h(x)})$, which are shifted from the invariant $k$-dimensional continuous bundle $K\times (P_y)$ and the positive invariant $k$-codimensional continuous bundle $K\times (Q_y)$ with respect to the oringinal linear cocycle $(F,\mathcal{T})$ on $K\times X$. But the shift-map $h$ is just a map to describe $\tilde{K}$ being slightly perturbated from $K$. The irregularity of the slight shift $h$ between the base spaces $K$ and $\tilde{K}$ leads the huge difficulty to construct a $k$-dimensional continuous bundle contained in the cone fields constructed by us in step 1 of the proof for the main theorem as the initial point of a cauchy sequence of $k$-dimensional continuous bundles on $\tilde{K}$ generated by the force of $Nn$-th iteration $(\tilde{F}^{Nn},\,\mathcal{G}^{Nn})$ of $(\tilde{F},\mathcal{G})$ for any integer $n\in\mathbb{N}^+$ and $N\in [N^{\prime}, 2N^{\prime}-1]$ with a $N^{\prime}\in \mathbb{N}^+$ in step 2 of the proof for the main theorem. We discard the way through a cauchy sequence of $k$-dimensional continuous bundles generated by the force of $Nn$-th iteration of $(\tilde{F},\mathcal{G})$ to obtain the invariant $k$-dimensional continuous bundle with respect to $(\tilde{F},\mathcal{G})$. We turn to utilize the $k$-exponential separation type property of the linear cocycle $(\tilde{F},\mathcal{G})$ proved in the previous five steps to explore the approach to prove the continuity of the invariant $k$-dimensional bundle $\tilde{K}\times (\tilde{P}_x)$ and the positive invariant $k$-codimensional bundle $\tilde{K}\times(\tilde{Q}_x)$ with respect to the linear cocycle $(\tilde{F},\mathcal{G})$. For this purpose, a key we grasp from all information is that a sequence of fibre $\tilde{P}_{x_m}$ of the invariant $k$-dimensional bundle $\tilde{K}\times (\tilde{P}_x)$ at the point $x_m$ with $m\in\mathbb{N}^+$ such that all fibres $\tilde{P}_{x_m},\,m\in\mathbb{N}^+$ are uniformly far away from $\tilde{P}_x$ and $x_m$ converges to the supposed discontinuous point $x$ of $\tilde{K}\times  (\tilde{P}_x)$ will be turned to approaching the fibre $\tilde{Q}_{\tilde{F}^{-n}(x)}$ of the positive invariant $k$-codimensional bundle $\tilde{K}\times (\tilde{Q}_x)$ at the point $\tilde{F}^{-n}(x)$ uniformly with respect to sufficiently large $m\in\mathbb{N}^+$ by the action of the inverse on $\tilde{P}_{\tilde{F}^{-n}(x_m)}$ of the fibre map $G_{\tilde{F}^{-n}(x_m)}^n$ of the $n$-iteration $(\tilde{F}^n,\,\mathcal{G}^n)$, as well as a similiar phenonmenon for a sequence of fibre $\tilde{Q}_{x_m}$ of the positive invariant $k$-codimensional bundle $\tilde{K}\times (\tilde{Q}_x)$ exists. Another key to prove the continuity of $\tilde{K}\times (\tilde{P}_x)$ and $\tilde{K}\times (\tilde{Q}_x)$ is the law of variation of the proportion of the projection of a vector in fibre space on the fibre $Q_{h(x)}$ to the one on the fibre $P_{h(x)}$ as $x$ varies in its small neighborhood, which is revealed by a containing relationship (\ref{Is-cf-cndelta}) among the parameterised cone fields we constructed in step 6 of the proof for the main theorem. This law implies that for any point $x^{\prime}$ in a neighborhood of the preimage $\tilde{F}^{-n}(x)$ of the supposed discontinuous point $x\in \tilde{K}$ small enough,  the proportion of the projection of a unit vector in $\tilde{P}_{x^{\prime}}$ on $Q_{h(\tilde{F}^{-n}(x))}$ to the one on $P_{h(\tilde{F}^{-n}(x))}$ is controlled by a upper bound number $\frac{13}{17}$. Together with $\tilde{P}_{\tilde{F}^{-n}(x_m)}$ and $\tilde{F}^{-n}(x_m)$ approaching $\tilde{Q}_{\tilde{F}^{-n}(x)}$ and $x$ respectively, it furthermore implies that there exists at least one unit vector in $\tilde{Q}_{\tilde{F}^{-n}(x)}$ such that the proportion of its projection on $Q_{h(\tilde{F}^{-n}(x))}$ to the one on $P_{h(\tilde{F}^{-n}(x))}$ is with a upper bound number sufficiently closed to $\frac{13}{17}$, i.e., the proportion of its projection on $P_{h(\tilde{F}^{-n}(x))}$ to the one on $Q_{h(\tilde{F}^{-n}(x))}$ is with a lower bound number sufficiently closed to than $\frac{17}{13}$. We know the following three facts: (i) the correction map $\Psi_{\tilde{F}^{-n}(x)}$ for $\tilde{F}^{-n}(x)\in \tilde{K}$ maps vectors in $Q_{h(\tilde{F}^{-n}(x))}$ to $\tilde{P}_{\tilde{F}^{-n}(x)}$ with upper bound $M_4>0$ of $\Psi_{\tilde{F}^{-n}(x)}$ (see (\ref{Covergence-norm-Psi})); (ii) $\tilde{Q}_{\tilde{F}^{-n}(x)}=\{u-\Psi_{\tilde{F}^{-n}(x)}u:\,u\in Q_{h(\tilde{F}^{-n}(x))}\}$ (see (\ref{Def-tiQ})); (iii) the proportion of the projection of a unit vector in $\tilde{P}_{\tilde{F}^{-n}(x)}$ on $Q_{h(\tilde{F}^{-n}(x))}$ to the one on $P_{h(\tilde{F}^{-n}(x))}$ is less than 1 (see (\ref{def-Cx}) and (\ref{tP-inner})). But it is very difficult to get a contradiction on the proportion of the projection of a unit vector in $\tilde{Q}_{\tilde{F}^{-n}(x)}$ on $P_{h(\tilde{F}^{-n}(x))}$ to the one on $Q_{h(\tilde{F}^{-n}(x))}$ by improving the estimates on the upper bound $M_4$. A crucial property $\tilde{Q}_{x^{\prime}}\cap C_{x^{\prime}}=\{0\}$ for any $x^{\prime}\in\tilde{K}$ (see (\ref{loc-tilide-Q})) is dug out by us to connect segments of logical deduction for proving the continuity of $\tilde{K}\times(\tilde{P}_x)$. By virtue of this property, the proportion of the projection of a unit vector in $\tilde{P}_{\tilde{F}^{-n}(x_m)}$ on $Q_{h(\tilde{F}^{-n}(x))}$ to the one on $P_{h(\tilde{F}^{-n}(x))}$ exceeds $\frac{13}{17}$ for any $x_m$ sufficiently closed to $x$, a contradiction to the proportion no more than $\frac{13}{17}$. It follows that $\tilde{K}\times (\tilde{P}_x)$ is continuous. The outline of proof for the continuity of $\tilde{K}\times (\tilde{Q}_x)$ in step 7 is similiar except we need to make clear several properties on the connections among the continuity of $k$-dimensional bundle $\tilde{K}\times (L_x)\subset \tilde{K}\times X^*$ such that ${\rm Ker}\{L_x\}=\tilde{Q}_x$ and the distances among the $k$-codimensional fibre spaces $\tilde{Q}_x$ (see the assertions (As-1)-(As-3) in step 7). The property $\tilde{Q}_{x^{\prime}}\cap C_{x^{\prime}}=\{0\}$ for any $x^{\prime}\in\tilde{K}$ also plays the important role to connect segments of logical deduction in step 7 of the proof for the main theorem. The proof in step 8 is from another paper of ours \cite{F-Wang}, and the results in main corollary are generated from the applications of the main theorem in different cases that are endowed different special invariant sets w.r.t the dynamical system $(\tilde{F}_{nnh},\,\tilde{K})$ as the base spaces. In this paper, we succeed in applying our main theorem and corollary to dissipative systems under a small perturbation, and obtain the persistence of $k$-exponential separation for dissipative systems. 

This paper is organized as follows.  In section 2, we give some definitions and notations. In section 3, we present our main theorem and corollary. In section 4, we prove the main theorem. In section 5, we prove the main corollary. In section 6, we apply our main results to dissipative systems under a small perturbation.

\section{Definitions and Notations}
Let $X$ be a Banach space equipped with the norm $\norm{\cdot}$, and $X^*$ be its dual space with the norm $\norm{\cdot}_*$, where $\norm{l}_*=\sup\limits_{v\in X,\norm{v}=1}\{\abs{l(v)}\}$ for any $l\in X^*$. Denoted by $L(X)$ the space of all linear bounded operators on $X$, equipped with the usual operators' norm $\norm{\cdot}_{L(X)}$. Let $G(k,X)$ be {\it the Grassmanian of $k$-dimensional linear subspaces of $X$}, which consists of all $k$-dimensional linear subspaces in $X$. $G(k, X)$ is a completed metric space by endowing {\it the gap metric} $d$ (see \cite{K, L-L}). More precisely, the gap metric $d$ is defined as

\begin{equation*} d(H_1,H_2)=\max\left\{\sup_{v\in H_1\cap S}\inf_{u\in H_2\cap S}\norm{v-u}, \sup_{v\in H_2\cap S}\inf_{u\in H_1\cap S}\norm{v-u}\right\},
 \end{equation*} for any nontrivial
closed subspaces $H_1, H_2\subset X$. By the same way, we define the gap metric $d^*$ for the Grassmanian $G(k, X^*)$ of $k$-dimensional linear subspaces of $X^*$. Denoted by $S=\{v\in X:\norm{v}=1\}$ and $S^*=\{l\in X^*:\norm{l}_*=1\} $ the unit spheres of $X$ and $X^*$ respectively. 

Let $\tilde{K}$ be a nonempty subset of $X$ in the rest of this paper. Let $\{\tilde{P}_x\}_{x\in \tilde{K}}$ be a set of $k$-dimensional linear subspaces of $X$, and $\{\tilde{Q}_x\}_{x\in \tilde{K}}$ be a set of $k$-codimensional closed linear subspaces of $X$.

\begin{definition}
{\rm (i)} $\tilde{K}\times (\tilde{P}_x)$ is called as {\it a $k$-dimensional vector bundle} on $\tilde{K}$ {\rm(}for short, $k$-dimensional bundle{\rm)} if $\tilde{P}_x\in G(k,\,X)$ for any $x\in \tilde{K}$, and it is called as {\it a $k$-dimensional continuous vector bundle} on $\tilde{K}$ {\rm(}for short, $k$-dimensional continuous bundle{\rm)} if the map $\tilde{K}\mapsto G(k,\,X): x\mapsto \tilde{P}_x$ is continuous in additional;

{\rm (ii)} $\tilde{K}\times (\tilde{Q}_x)$ is called as {\it a $k$-codimensional vector bundle on $\tilde{K}$} {\rm(}for short, $k$-codimensional bundle{\rm)} if there is a $k$-dimensional vector bundle $\tilde{K}\times (L_x)\subset\tilde{K}\times X^*$ such that the kernel ${\rm Ker}(L_x)=\tilde{Q}_x$ for each $x\in \tilde{K}$, and it is called as {\it a $k$-codimensional continuous vector bundle on $\tilde{K}$} {\rm(}for short, $k$-codimensional continuous bundle{\rm)} if the $k$-dimensional vector bundle $\tilde{K}\times (L_x)\subset \tilde{K}\times X^*$ is continuous such that ${\rm Ker}(L_x)=\tilde{Q}_x$ for each $x\in \tilde{K}$. Hereafter, the continuity of $x:\mapsto \tilde{Q}_x$ refers to $x:\mapsto L_x$ being continous from $\tilde{K}$ to $G(k,X^*)$.
\end{definition} 
 
Let $\tilde{K}\times (\tilde{P}_x)$ be a $k$-dimensional continuous bundle on $\tilde{K}$, and $\tilde{K}\times (\tilde{Q}_x)$ be a $k$-codimensional continuous bundle
on $\tilde{K}$ such that $X=\tilde{P}_x\oplus \tilde{Q}_x$ for all  $x\in \tilde{K}$. $\tilde{P}_x$ (resp. $\tilde{Q}_x$) is called {\it the fibre of $\tilde{K}\times(\tilde{P}_x)$ {\rm(}resp. $\tilde{K}\times (\tilde{Q}_x)${\rm)} at $x\in \tilde{K}$}. Let $\mathring{C}$ be a set containing at least a $k$-dimensional linear subpace of $X$ and denote by $G_k(\mathring{C})$ the set consisting of all $k$-dimensional linear subspaces contained in $\mathring{C}$. Denoted by $C(\tilde{K},\,G_k(\mathring{C}))$ the set of all k-dimensional continuous bundle on $\tilde{K}$ such that its fibres belong to $G_k(\mathring{C})$. For each $x\in \tilde{K}$, $X=\tilde{P}_x\oplus \tilde{Q}_x$ implies that the natural projection of $X$ onto $\tilde{P}_x$ along $\tilde{Q}_x$, denoted by $\Pi^{\tilde{P}_x}_{\tilde{Q}}$, is well-defined. Write $\Pi^{\tilde{Q}_x}_{\tilde{P}}={\rm Id}-\Pi^{\tilde{P}_x}_{\tilde{Q}}$ for each $x\in \tilde{K}$, where ${\rm Id}$ is the identity map on $X$. Clearly, $\Pi^{\tilde{Q}_x}_{\tilde{P}}$ is the natural projection of $X$ onto $\tilde{Q}_x$ along $\tilde{P}_x$.

Let $\tilde{F}$ be a homeomorphism on $\tilde{K}$. Clearly, $\tilde{F}\overline{\tilde{K}}=\overline{\tilde{K}}$, where $\overline{\cdot}$ represents to take the closure of a subset of $X$. Let $\mathcal{G}$ be a continuous map from $\mathcal{D}(\mathcal{G})$ to $L(X)$ such that $\tilde{K}\subset \mathcal{D}(\mathcal{G})$ and $\mathcal{G}(x)=G_x$ for any $x\in \mathcal{D}(\mathcal{G})$. Let $\tilde{F}^0={\rm Id}$ and $\tilde{F}^n=\tilde{F}\circ \tilde{F}^{n-1},\,n\in\mathbb{N}^+$; and more, the inverse of $\tilde{F}^n$ is denoted by $\tilde{F}^{-n}$ for any $n\in\mathbb{N}^+$. Denoted by $G^0_x={\rm Id}$ and $G^n_x=G_{\tilde{F}^{n-1}(x)}\circ G^{n-1}_x$ for any $n\in\mathbb{N}^+$ such that $\tilde{F}^i(x)\in\mathcal{D}(\mathcal{G})$ with $i\in\{0,\,1,\,2,\cdots,n-1\}$. For any $n\in \mathbb{N}^+$, define the map $\mathcal{G}^n$ from $\mathcal{D}(\mathcal{G}^n)$ to $L(X)$ by $\mathcal{G}^n(x)=G^n_x$ for any $x\in \mathcal{D}(\mathcal{G}^n)$, where $ \mathcal{D}(\mathcal{G}^n)=\{x\in \mathcal{D}(\mathcal{G}):\,\,{\rm such\,\, that}\,\, \tilde{F}^i(x)\in\mathcal{D}(\mathcal{G})\,\,{\rm for\,\, any}\,\,i\in\{0,\,1,\,2,\cdots,n-1\}\} $.

\begin{definition}\label{Def-cocyle} The linear cocycle of $\tilde{F}$ and $\mathcal{G}$ on $\tilde{K}\times X$, denoted by $(\tilde{F},\mathcal{G})$, is the map

$$\tilde{K}\times X\rightarrow \tilde{K}\times X: \,\,(x,v)\mapsto (\tilde{F}(x), G_x v).$$ Here, $\tilde{F}$ {\rm(}resp. $\mathcal{G}${\rm)} are called as the force-map {\rm(}resp. the fibre-map-value map{\rm)} of $(\tilde{F},\mathcal{G})$; $\mathcal{G}(x)$ is called as the fibre map of $(\tilde{F},\mathcal{G})$ {\rm(}at $x\in \tilde{K}${\rm)}; $\tilde{K}$ {\rm(}resp. $X${\rm)} are called the base {\rm(}resp. fibre{\rm)} space of the linear cocycle $(\tilde{F},\mathcal{G})$ on $K\times X$. It is clear that the $n$-iteration of $(\tilde{F},\mathcal{G})$ is also a linear cocycle on $\tilde{K}\times X$, which equals to $(\tilde{F}^n, \mathcal{G}^n)$ for any $n\in\mathbb{N}^+$.

For sake of roughly speaking, we also say a linear cocycle as a map with the cocycle property.
\end{definition}

\begin{definition}\label{k-ES} $(\tilde{F},\mathcal{G})$ on $\tilde{K}\times X$ admits a $k$-exponential separation {\rm(}abbr. $k$-ES{\rm)} {\rm(}or, a $k$-exponential separation of $(\tilde{F},\mathcal{G})$ on 
$\tilde{K}\times X$ {\rm)} if {\rm(}or, is a property such that{\rm)}

 {\rm (i)} {\bf (Decomposition)} there are a $k$-dimensional continuous bundle $\tilde{K}\times (\tilde{P}_x)$ and a $k$-codimensional continuous bundle $\tilde{K}\times (\tilde{Q}_x)$ such that $X=\tilde{P}_x\oplus \tilde{Q}_x$ for any $x\in \tilde{K}$;

{\rm (ii)} {\bf (Invariance)} $G_x \tilde{P}_x=\tilde{P}_{\tilde{F}(x)}$ and $G_x \tilde{Q}_x\subset \tilde{Q}_{\tilde{F}(x)}$ for any $x\in \tilde{K}$;

{\rm (iii)} {\bf (Separation)} there are constants $\tilde{M}>0$, $\tilde{\gamma}\in (0,1)$ such that 

\begin{equation}\label{Separation}\norm{G^n_x w}\leq \tilde{M} \tilde{\gamma}^n \norm{G^n_x v}\end{equation} for any $v\in \tilde{P}_x\cap S$, $w\in \tilde{Q}_x\cap S$, $x\in \tilde{K}$ and $n\in \mathbb{N}$. 

In additional, if $\tilde{K}\times (\tilde{P}_x)$ and $\tilde{K}\times (\tilde{Q}_x)$ are the unique pair of bundles satisfying Def. {\rm\ref{k-ES}(i)-(iii)}, it is called as $(\tilde{F},\mathcal{G})$ on $\tilde{K}\times X$ has the unique $k$-exponential separation {\rm(}or, the unique $k$-exponential separation of $(\tilde{F},\mathcal{G})$ on $\tilde{K}\times X$ {\rm)}.
\end{definition} For short, $(\tilde{F},\mathcal{G})$ on $\tilde{K}\times X$ in Def. \ref{k-ES} is written as $(\tilde{F},\mathcal{G})$.

\begin{rem}In the sense of Def. \ref{k-ES}, $\tilde{K}$ is just a nonempty subset of $X$. However, in the classical sense, a linear cocycle $(\tilde{F},\mathcal{G})$ on $\tilde{K}\times X$ admitting a $k$-exponential separation has more restricted assumptions as follows: 

{\rm (i)} $\tilde{K}$ needs to be restricted as a nonempty compact subset of $X$;

{\rm (ii)} $\mathcal{G}(x)$ is compact for any $x\in K$. 
\end{rem}

\begin{rem}\label{k-ES-tp} Definition \ref{Def-cocyle} and \ref{k-ES} are also applicable for a more general case that the force-map $\tilde{F}$ is a map on $\tilde{K}$ such that $\tilde{F}\tilde{K}=\tilde{K}$ and the fibre-map-value map $\mathcal{G}$ is a map from $\mathcal{D}(\mathcal{G})(\supset \tilde{K})$ to $L(X)$ without the continuous assumptions on $\tilde{F}$ and $\mathcal{G}$. In this case, we have concepts spinning off from a $k$-exponential separation as follows:

{\rm (i)} $(\tilde{F},\mathcal{G})$ on $\tilde{K}\times X$ has a $k$-exponential separation type property if $\tilde{K}\times (\tilde{P}_x)$ and $\tilde{K}\times (\tilde{Q}_x)$ satisfy Def. {\rm\ref{k-ES}(i)-\ref{k-ES}(iii)} except $\tilde{K}\times (\tilde{P}_x)$ and $\tilde{K}\times (\tilde{Q}_x)$ are continuous.

{\rm (ii)} $(\tilde{F},\mathcal{G})$ on $\tilde{K}\times X$ has the unique $k$-exponential separation type property if $\tilde{K}\times (\tilde{P}_x)$ and $\tilde{K}\times (\tilde{Q}_x)$ are the unique pair of bundles such that $(\tilde{F},\mathcal{G})$ on $\tilde{K}\times X$ has a $k$-exponential separation type property. 

Here, $(\tilde{F},\mathcal{G})$ on $\tilde{K}\times X$ is also written as $(\tilde{F},\mathcal{G})$ for short.
\end{rem}

\begin{rem} In the general case the force-map $\tilde{F}$ being a map on $\tilde{K}$ such that $\tilde{F}\tilde{K}=\tilde{K}$ and the fibre-map-value map $\mathcal{G}$ being a map from $\mathcal{D}(\mathcal{G})(\supset \tilde{K})$ to $L(X)$ without the continuous assumptions on $\tilde{F}$ and $\mathcal{G}$, we have the following terminologies:

{\rm (i)} A nonempty subset $\tilde{\tilde{K}}\subset \tilde{K}$ such that $\tilde{F}\tilde{\tilde{K}}=\tilde{\tilde{K}}$ {\rm(}resp. $\tilde{F}\tilde{\tilde{K}}\subset\tilde{\tilde{K}}${\rm )} is called as invariant {\rm(}resp. positive invariant{\rm)} with respect to $\tilde{F}$. A $k$-dimensional bundle $\tilde{K}\times (\tilde{P}_x)$ {\rm (resp.} $k$-codimensional bundle $\tilde{K}\times (\tilde{Q}_x)${\rm)} is called as {\it positive invariant with respect to $(\tilde{F},\mathcal{G})$} if $G_x\tilde{P}_x\subset \tilde{P}_{\tilde{F}(x)}$ {\rm(resp.} $G_x\tilde{Q}_x\subset \tilde{Q}_{\tilde{F}(x)}${\rm)} for any $x\in \tilde{K}$; and more,  $\tilde{K}\times (\tilde{P}_x)$ {\rm(resp.} $\tilde{K}\times (\tilde{Q}_x)${\rm)} is called as invariant with respect to $(\tilde{F},\mathcal{G})$ if $G_x\tilde{P}_x=G_x\tilde{P}_{\tilde{F}(x)}\,\, ({\rm resp.} \,\, G_x\tilde{Q}_x=\tilde{Q}_{\tilde{F}(x)})$ for any $x\in \tilde{K}$. 

\vskip 3mm
{\rm (ii)} $(\tilde{F},\tilde{K})$ is called as {\it a dynamical system of $\tilde{F}$ and $\tilde{K}$}. The set $O^+_{(\tilde{F},\tilde{K})}(x)=\{\tilde{F}^i(x):\,\,i\in\mathbb{N}\}$ is called as {\it the positive semiorbit of $(\tilde{F},\tilde{K})$ with initial point $x\in\tilde{K}$.} A set $O^-_{n,(\tilde{F},\tilde{K})}(x)=\{y_i\in \tilde{K}:\,\, {\rm such\,\, that}\,\, y_0=x \,\,{\rm and}\,\, \tilde{F}(y_{i+1})=y_i\,\,{\rm for\,\, each}\,\, i\in\mathbb{N}\}$, is called as {\it a negative semiorbit of $(\tilde{F},\tilde{K})$ with initial point $x\in\tilde{K}$;} and the set $O_{f,(\tilde{F},\tilde{K})}(x)=O^+_{(\tilde{F},\tilde{K})}(x)\cup O^-_{n,(\tilde{F},\tilde{K})}(x)$ is called as {\it a full-orbit of $(\tilde{F},\tilde{K})$ with initial point $x\in\tilde{K}$.} Since $\tilde{F}\tilde{K}=\tilde{K}$, there is at least one negative semiorbit of $(\tilde{F},\tilde{K})$ with initial point $x\in\tilde{K}$ for each $x\in \tilde{K}$. When there is a unique negative semiorbit of $(\tilde{F},\tilde{K})$ with initial point $x\in \tilde{K}$, we denote it as $O^-_{(\tilde{F},\tilde{K})}(x)$; it is then clear that there is a unique full-orbit of $(\tilde{F},\tilde{K})$ with initial point $x\in \tilde{K}$ and we denote it as $O_{(\tilde{F},\tilde{K})}(x)$.

\vskip 3mm
{\rm (iii)} A nonempty set $\tilde{\tilde{K}}\subset \tilde{K}$ is called {\it minimal with respect to $(\tilde{F},\tilde{K})$} if both of $\tilde{\tilde{K}}$ and $\tilde{K}$ are closed, $\tilde{\tilde{K}}$ is invariant w.r.t. $\tilde{F}$, and there is no nonempty closed invariant proper subset of $\tilde{\tilde{K}}$ w.r.t. $\tilde{F}$.

For a positive invariant set $X_0\subset X$ w.r.t. $\tilde{F}$, $(\tilde{F},X_0)$ is called as {\it a dynamical system of $\tilde{F}$ and $X_0$}. The concepts of semiorbits, full-orbits and minimal sets for $(\tilde{F},X_0)$ are defined by the same way for the ones for $(\tilde{F},\tilde{K})$, and the corresponding descriptions are from the ones for $(\tilde{F},\tilde{K})$ by replacing $\tilde{K}$ as $X_0$.
\end{rem}

\begin{definition}\label{k-cone} A closed subset $C\subset X$ is called as {\it a cone of rank $k$ \rm{(}abbr. $k$-cone\rm{)}} if:

{\rm (i)} $\mathbb{R}\cdot C\subset C$; 

{\rm (ii)} $\max\{\text{dim}\,H: \,\,H\subset C\,\, \text{is a linear subspace} \}=k$. 
\end{definition} A $k$-cone $C$ is called as {\it solid} if its interior $\text{Int}C\neq \emptyset$, and is called as {\it k-solid} if there is a $k$-dimensional linear subspace $H$ such that $H\setminus\{0\}\subset \text{Int}C$. Moreover, it is called as {\it complemented} if there is a closed $k$-codimensional linear subspace $H^c$ such that $H^c\setminus \{0\}\subset X\setminus C$. We recall here that a {\it convex cone} $X^+$ in $X$ is a nonempty closed subset of $X$ such that (i) $\mathbb{R}^+\cdot X^+\subset X^+$, (ii) $x+y\in X^+$ for any $x,y\in X^+$, (iii) $X^+\cap (-X^+)=\{0\}$. Furthermore, $X^+\cup (-X^+)$ is a 1-cone in $X$.

Given a $k$-cone $C$. Denoted by $G_k({\rm Int} C)$ the set of $k$-dimensional linear subspaces inside ${\rm Int} C\cup \{0\}$ for $C$ being $k$-solid, i.e.,

$$G_k({\rm Int} C)=\{H\in G(k,X):\,H\setminus\{0\}\subset {\rm Int} C\}.$$ For $C$ being a complemented and $k$-solid cone, denoted by

$$G_k(C^*)=\{L\in G(k,\,X^*):\, {\rm Ker}(L)\cap C=\{0\}\}.$$ 

\begin{definition}\label{angles} Let $C$ be a complemented and $k$-solid cone. Define the following angle-numbers

{\rm (i)} $\alpha^C_0(u,v)=\inf\{\alpha\geq 0:\,\beta v-u\in {\rm Int}C\,\,\text{for all}\,\,\beta\geq \alpha\}$ for any $u,v\in{\rm Int} C$.

{\rm (ii)} $\alpha^C(u,v)=\alpha^C_0(u,v)\alpha^C_0(v,u)$ for any $u,v\in{\rm Int} C$.

{\rm (iii)} $ \alpha^C(P_1,P_2)=\sup\{\alpha^C(u,v):\,u\in P_1\setminus \{0\},\,\,v\in P_2\setminus \{0\}\}$ for any $P_1,\,P_2\in G_k({\rm Int}\,C)$.
\end{definition}

Clearly, $\alpha^C_0(u,u)=1$ and $\alpha^C(u,v)=\alpha^C(v,u)=\alpha^C(\frac{u}{\norm{u}},\frac{v}{\norm{v}})$ for any $u, v\in {\rm Int} C$. 

\begin{definition}\label{pos-opera} A bounded linear operator $T\in L(X)$ is called {\rm(}resp. strongly{\rm)} positive with respect to a cone $C_{alg}$ if {\rm(}resp. $T(C_{alg}\setminus\{0\})\subset {\rm Int} C_{alg}${\rm)} $TC_{alg}\subset C_{alg}$, where $C_{alg}$ is chosen as a convex cone $X^+$ or a $k$-cone $C$.
\end{definition}

Denoted by $\mathcal{SP}(C,\tilde{C})\subset L(X)$ the set of all linear operators such that $T(C\setminus\{0\})\subset {\rm Int} \tilde{C}$, where $C,\tilde{C}$ are two complemented and $k$-solid cones. 

\begin{definition}\label{SF-operator} Let $C,\tilde{C}$ are two complemented and $k$-solid cones. $T$ is called strongly focusing on $C$ with respect to $\tilde{C}$ if $T\in  \mathcal{SP}(C,\tilde{C})$ and 

$$\kappa=\inf\limits_{v\in TC\cap S}\{\inf\limits_{u\in X\setminus\tilde{C} }\{\norm{v-u}\}\}>0.$$ Here, $\kappa$ is called as the separation index of $T$ on $C$ with respect to $\tilde{C}$ {\rm(}for short, the separation index of $T${\rm)}.
\end{definition} Clearly, the separation index of a strongly focusing operator $T$ on $C$ with respect to $\tilde{C}$ is in $(0,1]$. Denoted by $\mathcal{SF}(C,\tilde{C})$ the set of all strongly focusing operators on $C$ w.r.t. $\tilde{C}$.

\begin{definition}\label{cone-field}Given a $k$-cone $C_x$ for each $x\in \tilde{K}$, $\tilde{K}\times (C_x)$ is called as {\it a cone field on $\tilde{K}$ of rank $k$ (for short, a cone field)}; and more, is called as {\it a positive invariant cone field with respect to $(\tilde{F},\mathcal{G})$ on $\tilde{K}\times X$} of rank $k$ (for short, an invariant cone field) if $G_x C_x\subset C_{\tilde{F}(x)}$ for any $x\in \tilde{K}$, where $\tilde{F}$ is of the general case that $\tilde{F}$ is a map such that $\tilde{F}\tilde{K}=\tilde{K}$. Here, $C_x$ is called as {\it the cone of $\tilde{K}\times (C_x)$ at $x\in \tilde{K}$}.
\end{definition}

\section{Main results}
Throughout this paper, let $K$ be a nonempty compact set of $X$ and $B_{c}(K)=\{x\in X: \inf\limits_{y\in K}\norm{x-y}\leq c\}$ for any $c\geq 0$. Let $F$ be a self-map on $X$, which is a homeomorphism on $K$ and a continuous map on $B_1(K)$. Let $\mathcal{T}$ be a continuous map from $B_1(K)$ to $L(X)$ defined by $\mathcal{T}(x)=T_x\in L(X)$ for each $x\in B_1(K)$. Then, the compactness of $K$ impllies that 

$$\sup\limits_{x\in K}\norm{T_x}_{L(X)}<+\infty.$$ Denoted by $T^0_x={\rm Id}$ and $T^n_x=T_{F^{n-1}(x)}\circ T^{n-1}_x$ for any $n\in\mathbb{N}^+$ and $x\in K$, where $F^0={\rm Id}$ and $F^n=F\circ F^{n-1},\,n\in\mathbb{N}^+$.

In the rest of the paper, we have the following setting: assume that $\mathcal{G}$ is continuous on $\mathcal{D}(\mathcal{G})$ such that $\tilde{K}\subset B_1(K)\subset \mathcal{D}(\mathcal{G})$; let ${\it h}$ be a map from $\tilde{K}$ to $K$; assume that $\tilde{F}$ is continuous on $B_1(K)$ and a homeomorphism on $\tilde{K}$; assume that 

\begin{equation}\label{Pertu-K-F}\sup\limits_{x\in \tilde{K}}\{\norm{h(x)-x}\},\,\sup\limits_{x\in B_{1}(K)}\{\norm{\tilde{F}(x)-F(x)}\}\leq \delta.\end{equation} Here, we call the map $h$ as {\it a shift-map between $\tilde{K}$ and $K$}. Let $C(B_1(K),L(X))$ consist of all continuous map on $B_1(K)$ with value in $L(X)$. The norm $\norm{\cdot}_{C_{KL(X)}}$ is defined by $\norm{\mathcal{\tilde{G}}}_{C_{KL(X)}}=\sup\limits_{x\in B_1(K)}\{\norm{\mathcal{\tilde{G}}(x)}_{L(x)}\}$ for any map $\mathcal{\tilde{G}}$ on $B_1(K)$ with value in $L(X)$. Denote

\begin{equation}\label{Pertu-fibre-map}\varepsilon=\norm{\mathcal{T}-\mathcal{G}}_{C_{KL(X)}}.\end{equation} We always assume that:

\vskip 3mm
 {\rm {\bf(H)}} The linear cocycle $(F,\mathcal{T})$ on $K\times X$ admits a $k$-exponential separation. 

\vskip 3mm
\noindent{\bf Main Theorem:} $(\tilde{F},\mathcal{G})$ on $\tilde{K}\times X$ admits a $k$-exponential separation for $\varepsilon\geq0$ and $\delta\geq0$ small enough. If $\tilde{K}$ is compact in additional, then $(\tilde{F},\mathcal{G})$ on $\tilde{K}\times X$ has the unique $k$-exponential separation for $\varepsilon\geq0$ and $\delta\geq0$ small enough. 

\vskip 5mm
Let $\tilde{F}_{nnh}$ be a map on $B_1(K)$ such that $\tilde{F}_{nnh}\tilde{K}=\tilde{K}$. Let $\tilde{\mathcal{G}}$ be a map from $B_1(K)$ to $L(X)$ defined by $\tilde{\mathcal{G}}(x)=\tilde{G}_x$ for any $x\in B_1(K)$. Denoted by $\tilde{G}^0_x={\rm Id}$ and $\tilde{G}^n_x=\tilde{G}_{\tilde{F}_{nnh}^{n-1}(x)}\circ \tilde{G}^{n-1}_x$ for any $n\in\mathbb{N}^+$ and $x\in B_1(K)$, where $\tilde{F}_{nnh}^0={\rm Id}$ and $\tilde{F}_{nnh}^n=\tilde{F}_{nnh}\circ \tilde{F}_{nnh}^{n-1},\,n\in\mathbb{N}^+$. Let $\mathcal{A}_x$ be the index set such that $\{O_{f,(\tilde{F}_{nnh},\tilde{K})}(x)_{\eta}\}_{\eta\in \mathcal{A}_x}$ consists of all different full-orbit of $(\tilde{F}_{nnh}, \tilde{K})$ with initial point $x\in\tilde{K}$. By $\tilde{F}_{nnh}\tilde{K}=\tilde{K}$, one has $\mathcal{A}_x\neq\emptyset$ for any $x\in \tilde{K}$. Suppose that 

$$\sup\limits_{x\in B_{1}(K)}\{\norm{\tilde{F}_{nnh}(x)-F(x)}\}\leq \delta\quad {\rm and}\quad \tilde{\varepsilon}=\norm{\mathcal{T}-\tilde{\mathcal{G}}}_{C_{KL(X)}}.$$ We say that $\tilde{F}_{nnh}$ possesses (${\bf H}^{\prime}$) property on a nonempty set $\tilde{\tilde{K}}\subset \tilde{K}$ if the following holds:

$\tilde{F}_{nnh}$ is of $C^1$ on $\tilde{\tilde{K}}$ such that the inverse of its derivative $D_y\tilde{F}_{nnh}$, denoted by $(D_y \tilde{F}_{nnh})^{-1}$, exists and is bounded for any $y\in \tilde{\tilde{K}}$.

\vskip 3mm
\noindent{\bf Main Corollary:} For $\tilde{\varepsilon}\geq 0$ and $\delta\geq 0$ small enough, the following hold: 

{\rm (i)} $(\tilde{F}_{nnh},\tilde{\mathcal{G}})$ on $O_{f,(\tilde{F}_{nnh},\tilde{K})}(x)_{\eta}\times X$ has a $k$-exponential separation type property for each $x\in \tilde{K}$ and $\eta\in\mathcal{A}_x$. 

{\rm (ii)} If $\tilde{F}_{nnh}$ possesses (${\bf H}^{\prime}$) property on $O_{f,(\tilde{F}_{nnh},\tilde{K})}(x)_{\eta}$ for the certain $x\in \tilde{K}$ and $\eta\in\mathcal{A}_x$, then $(\tilde{F}_{nnh},\tilde{\mathcal{G}})$ on $O_{f,(\tilde{F}_{nnh},\tilde{K})}(x)_{\eta}\times X$ adimits a $k$-exponential separation.

{\rm (iii)} If $\tilde{F}_{nnh}$ possesses (${\bf H}^{\prime}$) property on a compact minimal set $\tilde{\tilde{K}}$ w.r.t. $(\tilde{F}_{nnh},\tilde{K})$, then $(\tilde{F}_{nnh},\tilde{\mathcal{G}})$ on $\tilde{\tilde{K}}\times X$ has the unique $k$-exponential separation.

\vskip 3mm

\section{Proof of the Main Theorem}
Before this proof, we give some technical lemmas, which are partially motivated from \cite{K-L-S, L-W,Tere} and refferences therein. We prove them without the injective assumption on the operator $T\in L(X)$. Let $C$ and $\tilde{C}$ be complemented and $k$-solid cones.

\begin{lemma}\label{P-alp-0}
{\rm (i)} For any $T\in \mathcal{SP}(C,\tilde{C})$ and $u,v\in{\rm Int} C$ with $u\notin {\rm Span}\{v\}$, one has that $\alpha^{\tilde{C}}_0(Tu, Tv)<\alpha^C_0(u,v)$, and hence, $\alpha^{\tilde{C}}(Tu, Tv)<\alpha^C(u,v)$;

{\rm (ii)} For any $u,v\in{\rm Int} C$ and $r,s\in\mathbb{R}^+\setminus\{0\}$, one has $\alpha^C_0(su,rv)=\frac{s}{r}\alpha^C_0(u,v),$ and hence, $\alpha^C(u,v)=\alpha^C(\frac{u}{\norm{u}},\frac{v}{\norm{v}})$;

{\rm (iii)} $\alpha^C_0(\cdot,\cdot)$ on ${\rm Int}C\times {\rm Int}C$ is upper semicontinuous, and hence, $\alpha^C(\cdot,\cdot)$ on ${\rm Int}C\times {\rm Int}C$ is also upper semicontinuous.
\end{lemma}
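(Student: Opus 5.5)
The three parts are proved in the order (ii), (iii), (i), since (ii) is pure bookkeeping and (i) rests on the same openness of ${\rm Int}\,C$ and ${\rm Int}\,\tilde{C}$ used in (iii). Two basic facts come first. Put $A(u,v)=\{\beta\ge 0:\ \beta v-u\in{\rm Int}C\}$. Since $v\in{\rm Int}C$, ${\rm Int}C$ is open and $\mathbb{R}^+\cdot{\rm Int}C\subset{\rm Int}C$, we have $\beta v-u=\beta(v-u/\beta)\in{\rm Int}C$ for all large $\beta$. Hence $\alpha^C_0(u,v)<\infty$. Moreover, $\beta v-u\in{\rm Int}C$ for every $\beta>\alpha^C_0(u,v)$, and by closedness of $C$ also $\alpha^C_0(u,v)v-u\in C$.

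For (ii), $s,r>0$ and $\mathbb{R}^+\cdot{\rm Int}C={\rm Int}C$ give
$$\beta(rv)-su\in{\rm Int}C \iff (\beta r/s)v-u\in{\rm Int}C.$$
So the admissible set for $(su,rv)$ is $\frac{s}{r}$ times that for $(u,v)$, and $\alpha^C_0(su,rv)=\frac{s}{r}\alpha^C_0(u,v)$. Multiplying this with the same identity for the swapped pair, the factors $\frac{s}{r}$ and $\frac{r}{s}$ cancel. Taking $s=1/\norm{u}$ and $r=1/\norm{v}$ gives $\alpha^C(u,v)=\alpha^C(\frac{u}{\norm{u}},\frac{v}{\norm{v}})$.

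For (iii), fix $(u,v)$ and $a>\alpha^C_0(u,v)$, and choose $a'\in(\alpha^C_0(u,v),a)$. The goal is a neighbourhood of $(u,v)$ on which $\beta v'-u'\in{\rm Int}C$ for all $\beta\ge a'$, so that $\alpha^C_0(u',v')\le a'<a$ there. Split the range of $\beta$ as follows.
\begin{itemize}
\item \emph{Large $\beta$.} Pick $r>0$ with the ball $B(v,2r)\subset{\rm Int}C$. For $\norm{v'-v}<r$, $\norm{u'}\le\norm{u}+1$ and $\beta\ge B:=(\norm{u}+1)/r$, we get $v'-u'/\beta\in B(v,2r)$. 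Hence $\beta v'-u'\in{\rm Int}C$.
\item \emph{Compact range $\beta\in[a',B]$.} The segment $\{\beta v-u\}$ is compact and lies in the open set ${\rm Int}C$, so it has positive distance $\rho$ to $X\setminus{\rm Int}C$. Since $\norm{(\beta v'-u')-(\beta v-u)}\le B\norm{v'-v}+\norm{u'-u}$, the point $\beta v'-u'$ stays in ${\rm Int}C$ once this is $<\rho$.
\end{itemize}
Upper semicontinuity of the product $\alpha^C$ then follows, since both factors are nonnegative and upper semicontinuous.

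For (i), let $\alpha=\alpha^C_0(u,v)$. For $\beta>\alpha$ we have $\beta v-u\in{\rm Int}C\subset C\setminus\{0\}$, so $\beta Tv-Tu\in{\rm Int}\tilde{C}$. At $\beta=\alpha$, $\alpha v-u\in C$, and it is nonzero because $u\notin{\rm Span}\{v\}$. Since $T\in\mathcal{SP}(C,\tilde{C})$, this gives $\alpha Tv-Tu\in{\rm Int}\tilde{C}$. This is the only place the hypothesis on $T$ enters, and no injectivity of $T$ is needed. As ${\rm Int}\tilde{C}$ is open, there is $\epsilon>0$ with $\beta Tv-Tu\in{\rm Int}\tilde{C}$ for $\beta\in(\alpha-\epsilon,\alpha]$. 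Hence the admissible set for $(Tu,Tv)$ contains $(\alpha-\epsilon,\infty)\cap[0,\infty)$. When $\alpha>0$ this gives $\alpha^{\tilde{C}}_0(Tu,Tv)\le\max\{\alpha-\epsilon,0\}<\alpha$.

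The main obstacle is the degenerate case $\alpha=0$. For $k\ge2$ the whole plane ${\rm Span}\{u,v\}$ may lie in ${\rm Int}C\cup\{0\}$, and then $\alpha^C_0(u,v)=0$. In that case the strict inequality cannot hold as stated, and only $\alpha^{\tilde{C}}_0(Tu,Tv)=0$ survives. I would therefore first check whether the definition forces $\alpha>0$ here (for instance via the complementedness of $C$). Failing that, I would state (i) as a non-strict inequality $\le$ in general, with strictness whenever $\alpha^C_0(u,v)>0$. For the product: the same argument with $u,v$ swapped gives $\alpha^{\tilde{C}}_0(Tv,Tu)\le\alpha^C_0(v,u)$. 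So $\alpha^{\tilde{C}}(Tu,Tv)<\alpha^C(u,v)$ whenever both factors on the right are positive, one factor being strictly decreased and the other not increased. When one factor vanishes, both sides are $0$ and only equality holds.
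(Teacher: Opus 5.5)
Your proposal is correct and follows the paper's proof in all three parts. In (i) the boundary point $\alpha^C_0(u,v)v-u\in C\setminus\{0\}$ is sent into ${\rm Int}\tilde{C}$, and openness then gives room below $\alpha^C_0(u,v)$. Part (ii) is read off from the definition. In (iii) the paper merely reparametrizes by $s=1/\beta$, so the single compact segment $\{v-su:\ s\in[0,1/(\alpha^C_0(u,v)+\epsilon)]\}\subset{\rm Int}C$ covers your ``large $\beta$'' and ``compact range'' cases at once.

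You can drop the hedge about the degenerate case, because it genuinely occurs. Nothing in the definitions excludes $\alpha^C_0(u,v)=0$, and for $k\ge 2$ it is forced by $k$-solidity. Take any two independent $u,v$ in some $H\in G_k({\rm Int}C)$. Then $\beta v-u\in H\setminus\{0\}\subset{\rm Int}C$ for all $\beta\ge0$, so $\alpha^C_0(u,v)=\alpha^C(u,v)=0$. The paper itself records $\alpha^C(u,v)=0$ for such pairs in the proof of Lemma~\ref{subspace-alp}(i), and complementedness plays no role.

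Hence the strict inequalities in (i) are false as stated. The paper's proof tacitly assumes $\alpha^C_0(u,v)>0$ when it writes $v-\frac{1}{\alpha^C_0(u,v)}u$. Your amended version is the correct statement: $\le$ in general, strict when $\alpha^C_0(u,v)>0$, and for the product strict when $\alpha^C(u,v)>0$.

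The amended version is also enough where the paper uses (i), in the proof of Lemma~\ref{subspace-alp}(iii). There $\alpha^{\tilde{C}}(u,v)>1$, so the non-strict inequality already gives $\alpha^C(T^{-1}u,T^{-1}v)>1>0$. Strictness then follows.
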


\begin{proof}
{\rm (i)} By $u\notin {\rm Span}\{v\}$, ones have that $v-su\in {\rm Int} C$ for any $s\in [0,\frac{1}{\alpha^C_0(u,v)})$, and $v-\frac{1}{\alpha^C_0(u,v)} u\in \partial C\setminus\{0\}$. Then, $Tv-s Tu\in {\rm Int} \tilde{C}$ for any $s\in [0,\frac{1}{\alpha^C_0(u,v)}]$. It implies that $\alpha^{\tilde{C}}_0(Tu, Tv)<\alpha^C_0(u,v)$.

{\rm (ii)} It is dericlty implied by the definition of $\alpha^C_0$.

{\rm (iii)} For any $u,v\in{\rm Int} C$, one has that $v-su\in {\rm Int} C$ for any $s\in [0,\frac{1}{\alpha^C_0(u,v)})$. Clearly, $L_{(u,v,\epsilon)}=\{v-su: s\in [0,\frac{1}{\alpha^C_0(u,v)+\epsilon}]\}$ is compact for any $\epsilon>0$ small enough. Then, there is a $\delta_{(u,v,\epsilon)}>0$ such that $B_{\delta_{(u,v,\epsilon)}}(L_{(u,v,\epsilon)})\subset {\rm Int}\,C$, where $B_{\delta_{(u,v,\epsilon)}}(L_{(u,v,\epsilon)})=\{x\in X:\,\,\inf\limits_{y\in L_{(u,v,\epsilon)}}\norm{x-y}\leq \delta_{(u,v,\epsilon)}\}$. So, for any sequences $\{u_i\}_{i\in \mathbb{N}^+},\,\{v_i\}_{i\in\mathbb{N}^+}$ such that $\lim\limits_{i\rightarrow \infty} u_i=u$ and $\lim\limits_{i\rightarrow \infty} v_i=v$, one has that $L_{(u_i,v_i,\epsilon)}\subset B_{\delta_{(u,v,\epsilon)}}(L_{(u,v,\epsilon)})\subset {\rm Int} C$ for any sufficiently large $i$, where $L(u_i,v_i,\epsilon)=\{v_i-su_i: s\in[0, \frac{1}{\alpha_0^{C}(u,v)+\epsilon}]\}$. Thus, $\limsup\limits_{i\rightarrow \infty}\alpha^C_0(u_i,v_i)\leq \alpha^C_0(u,v)+\epsilon$ for any $\epsilon>0$ small enough. It then follows that $\limsup\limits_{i\rightarrow \infty}\alpha^C_0(u_i,v_i)\leq \alpha^C_0(u,v)$.

Therefore, we have completed the proof.
\end{proof}

\begin{lemma}\label{Jump-alp} Let $u,v\in{\rm Int}\,C$. 

{\rm (i)} If $\alpha^C(u,v)<1$, then $\alpha^C_0(u,v)=\alpha^C_0(v,u)=0$ and hence, $\alpha^C(u,v)=0$;

{\rm (ii)} If $\alpha^C(u,v)=1$, then $sv-u, su-v\in C$ for any $s\in \mathbb{R}^+$; furthermore, 

$$\{sv-u:\,s\in\mathbb{R}^+\setminus\{\alpha_0^C(u,v)\}\} \cup \{su-v:\,s\in\mathbb{R}^+\setminus\{\alpha_0^C(v,u)\}\}  \subset {\rm Int} C.$$
\end{lemma}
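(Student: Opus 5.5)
The whole argument reduces to the two–dimensional picture $\{v-tu:\,t\geq 0\}$. It uses only two facts: $C$ is closed, and by Def.~\ref{k-cone}(i), multiplication by any nonzero real number is a homeomorphism of $X$ mapping $C$ onto $C$, hence ${\rm Int}C$ onto ${\rm Int}C$. Fix $u,v\in{\rm Int}C$. Since $\beta v-u=\beta(v-\beta^{-1}u)$ for $\beta>0$, we can rewrite
$$\alpha^C_0(u,v)=\inf\{\alpha\geq 0:\ v-tu\in{\rm Int}C\ \text{for all}\ t\in(0,1/\alpha]\},$$
with the convention $1/0=+\infty$; the value $\beta=0$ is harmless because $-u\in{\rm Int}C$. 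Put $J=\{t\geq0:\,v-tu\in{\rm Int}C\}$. It is relatively open in $[0,\infty)$ and contains $0$. Let $a\in(0,+\infty]$ be the supremum of the maximal initial interval $[0,a)\subset J$. Then $\alpha^C_0(u,v)=1/a$. If $a<\infty$, then $v-au\in C\setminus{\rm Int}C$, using closedness of $C$ for membership and openness of $J$ for non-membership. Define $b$ and $\alpha^C_0(v,u)=1/b$ in the same way, from $u-sv$.

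The key translation step is this. For $s>0$ we have $u-sv=-s(v-s^{-1}u)$, so $u-sv\in{\rm Int}C$ if and only if $v-s^{-1}u\in{\rm Int}C$. Consequently $u-sv\in{\rm Int}C$ for $s\in[0,b)$ yields $v-tu\in{\rm Int}C$ for all $t\in(1/b,\infty)$. Therefore
$$[0,a)\cup(1/b,\infty)\subset J .$$
The dependent case $u=cv$ can be checked directly and needs no separate lemma. Then $v-tu=(1-tc)v$. If $c<0$, both angle-numbers are $0$. If $c>0$, we get $\alpha^C_0(u,v)=c$ and $\alpha^C_0(v,u)=1/c$, so the product is $1$ and (ii) holds trivially. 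The general argument below in fact also covers this case.

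For (i), first suppose both $a,b<\infty$, i.e.\ both angle-numbers are positive. The hypothesis $\alpha^C(u,v)=1/(ab)<1$ means $ab>1$, i.e.\ $1/b<a$. The two intervals then cover $[0,\infty)$, so $J=[0,\infty)$ and in particular $v-au\in{\rm Int}C$. This contradicts $v-au\in\partial C$, so at least one angle-number vanishes. Say $\alpha^C_0(u,v)=0$, i.e.\ $J=[0,\infty)$. The translation step read backwards gives $u-sv\in{\rm Int}C$ for every $s>0$, and for $s=0$ as well. Hence $\alpha^C_0(v,u)=0$ too, and $\alpha^C(u,v)=0$.

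For (ii), $\alpha^C(u,v)=1$ forces both angle-numbers to be positive, since otherwise the product would be $0$ by the previous paragraph. So $a,b<\infty$ and $ab=1$, i.e.\ $1/b=a$. The covering gives $v-tu\in{\rm Int}C$ for all $t\neq a$, and $v-au\in C$. Rescaling back, $sv-u=s(v-s^{-1}u)$ lies in ${\rm Int}C$ for $s>0$ with $s\neq 1/a=\alpha^C_0(u,v)$, and $-u\in{\rm Int}C$ covers $s=0$; it lies in $C$ at $s=\alpha^C_0(u,v)$. Exchanging the roles of $u$ and $v$ gives the statement for $su-v$.

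The only delicate point is the boundary claim $v-au\notin{\rm Int}C$ when $a<\infty$. It relies on the infimum in Def.~\ref{angles}(i) being exactly the endpoint of the maximal initial interval of $J$, together with the openness of ${\rm Int}C$. I would state this carefully as a preliminary observation before the case analysis.
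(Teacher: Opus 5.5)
Your proof is correct and follows essentially the same route as the paper. Using $\mathbb{R}\cdot C\subset C$, the condition defining $\alpha^C_0(v,u)$ becomes the statement that $sv-u\in{\rm Int}\,C$ on the initial interval $[0,1/\alpha^C_0(v,u))$. Together with the interval $(\alpha^C_0(u,v),+\infty)$, this either covers all of $[0,+\infty)$, forcing both angle-numbers to vanish, or, when the product equals $1$, misses exactly the point $\alpha^C_0(u,v)$, where closedness of $C$ gives membership in $C$. Your reparametrization $t=1/\beta$ and the explicit identification of $\alpha^C_0(u,v)$ with the endpoint of the maximal initial interval of $J$ are cosmetic. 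They do usefully spell out steps the paper leaves implicit, such as the ``hence $\alpha^C_0(v,u)=0$'' deduction and the case $s=0$.
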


\begin{proof} If $\alpha^C_0(v,u)>0$, then $\beta u-v\in {\rm Int}C$ for any $\beta\in (\alpha^C_0(v,u),+\infty)$ implies that $sv-u\in {\rm Int}C$ for any $s\in [0, \frac{1}{\alpha^C_0(v,u)})$. On the other hand, $sv-u\in {\rm Int}C$ for any $s\in (\alpha^C_0(u,v),+\infty)$. 

(i) Suppose that $\alpha^C_0(v,u)>0$. Since $\alpha^C(u,v)<1$, $\alpha^C_0(u,v)<\frac{1}{\alpha^C_0(v,u)}$. It then follows from $\alpha^C_0(u,v)<\frac{1}{\alpha^C_0(v,u)}$ that $sv-u\in {\rm Int}C$ for any $s\in [0,+\infty)$. Thus, $\alpha^C_0(u,v)=0$ and hence $\alpha^C_0(v,u)=0$, a contradiction. By similiar arguments, one has that $\alpha^C_0(u,v)=0$. Thus, we have proved (i).

(ii) Since $\alpha^{C}(u,v)=1$, $\alpha^C(u,v), \alpha^C(v,u)>0$ and more $\alpha^{C}(u,v)=\frac{1}{ \alpha^C(v,u)}$. Together with $C$ being a closed set such that $\mathbb{R}\cdot C\subset C$, one has that (ii) holds.

Therefore, we have completed the proof.
\end{proof}

\begin{lemma}\label{subspace-alp}  Let $P_1, P_2\in G_k({\rm Int} C)$. Then, ones have:

{\rm (i)} $\alpha^C(P_1,P_2)=1\iff P_1=P_2$;

{\rm (ii)} If $\lim\limits_{i\rightarrow +\infty}P_{1i}=P_1$ and $\lim\limits_{i\rightarrow +\infty}P_{2i}=P_2$ with $P_{1i}, P_{2i}\in G_k({\rm Int} C)$ for all $i\in\mathbb{N}^+$ and $P_1,\,P_2\in G_k({\rm Int} C)$, then 

$$\limsup\limits_{i\rightarrow +\infty}\alpha^C(P_{1i},P_{2i})\leq \alpha^C(P_1,P_2);$$

{\rm (iii)} Let $T\in \mathcal{SP}(C,\tilde{C})$. If $P_1\neq P_2$ and $TP_1\neq TP_2$, then $\alpha^{\tilde{C}}(TP_1,TP_2)<\alpha^C(P_1,P_2)$.
\end{lemma}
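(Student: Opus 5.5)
The plan is to first record how $\alpha^C$ behaves on pairs of vectors taken from subspaces in $G_k({\rm Int}\,C)$, then treat (i), (ii), (iii) in turn. Throughout, Lemma \ref{P-alp-0}(ii) lets me normalize to unit vectors, so all suprema may be taken over $(P_1\cap S)\times(P_2\cap S)$. This set is compact because $P_1,P_2$ are finite dimensional.

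\emph{Step 0 (finiteness).} For $u,v\in{\rm Int}\,C$, $\alpha^C_0(u,v)<\infty$. Indeed $\beta v-u=\beta(v-u/\beta)$, and $v-u/\beta\in{\rm Int}\,C$ for $\beta$ large because $v$ is an interior point.

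\emph{Step 1 (a single subspace).} If $u,v\in P\in G_k({\rm Int}\,C)$ are linearly independent, then $sv-u\in P\setminus\{0\}\subset{\rm Int}\,C$ for every $s\ge 0$. Hence $\alpha^C_0(u,v)=0$ and $\alpha^C(u,v)=0$. If $u=cv$ with $c>0$, a direct computation gives $\alpha^C_0(u,v)=c$ and $\alpha^C_0(v,u)=1/c$, so $\alpha^C(u,v)=1$. If $c<0$, then $\alpha^C(u,v)=0$. Taking $u=v$ shows $\alpha^C(P,P)=1$, which gives the direction ``$\Leftarrow$'' of (i).

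\emph{Step 2 (direction ``$\Rightarrow$'' of (i)).} Suppose $P_1\neq P_2$ and pick $u\in P_1\setminus P_2$. Then ${\rm Span}\{P_2,u\}$ is $(k+1)$-dimensional, so by Def.\,\ref{k-cone}(ii) it contains a vector $w\notin C$. Write $w=p-tu$ with $p\in P_2$. We have $t\neq 0$, since otherwise $w\in P_2\subset C$. Rescaling, and replacing $w$ by $-w$ if necessary (allowed because $\mathbb{R}\cdot C\subset C$), we may assume $w=v-u$ with $v\in P_2$. Then $v\neq 0$, so $v\in{\rm Int}\,C$. Because $1\cdot v-u\notin{\rm Int}\,C$, we get $\alpha^C_0(u,v)\ge 1$. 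Lemma \ref{Jump-alp}(i) then rules out $\alpha^C(u,v)<1$, since that would force $\alpha^C_0(u,v)=0$. Lemma \ref{Jump-alp}(ii) rules out $\alpha^C(u,v)=1$, since that would put $v-u\in C$. Hence $\alpha^C(P_1,P_2)\ge\alpha^C(u,v)>1$. Together with Step 1 this proves (i), and it also shows $\alpha^C(P_1,P_2)\ge 1$ always.

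\emph{Step 3 (proof of (ii)).} Choose unit vectors $u_i\in P_{1i}$, $v_i\in P_{2i}$ along a subsequence realizing the $\limsup$ up to $1/i$. This is legitimate since each sup is a max of a finite usc function on a compact set, as in Step 0 and Lemma \ref{P-alp-0}(iii). Convergence in the gap metric gives unit vectors $u_i'\in P_1$, $v_i'\in P_2$ with $\norm{u_i-u_i'},\norm{v_i-v_i'}\to 0$. By compactness of $P_1\cap S$ and $P_2\cap S$, a further subsequence has $u_i\to u\in P_1\cap S$ and $v_i\to v\in P_2\cap S$, both in ${\rm Int}\,C$. Lemma \ref{P-alp-0}(iii) then gives $\limsup\alpha^C(u_i,v_i)\le\alpha^C(u,v)\le\alpha^C(P_1,P_2)$.

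\emph{Step 4 (proof of (iii)).} Since $T(C\setminus\{0\})\subset{\rm Int}\,\tilde C$, the operator $T$ is injective on $C$. Hence $TP_j\in G_k({\rm Int}\,\tilde C)$ and every nonzero vector of $TP_j$ has the form $Tu$ with $u\in P_j\setminus\{0\}$. The function $(u,v)\mapsto\alpha^{\tilde C}(Tu,Tv)$ on $(P_1\cap S)\times(P_2\cap S)$ is finite (Step 0) and upper semicontinuous (Lemma \ref{P-alp-0}(iii)). Its supremum, which equals $\alpha^{\tilde C}(TP_1,TP_2)$, is therefore attained at some $(u,v)$. Since $TP_1\neq TP_2$, (i) gives $\alpha^{\tilde C}(Tu,Tv)>1$. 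By Step 1 (applied to $\tilde C$ when $Tu\in{\rm Span}\{Tv\}$), this forces $Tu\notin{\rm Span}\{Tv\}$, hence $u\notin{\rm Span}\{v\}$. Lemma \ref{P-alp-0}(i) then yields
$$\alpha^{\tilde C}(TP_1,TP_2)=\alpha^{\tilde C}(Tu,Tv)<\alpha^C(u,v)\le\alpha^C(P_1,P_2).$$

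I expect the main obstacle to be the strict inequality in (i) and the attainment of the maximum in (iii). Step 2 relies crucially on the rank-$k$ property of $C$ combined with the jump dichotomy of Lemma \ref{Jump-alp}, and Step 4 relies on finiteness plus upper semicontinuity, so that strictness survives passing to the supremum.
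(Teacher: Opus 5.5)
Your proposal is correct and follows essentially the paper's route. For (i) you use the single-subspace computation, the rank-$k$ property and Lemma \ref{Jump-alp}, just running the converse contrapositively. For (ii) you use compactness of the unit spheres plus upper semicontinuity; applying usc at the limit point spares you the paper's a priori bound $\max\{1,\frac{4}{\tilde{\delta}^2}\}$. For (iii) you use attainment of the supremum followed by Lemma \ref{P-alp-0}(i).

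One point is worth stating explicitly in (iii). Lemma \ref{P-alp-0}(i) really requires $\alpha^C_0(u,v),\alpha^C_0(v,u)>0$. This holds at your maximizer because $\alpha^{\tilde C}_0(Tu,Tv)\leq\alpha^C_0(u,v)$ always (as $T$ maps ${\rm Int}\,C$ into ${\rm Int}\,\tilde C$), so $\alpha^C(u,v)\geq\alpha^{\tilde C}(Tu,Tv)>1$.
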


\begin{proof} (i)  It is clear that $P_1=P_2$ implies for any $u,v\in P_1$, 

\begin{equation}\nonumber\alpha^C(u,v)=\begin{cases}0,\,\,&\,\,{\rm if}\,\,u\notin {\rm Span}\{v\}\setminus\{0\}\\ 
1,\,\,&\,\,{\rm if}\,\,u\in\mathbb{R}^+\cdot\{v\}\setminus\{0\}\\ 
0,\,\,&\,\,{\rm if}\,\,u\in\mathbb{R}^-\cdot\{v\}\setminus\{0\}\end{cases}\end{equation} Thus, $\alpha^C(P_1, P_2)=1$.

We prove by contrary that $\alpha^C(P_1, P_2)=1$ implies $P_1=P_2$. Suppose that there is a $u\in P_2\setminus P_1$. Let $\{v_1,v_2,\cdots,v_k\}$ be a basis of $P_1$. By $\alpha^C(\Sigma_{i=1}^k a_iv_i, u)\leq 1$ for any nonzero $k$-tuple $(a_1,a_2,\cdots,a_k)\in \mathbb{R}^k$, it then follows from Lemma \ref{Jump-alp} that ${\rm Span}\{v_1,v_2,\cdots,v_k,u\}\subset  C$, a contradiction. Thus, $P_2\subset P_1$.  Together with both $P_1$ and $P_2$ being $k$ dimensional, one has $P_1=P_2$.

(ii) $P_1,\,P_2\in G_k({\rm Int} C)$ implies that there is a $\tilde{\delta}>0$ such that $P_1\cap S+B_{\tilde{\delta}}(0),\,P_2\cap S+B_{\tilde{\delta}}(0)\subset {\rm Int} C$, where $B_{\tilde{\delta}}(0)=\{x\in X:\,\norm{x}\leq\delta\}$. $\lim\limits_{i\rightarrow +\infty}P_{1i}=P_1$ and $\lim\limits_{i\rightarrow +\infty}P_{2i}=P_2$ yield that $P_{1i}\cap S\subset P_1\cap S+B_{\frac{\tilde{\delta}}{2}}(0)$ and $P_{2i}\cap S\subset P_2\cap S+B_{\frac{\tilde{\delta}}{2}}(0)$ for any sufficiently large $i\in \mathbb{N}^+$. Note that $\alpha^C(u,v)=\alpha^C(v,u)=\alpha^C(\frac{u}{\norm{u}},\frac{v}{\norm{v}})$ for any $u,v\in{\rm Int} C$. Together with {\rm (i)} and Lemma \ref{Jump-alp},

$$\alpha^C(\tilde{P}_1, \tilde{P}_2)=\max\{1,\,\,\sup\{\alpha^C(u,v):\,u\in \tilde{P}_1\cap S,\,\,v\in \tilde{P}_2\cap S\,\,\text{such that} \,\,u\notin {\rm Span}\{v\}\}\}$$  for any $\tilde{P}_1,\tilde{P}_2\in G_k({\rm Int} C)$. Then, $\max\{1,\frac{4}{\tilde{\delta}^2}\}$ is an upper bound of $\{\alpha^C(P_{1i},P_{2i})\}_{i=N}^{+\infty}$ for some positive integer $N$. So, $\limsup\limits_{i\rightarrow +\infty}\alpha^C(P_{1i}, P_{2i})$ exists.

For any $\epsilon>0$ small enough, take $u^{\epsilon}_i\in P_{1i}\cap S$ and $v^{\epsilon}_i\in P_{2i}\cap S$ such that $\alpha^C(u^{\epsilon}_{i},v^{\epsilon}_i)\geq \alpha^C(P_{1i}, P_{2i})-\epsilon$. Clearly, $\alpha^C(u^{\epsilon}_{i},v^{\epsilon}_i)\leq \max\{1,\frac{4}{\tilde{\delta}^2}\}$ for any $i\geq N$. Recall that $P_1,\,P_2$ are $k$-dimensional. One has that $P_1\cap S$ and $P_2\cap S$ are compact. It then follows that there is a subsquence $\{i_m\}_{m=1}^{+\infty}$ such that 

$$\lim\limits_{m\rightarrow +\infty}u^{\epsilon}_{i_m}=u^{\epsilon}\in P_1\cap S,\,\, \lim\limits_{m\rightarrow +\infty}v^{\epsilon}_{i_m}=v^{\epsilon}\in P_2\cap S,$$ and

$$\begin{aligned}&\lim\sup\limits_{i\rightarrow +\infty}\alpha^C(P_{1i}, P_{2i})-\epsilon\leq \lim\sup\limits_{i\rightarrow +\infty}\alpha^C(u^{\epsilon}_{i},v^{\epsilon}_i)=\lim\limits_{m\rightarrow +\infty}\alpha^C(u^{\epsilon}_{i_m},v^{\epsilon}_{i_m})\\
&\overset{Lemma\, \ref{P-alp-0}(iii)}{\leq} \alpha^C(u^{\epsilon},v^{\epsilon}) \leq \alpha^C(P_1, P_2).\end{aligned}$$ Together with the arbitrariness of $\epsilon$, one has 

$$\lim\sup\limits_{i\rightarrow +\infty}\alpha^C(P_{1i}, P_{2i})\leq \alpha^C(P_1, P_2).$$

(iii) $T\in \mathcal{SP}(C,\tilde{C})$ implies that $T\mid_{\tilde{P}}$ is injective for any $\tilde{P}\in G_k(C)$, and hence, $T P_1,T P_2  \in G_{k}({\rm Int} C)$. By utilizing (i) and Lemma \ref{Jump-alp}, $P_1\neq P_2$ and $T P_1\neq T P_2$ imply that 

$$\alpha^{C}(P_1, P_2), \alpha^{C}(T P_1, T P_2)>1.$$

We assert that there exist $u\in T P_1\cap S$ and $v\in (T P_2\setminus T P_1)\cap S$ such that $\alpha^{\tilde{C}}(u,v)=\alpha^{\tilde{C}}(T P_1,T P_2)$. Clearly, for any $u,v\in{\rm Int}\tilde{C}$, $\alpha^{\tilde{C}}(u,v)=1$ if $u\in\mathbb{R}^+\cdot\{v\}\setminus\{0\}$, and $\alpha^{\tilde{C}}(u,v)=0$ if $u\in\mathbb{R}^-\cdot\{v\}\setminus\{0\}$. By the definition of $\alpha^{\tilde{C}}(\tilde{P}_1, \tilde{P}_2)$ for any $\tilde{P}_1, \tilde{P}_2\in G_k({\rm Int} \tilde{C})$, there exists a sequence $u_n\in T P_1\cap S$ and $v_n\in T P_2\cap S$ such that $u_n\notin {\rm Span}\{v_n\}$ and $\alpha^{\tilde{C}}(u_n,v_n)\in (\alpha^{\tilde{C}}(T P_1,T P_2)-\frac{\varepsilon}{n}, \alpha^{\tilde{C}}(T P_1,T P_2)]$, where $\varepsilon>0$ small enough such that $\alpha^{\tilde{C}}(T P_1,T P_2)-\varepsilon>1$. By the comapctness of $T P_1\cap S$ and $T P_2\cap S$, there are subsequences, also denoted by $u_n, v_n$, such that 

$$\lim\limits_{n\rightarrow +\infty}u_n=u\in T P_1\cap S, \lim\limits_{n\rightarrow +\infty}v_n=v\in T P_2\cap S$$ and

$$1<\lim\limits_{n\rightarrow +\infty}\alpha^{\tilde{C}}(u_n,v_n)=\alpha^{\tilde{C}}(T P_1,T P_2)\overset{Lemma \ref{P-alp-0}(iii)}{\leq}\alpha^{\tilde{C}}(u,v).$$ Clearly, $u\notin {\rm Span}\{v\}$. Together with $\alpha^{\tilde{C}}(u,v)\leq \alpha^{\tilde{C}}(T P_1,T P_2)$, one has that $\alpha^{\tilde{C}}(u,v)=\alpha^{\tilde{C}}(T P_1,T P_2)$. Of course, $u\in T P_1\cap S$ and $v\in (T P_2\setminus T P_1)\cap S$. (Otherwise, $\alpha^{\tilde{C}}(u,v)=0$ for the case $u\notin {\rm Span}\{v\}$ with $u,v\in T P_1$.) Thus, we proved the assertion.

By utilizing the injectivity of $T\mid_{\tilde{P}}$ for any $\tilde{P}\in G_k(C)$, there are unique points $T^{-1}u\in P_1$ and $T^{-1}v\in P_2$ such that $T(T^{-1}u)=u$ and $T(T^{-1}v)=v$. Clearly, $T^{-1}u\notin {\rm Span}\{T^{-1}v\}$. Togther with Lemma \ref{P-alp-0}, one has that $\alpha^{\tilde{C}}(u,v)<\alpha^C(T^{-1}u,T^{-1}v)$. Thus, $\alpha^{\tilde{C}}(T P_1,T P_2)<\alpha^C(T^{-1}u,T^{-1}v)\leq \alpha^C(P_1, P_2)$.

Therefore, we have completed the proof.
\end{proof}

\begin{lemma}\label{SF-angles} Assume that $T\in \mathcal{SF}(C,\tilde{C})$ with the separation index $\kappa \in(0,1]$. 

\noindent{\rm (i)} If $\kappa=1$, then $TC=\tilde{P}\subset {\rm Int} \tilde{C}\cup\{0\}$ is a $k$-dimesional linear subspace;

\noindent{\rm (ii)} If $\kappa\in(0,1)$, then for any $P_1, P_2\in G_k({\rm Int}C)$,
 
\begin{equation}\label{k-angle-constraction}{\rm In}(\alpha^{\tilde{C}}(T P_1, T P_2))\leq\frac{1-\kappa}{1+\kappa}\cdot{\rm In}(\alpha^C(P_1,P_2));\end{equation}

\noindent{\rm (iii)} Let $\tilde{\kappa}\in(0,\kappa)$ and $\tilde{\tilde{C}}=\overline{\mathbb{R}\cdot (TC\cap S+B_{\tilde{\kappa}}(0))}$. Then, $\tilde{\tilde{C}}\subset {\rm Int}\tilde{C}\cup\{0\}$ is a complemented and $k$-solid cone, and $T\in \mathcal{SF}(C,\tilde{\tilde{C}})$ with the separation index $\tilde{\kappa}$.

\noindent{\rm (iv)} For any $P_1, P_2\in G_k({\rm Int}C)$,

\begin{equation}\label{gapmetric-angle-relation}\begin{aligned} &d(TP_1,TP_2)\leq \min\{\frac{ 2(1+\kappa)(1+\tilde{\kappa})}{(\kappa-\tilde{\kappa})^2}\cdot(\alpha^{\tilde{\tilde{C}}}(TP_1,TP_2)-1),\,2\}\\
&\quad\quad\quad\quad\leq \min\{(2+\frac{ 2(1+\kappa)(1+\tilde{\kappa})}{(\kappa-\tilde{\kappa})^2})\cdot{\rm In}(\alpha^{\tilde{\tilde{C}}}(TP_1,TP_2)),\,2\}.\end{aligned}\end{equation}
\end{lemma}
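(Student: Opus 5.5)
The plan is to derive everything from one geometric consequence of Definition \ref{SF-operator}. For every $w\in TC\cap S$ and every $\rho<\kappa$, the ball $w+B_{\rho}(0)$ lies in ${\rm Int}\tilde{C}$. Scaling with $\mathbb{R}\cdot\tilde{C}\subset\tilde{C}$, every nonzero $z\in TC$ then satisfies $z+B_{\rho\norm{z}}(0)\subset{\rm Int}\tilde{C}$. We also use throughout that $T$ is injective on every $P\in G_k({\rm Int}C)$, as in the proof of Lemma \ref{subspace-alp}(iii), so $TP\in G_k({\rm Int}\tilde{C})$.

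\textbf{Part (i).} Fix $P\in G_k({\rm Int}C)$ and set $\tilde{P}=TP$. Suppose some $w\in TC\cap S$ lies outside $\tilde{P}$.

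\begin{itemize}
\item Since $\tilde{C}$ is a $k$-cone, the $(k+1)$-dimensional space ${\rm Span}\{\tilde{P},w\}$ contains a unit vector $z\notin\tilde{C}$.
\item When $\kappa=1$, every unit vector of $TC$, in particular every unit vector of $\tilde{P}$ and $\pm w$, is at distance at least $1$ from $z$ and from all of $\mathbb{R}z$.
\item In the finite-dimensional space ${\rm Span}\{\tilde{P},w\}$ I would derive a contradiction from this "Birkhoff orthogonality" of all these unit vectors to a common direction. I would try a compactness argument on $\mathbb{R}z$ together with a convexity argument on the unit sphere. This is the step I am least sure of, and I expect it to need the full $k$-cone structure, not just one bad direction.
\end{itemize}

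It then follows that $TC\subset\tilde{P}$. Conversely $\tilde{P}=TP\subset TC$, so $TC=\tilde{P}$.

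\textbf{Part (ii).} This is a Birkhoff--Hopf type contraction, and I would do it pointwise first. Take non-collinear $u,v\in P_1\cup P_2$ with $\alpha^C(u,v)>1$; the case $\alpha^C\le 1$ is handled by Lemma \ref{Jump-alp}. Write $a=\alpha^C_0(u,v)$ and $b=\alpha^C_0(v,u)$.

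\begin{itemize}
\item By Lemma \ref{Jump-alp}(ii), $sv-u\in C$ for every $s\ge a$ and for every $s\le 1/b$. Applying $T$ gives $sTv-Tu\in TC$ for these $s$.
\item The ball property then widens the admissible $s$-intervals. If $sTv-Tu\in TC$, then $s'Tv-Tu\in{\rm Int}\tilde{C}$ whenever $|s'-s|\,\norm{Tv}<\kappa\norm{sTv-Tu}$.
\item I would lower-bound $\norm{sTv-Tu}$ at the two endpoints $s=a$ and $s=1/b$ in terms of $\norm{Tv}$, $\norm{Tu}$ and $s$. Each endpoint vector lies in $TC$, so its norm is comparable with $\norm{aTv}+\norm{Tu}$ up to factors $1\pm\kappa$.
\item This shrinks the product of the two thresholds to a cross-ratio bound $\alpha^{\tilde{C}}(Tu,Tv)\le \Phi(\alpha^C(u,v))$. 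An elementary calculus step then shows $\ln\Phi(t)\le\frac{1-\kappa}{1+\kappa}\ln t$.
\end{itemize}

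Taking the supremum over $u\in P_1$, $v\in P_2$ gives \eqref{k-angle-constraction}. I expect the cross-ratio algebra here to be the main computational obstacle, specifically keeping the constant exactly $\frac{1-\kappa}{1+\kappa}$.

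\textbf{Part (iii).} Set $\tilde{\tilde{C}}=\overline{\mathbb{R}\cdot(TC\cap S+B_{\tilde\kappa}(0))}$ and check the properties in turn.

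\begin{itemize}
\item It is closed and satisfies $\mathbb{R}\cdot\tilde{\tilde{C}}\subset\tilde{\tilde{C}}$.
\item Because $\tilde\kappa<\kappa$, the key ball fact gives $\tilde{\tilde{C}}\subset{\rm Int}\tilde{C}\cup\{0\}$. Hence $\tilde{\tilde{C}}$ has rank at most $k$, and it is complemented by the same $H^c$ that complements $\tilde{C}$.
\item For $P\in G_k({\rm Int}C)$, the set $TP\cap S+B_{\tilde\kappa}(0)$ shows that $TP\setminus\{0\}\subset{\rm Int}\tilde{\tilde{C}}$. So $\tilde{\tilde{C}}$ is $k$-solid, of rank exactly $k$, and $T\in\mathcal{SP}(C,\tilde{\tilde{C}})$.
\item The separation index is at least $\tilde\kappa$ by construction. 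It equals $\tilde\kappa$ because points at distance slightly more than $\tilde\kappa$ from $TC\cap S$, in directions transverse to the ray, lie outside the closure after normalisation.
\end{itemize}

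\textbf{Part (iv).} The plan is to bound the gap metric by the angle-number.

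\begin{itemize}
\item Take $u\in TP_1\cap S$ and write $\alpha=\alpha^{\tilde{\tilde{C}}}(TP_1,TP_2)$. Choose $v\in TP_2$, normalised so that $v-u$ and $\beta u-v$ lie in $\tilde{\tilde{C}}$ with $\beta\le\alpha$; this uses the two thresholds.
\item Both $u$ and $v$ carry a margin $\kappa-\tilde\kappa$ inside $\tilde{C}$ relative to $\tilde{\tilde{C}}$. Using this, a vector lying in $\tilde{\tilde{C}}$ and sandwiched as above must have norm at most $\frac{(1+\kappa)(1+\tilde\kappa)}{(\kappa-\tilde\kappa)^2}(\alpha-1)$. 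This is the same ball-widening argument as in (ii), applied twice.
\item Normalising $v$ costs at most a factor $2$, which gives the first inequality. The cap $2$ is trivial since $d\le 2$.
\item For the second inequality, use $\alpha-1\le\ln\alpha+(\alpha-1)^2/\ldots$. When $\alpha-1$ is small this gives $\alpha-1\le 2\ln\alpha$. Otherwise the minimum with $2$ already dominates, which is where the additive constant $2$ in the coefficient is absorbed.
\end{itemize}
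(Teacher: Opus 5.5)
\textbf{Part (ii) is the main gap.} The $\kappa$-ball fact is the right input, but you use it only locally. Widening the admissible $s$-interval around the endpoint vectors $aTv-Tu$ and $\frac1b Tv-Tu$ cannot give the exponent $\frac{1-\kappa}{1+\kappa}$. Also, your claim that $\norm{aTv-Tu}$ is comparable to $\norm{aTv}+\norm{Tu}$ up to factors $1\pm\kappa$ is false.

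Here is a counterexample. Take Euclidean $\mathbb{R}^2$, $k=1$, $C=\tilde C=\{\abs{y}\le\abs{x}\}$, and $T=\mathrm{diag}(1,\lambda)$ with small $\lambda>0$, so $\kappa=\frac{1-\lambda}{\sqrt{2(1+\lambda^2)}}$. Let $u=(1,0)$ and $v=(1,t)$.
\begin{itemize}
\item Then $a=\frac1{1-t}$, $\frac1b=\frac1{1+t}$ and $\alpha^C(u,v)=\frac{1+t}{1-t}$.
\item Both endpoint vectors have norm about $t$, while $\norm{Tu},\norm{Tv}\approx1$.
\item Each threshold therefore moves by only about $\kappa t$. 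Widening at other points of the line $s\mapsto sTv-Tu$ does no better.
\item So the best your scheme gives is roughly $\alpha^{\tilde C}(Tu,Tv)\le 1+2(1-\kappa)t$, i.e.\ exponent about $1-\kappa>\frac{1-\kappa}{1+\kappa}$. The true value is about $1+2\lambda t$.
\end{itemize}
In general, $\norm{aTv-Tu}+\norm{\frac1bTv-Tu}\ge(a-\frac1b)\norm{Tv}$ only guarantees that the gap $a-\frac1b$ shrinks by the factor $1-\kappa$. Local widening never sees the global shape of $TC$.

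The missing idea is the Birkhoff--Hopf mechanism the paper uses:
\begin{itemize}
\item From the $\kappa$-balls, derive the diameter bound $\alpha^{\tilde C}(Tx,Ty)\le\kappa^{-2}$ for all $x,y\in C\setminus\{0\}$.
\item Apply it to the images of the boundary vectors $w_1=\frac{1}{\alpha^C_0(u,v)}u-v$ and $w_2=v-\alpha^C_0(v,u)u$.
\item Transfer the thresholds of $(Tw_1,Tw_2)$ to those of $(Tu,Tv)$ via the linear-fractional reparametrisation $sTw_1-Tw_2=(s+1)[f(s)Tu-Tv]$.
\end{itemize}
This gives $\alpha^{\tilde C}(Tu,Tv)\le\alpha^C(u,v)^{(\sqrt\beta-1)/(\sqrt\beta+1)}$ with $\beta=\alpha^{\tilde C}(Tw_1,Tw_2)\le\kappa^{-2}$, which is exactly the factor $\frac{1-\kappa}{1+\kappa}$.

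\textbf{Part (i).} The step you flag is genuinely missing, and the compactness/convexity route cannot work. In a general Banach space a single line can be at distance $\ge1$ from a whole arc of unit vectors: in $(\mathbb{R}^2,\norm{\cdot}_\infty)$, $\norm{(1,c)-s(0,1)}_\infty\ge1$ for all $\abs{c}\le1$ and $s\in\mathbb{R}$. Use the specific vectors instead. Write $z=ae+bw$ with $e\in\tilde P\cap S$. If $\abs{b}<\abs{a}$ then $\norm{z/a-e}<1$; if $\abs{a}<\abs{b}$ then $\norm{z/b-w}<1$. Either way $z\in\tilde C$ because $\kappa=1$, and the case $\abs{a}=\abs{b}$ follows from closedness of $\tilde C$. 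This is the paper's argument in disguise: the diameter bound $\kappa^{-2}=1$ together with Lemma \ref{Jump-alp} puts $\mathrm{Span}\{\tilde P,Tx\}$ inside $\tilde C$.

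\textbf{Part (iii).} Your reason for the separation index being exactly $\tilde\kappa$ is false. If $TC=\mathbb{R}u_0$ in Euclidean $\mathbb{R}^2$, then $\tilde{\tilde C}$ is the double cone of half-angle $\arcsin\tilde\kappa$. The point $u_0+(\tilde\kappa+\epsilon)e$ with $e\perp u_0$ lies inside it for small $\epsilon$, since $\arctan(\tilde\kappa+\epsilon)<\arcsin\tilde\kappa$. What you need is a point of $\partial\tilde{\tilde C}$ lying in $\overline{TC\cap S+B_{\tilde\kappa}(0)}$. The paper obtains one from complementedness: move from $TC$ towards $H^c$ until leaving $\tilde{\tilde C}$, then rescale.

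\textbf{Part (iv).} The sandwich $v-u,\ \beta u-v\in\tilde{\tilde C}$ bounds $\norm{v-u}$ only for convex normal cones. For a non-convex $k$-cone with $k\ge2$ and a general $v\in TP_2$, the plane $\mathrm{Span}\{u,v\}$ may lie entirely in $\tilde{\tilde C}$ (e.g.\ $v\in TP_1\cap TP_2$). Then $\alpha^{\tilde{\tilde C}}(u,v)\le1$ even though $u,v$ are far apart. Both the choice of $v$ and the relevant margin must change:
\begin{itemize}
\item Decompose $u=u_{tp2}+u_l$ along $X=TP_2\oplus L$, where $L\setminus\{0\}\subset X\setminus\tilde C$.
\item Use that unit vectors of $L$ are at distance at least $\frac{\kappa-\tilde\kappa}{1+\kappa}$ from $\tilde{\tilde C}$.
\item Then $\beta u-u_{tp2}=u_l+(\beta-1)u\notin\tilde{\tilde C}$ whenever $\abs{\beta-1}$ is small compared with $\norm{u_l}$, which forces $\alpha^{\tilde{\tilde C}}(u,u_{tp2})-1\gtrsim\norm{u_l}$.
\end{itemize}
Your calculus for the second inequality in (iv) and the remaining claims in (iii) are fine.
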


\begin{proof} Firstly, we assert that 

\begin{equation}\label{angle-upper-bound}\sup\limits_{u,v\in C\setminus\{0\} }\{\alpha^{\tilde{C}}(Tu,Tv)\}\leq\frac{1}{\kappa^2}.\end{equation}

 Clearly, $\frac{Tu}{\norm{Tu}}+{\rm Int }B_{\kappa}(0)\subset {\rm Int }\tilde{C}$ and $\frac{Tv}{\norm{Tv}}+{\rm Int}B_{\kappa}\subset {\rm Int }\tilde{C}$ for any $u,v\in C\setminus\{0\}$, where $B_{\kappa}(0)=\{y\in X:\norm{y}\leq \kappa\}$. Thus, $\alpha^{\tilde{C}}_0(\frac{Tu}{\norm{Tu}}, \frac{Tv}{\norm{Tv}})\leq\frac{1}{\kappa}$ and $\alpha^{\tilde{C}}_0(\frac{Tv}{\norm{Tv}}, \frac{Tu}{\norm{Tu}})\leq\frac{1}{\kappa}$. It then follows that $\alpha^{\tilde{C}}(\frac{Tv}{\norm{Tv}}, \frac{Tu}{\norm{Tu}})=\alpha^{\tilde{C}}(Tu,Tv)\leq \frac{1}{\kappa^2}$. Therefore, (\ref{angle-upper-bound}) holds.

\vskip 3mm
{\it The proof of (i).} Since $C$ is $k$-solid, there is a $P\in G_k({\rm Int} C)$. Together with $T\in \mathcal{SF}(C,\tilde{C})$, one has that $T P=\tilde{P}\in G_k({\rm Int}\tilde{C})$. Suppose that the basis of $\tilde{P}$ is $\tilde{e}_1,\tilde{e}_2,\cdots,\tilde{e}_k$. Combining with $\kappa=1$ and (\ref{angle-upper-bound}), $\alpha^{\tilde{C}}(Tu,\sum\limits_{i=0}^{k}l_i\tilde{e}_i)\leq 1$ for any $u\in C\setminus\{0\}$, where $(l_1,l_2,\cdots,l_k)$ is a nontrivial $k$-tuple. Note that $\tilde{C}$ is of rank $k$. It then follows from Lemma \ref{Jump-alp} that $Tu\in \tilde{P}$ for any $u\in C\setminus\{0\}$, and hence $TC=\tilde{P}$. Therefore, we have completed the proof of (i).

\vskip 3mm
{\it The proof of (ii).} When $T P_1=T P_2$, it then follows from Lemma \ref{subspace-alp}(i) that $\alpha^{\tilde{C}}(TP_1,TP_2)=1$. Hence, ${\rm In}(\alpha^{\tilde{C}}(TP_1,TP_2))=0$ and (\ref{k-angle-constraction}) holds. In the following, we only consider the case $P_1,P_2\in G_k({\rm Int} C)$ such that $P_1\neq P_2$ and $TP_1\neq TP_2$.

We assert that ${\rm In}(\alpha^{\tilde{C}}(Tu, Tv))<\frac{1-\kappa}{1+\kappa}\cdot{\rm In}(\alpha^C(u,v))$ for any $u,v\in {\rm Int} C$ such that $\alpha^{C}(u,v)>1$ and $\alpha^{\tilde{C}}(Tu,Tv)>1$. Let $w_1=\frac{1}{\alpha_0^{C}(u,v)}u-v$ and $w_2=v-\alpha_0^{C}(v,u)u$. Obviously, $T w_1, T w_2\neq 0$. (Otherwise, $T\in\mathcal{SF}(C,\tilde{C})$ implies $Tv\in{\rm Span}\{Tu\}\setminus\{0\}$, and hence, $\alpha^{\tilde{C}}(Tu,Tv)\leq1$, a contradiction.) Moreover, $Tw_1, Tw_2\in {\rm Int} \tilde{C}$ because of $T\in\mathcal{SF}(C,\tilde{C})$. 

Let $sTw_1-T w_2=(s+1)\cdot[f(s)Tu-Tv]\,\,{\rm with}\,\,f(s)=\frac{1}{\alpha^{C}_0(u,v)}+\frac{1}{s+1}\cdot[\alpha_0^{C}(v,u)-\frac{1}{\alpha^{C}_0(u,v)}]$ for any $s\in[0,+\infty)$. $\alpha^{C}(u,v)>1$ yields that $\alpha_0^{C}(v,u)-\frac{1}{\alpha^{C}_0(u,v)}>0$. It then follows that $f^{\prime}(s)<0$ for any $s\geq 0$, and $f(0)=\alpha_0^{C}(v,u)$, $f(+\infty)=\frac{1}{\alpha_0^{C}(u,v)}$. Note that $sTu-Tv\in {\rm Int}\tilde{C}$ for any $s\in[0,\frac{1}{\alpha_0^{C}(u,v)}]\cup [\alpha_0^{C}(v,u),+\infty)$. By Lemma \ref{Jump-alp} and $\alpha^{\tilde{C}}(Tu,Tv)>1$, one has that $\alpha^{\tilde{C}}_0(Tv,\,Tu)>0$ and then $\alpha^{\tilde{C}}_0(Tw_2,Tw_1)>0$. By Lemma \ref{Jump-alp}{\rm (i)}, $\alpha^{\tilde{C}}_0(Tw_2,Tw_1)\geq 1$. Together with Lemma \ref{Jump-alp}{\rm (ii)}, $\alpha^{\tilde{C}}(Tw_1,Tw_2)=1$ implies $\alpha^{\tilde{C}}(Tu,Tv)=1$. It then follows from $\alpha^{\tilde{C}}(Tu,Tv)>1$ that $\alpha^{\tilde{C}}(Tw_1,Tw_2)>1$. Furthermore, ones have

$$\begin{aligned}\alpha^{\tilde{C}}_0(Tu,Tv)&\leq\frac{1}{\frac{\alpha_0^{C}(v,u)+\frac{\alpha^{\tilde{C}}_0(Tw_2,Tw_1)}{\alpha_0^{C}(u,v)}}{1+\alpha^{\tilde{C}}_0(Tw_2,Tw_1)}}=\frac{1+\alpha^{\tilde{C}}_0(Tw_2,Tw_1)}{\alpha_0^{C}(v,u)+\frac{\alpha^{\tilde{C}}_0(Tw_2,Tw_1)}{\alpha_0^{C}(u,v)}}\\
&=\frac{\alpha_0^{C}(u,v)+\alpha^{C}_0(u,v)\cdot\alpha^{\tilde{C}}_0(Tw_2,Tw_1)}{\alpha^{C}(u,v)+\alpha^{\tilde{C}}_0(Tw_2,Tw_1)  }.\end{aligned}$$ By the similiar manner, we have that 

$$\begin{aligned}\alpha_0^{\tilde{C}}(Tv,Tu)\leq \frac{\alpha^{\tilde{C}}_0(Tw_1,Tw_2)\alpha^{C}_0(v,u)+\frac{1}{\alpha^{C}_0(u,v)}} {1+ \alpha^{\tilde{C}}_0(Tw_1,Tw_2)}=\frac{\alpha^{\tilde{C}}_0(Tw_1,Tw_2)\alpha^{C}(u,v)+1}{\alpha^{C}_0(u,v)+\alpha^{C}_0(u,v)\cdot\alpha^{\tilde{C}}_0(Tw_1,Tw_2)}.\end{aligned}$$ Then, one has that 

$$\begin{aligned}&\alpha^{\tilde{C}}(Tu,Tv)\leq \frac{(1+\alpha^{\tilde{C}}_0(Tw_2,Tw_1))\cdot(\alpha^{\tilde{C}}_0(Tw_1,Tw_2)\alpha^{C}(u,v)+1)} {(1+\alpha^{\tilde{C}}_0(Tw_1,Tw_2) )\cdot(\alpha^{C}(u,v)+\alpha^{\tilde{C}}_0(Tw_2,Tw_1))}\\
=&\frac{ (\sqrt{\alpha^{\tilde{C}}(Tw_1,Tw_2)\alpha^C(u,v)}+1)^2+\alpha^{\tilde{C}}_0(Tw_2,Tw_1)+ \alpha^{\tilde{C}}_0(Tw_1,Tw_2)\alpha^C(u,v)-2\sqrt{\alpha^{\tilde{C}}(Tw_1,Tw_2)\alpha^C(u,v)}     }{ (\sqrt{\alpha^{\tilde{C}}(Tw_1,Tw_2)}+\sqrt{\alpha^C(u,v)})^2  +\alpha^{\tilde{C}}_0(Tw_2,Tw_1)+ \alpha^{\tilde{C}}_0(Tw_1,Tw_2)\alpha^C(u,v)-2\sqrt{\alpha^{\tilde{C}}(Tw_1,Tw_2)\alpha^C(u,v)}         }  \\
\leq& \frac{ (\sqrt{\alpha^{\tilde{C}}(Tw_1,Tw_2)\alpha^C(u,v)}+1)^2}{(\sqrt{\alpha^{\tilde{C}}(Tw_1,Tw_2)}+\sqrt{\alpha^C(u,v)})^2}\leq\frac{((\sqrt{\alpha^{C}(u,v)})^{\frac{\sqrt{\alpha^{\tilde{C}}(Tw_1,Tw_2)}-1}{1+\sqrt{\alpha^{\tilde{C}}(Tw_1,Tw_2)}}}\cdot(\sqrt{\alpha^{\tilde{C}}(Tw_1,Tw_2)}+\sqrt{\alpha^C(u,v)}))^2}{(\sqrt{\alpha^{\tilde{C}}(Tw_1,Tw_2)}+\sqrt{\alpha^C(u,v)})^2}\\
=&\alpha^{C}(u,v)^{\frac{\sqrt{\alpha^{\tilde{C}}(Tw_1,Tw_2)}-1}{1+\sqrt{\alpha^{\tilde{C}}(Tw_1,Tw_2)}}}.
\end{aligned}$$ Hence, ${\rm In}(\alpha^{\tilde{C}}(Tu,Tv))\leq \frac{\sqrt{\alpha^{\tilde{C}}(Tw_1,Tw_2)}-1}{1+\sqrt{\alpha^{\tilde{C}}(Tw_1,Tw_2)}} {\rm In}( \alpha^{C}(u,v)).$ By virtue of (\ref{angle-upper-bound}), $\alpha^{\tilde{C}}(Tw_1,Tw_2)\leq \frac{1}{\kappa^2}.$ Then, 

\begin{equation}\label{k-angle-constraction-vector}{\rm In}(\alpha^{\tilde{C}}(Tu,Tv))\leq \frac{1-\kappa}{1+\kappa}\cdot {\rm In}( \alpha^{C}(u,v)).\end{equation} Therefore, we have proved the assertion.

Recall that $TP_1\neq TP_2$. It then follows from Lemma \ref{subspace-alp}(i) and $T\in \mathcal{SF}(C,\tilde{C})$ that $\alpha^{\tilde{C}}(TP_1,TP_2)>1$ and there are $u_{\varepsilon}\in P_1, v_{\varepsilon}\in P_2$ such that $\alpha^{\tilde{C}}(Tu_{\varepsilon}, Tv_{\varepsilon})\in (\alpha^{\tilde{C}}(TP_1,TP_2)-\varepsilon, \alpha^{\tilde{C}}(TP_1,TP_2)]$ and $\alpha^{\tilde{C}}(TP_1,TP_2)-\varepsilon>1$ for any positive number $\varepsilon$ small enough. By virtue of the assertion above, one has that ${\rm In}(\alpha^{\tilde{C}}(Tu_{\varepsilon}, Tv_{\varepsilon}))\leq\frac{1-\kappa}{1+\kappa}\cdot {\rm In}(\alpha^{C}(u_{\varepsilon},v_{\varepsilon})) \leq \frac{1-\kappa}{1+\kappa}\cdot {\rm In}(\alpha^{C}(P_1,P_2))$. Let $\varepsilon\rightarrow 0$. By virtue of Lemma \ref{subspace-alp}(ii), one has $ {\rm In}(\alpha^{\tilde{C}}(TP_1,TP_2))\leq \frac{1-\kappa}{1+\kappa}\cdot {\rm In}(\alpha^{C}(P_1,P_2))$.

\vskip 3mm
{\it The proof of (iii).} $T\in\mathcal{SF}(C,\tilde{C})$ implies that $T\mid_{P}$ is injective for any $P\in G_k(C)$. Then, $T P\subset TC\subset \tilde{C}$ is a $k$-dimensional linear subspace. Note that $C$ and $S$ are symmetrical w.r.t the original point of $X$. For any given $w\in \tilde{\tilde{C}}$, there are sequences $\{t_n\}_{n\in \mathbb{N}^+}\subset \mathbb{R}^+,\,\,\{v_{1n}\}_{n\in \mathbb{N}^+}\subset TC\cap S,\,\,\{v_{2n}\}_{n\in \mathbb{N}^+}\subset B_{\tilde{\kappa}}(0)$ such that $\lim\limits_{n\rightarrow +\infty}t_n\cdot v_{1n}+t_n\cdot v_{2n}=w$. Clearly, $\norm{w}<+\infty$ for such a $w\in \tilde{\tilde{C}}$, and $\norm{t_n\cdot v_{1n}+t_n\cdot v_{2n} }\in [(1-\tilde{\kappa})t_n,  (1+\tilde{\kappa})t_n ]$ for any $n\in\mathbb{N}^+$. Together with $\tilde{\kappa}\in(0,\kappa)\subset (0,1)$, one has that $\{t_n\}_{n\in \mathbb{N}^+}$ is bounded. Then, one can take a subsequence $\{n_i\}_{i\in \mathbb{N}^+}\subset \mathbb{N}^+$ such that $\lim\limits_{i\rightarrow +\infty}t_{n_i}=t_{w}$ for some $t_{w}\in \mathbb{R}^+$, and $\lim\limits_{i\rightarrow +\infty}t_{n_i}v_{1n_i}+t_{n_i}v_{2n_i}=w$. It entails that 

\begin{equation}\label{layer-Ctt} w\in t_w\cdot \overline{TC\cap S+B_{\tilde{\kappa}}(0)}
\end{equation} for any given $w\in \tilde{\tilde{C}}$. Note that for any $w\in \overline{TC\cap S+B_{\tilde{\kappa}}(0)}$, one has that 

\begin{equation}\label{w-dist-Ct} \inf\limits_{v\in X\setminus \tilde{C}}\norm{w-v}\geq \inf\limits_{v\in X\setminus \tilde{C}}\{\inf\limits_{u\in TC\cap S}\abs{\norm{w-u}-\norm{u-v}}\}\geq \kappa-\tilde{\kappa}.\end{equation} It then follows from (\ref{layer-Ctt})-(\ref{w-dist-Ct}) that 

\begin{equation}\label{ttC-inttC}\tilde{\tilde{C}}\subset{\rm Int} \tilde{C}\cup\{0\}.\end{equation} Together with $\tilde{C}$ being a complemented and $k$-solid cone, $\tilde{\tilde{C}}$ is also a complemented and $k$-solid cone.

Clearly, 

$$\inf\limits_{u\in TC\cap S,v\in X\setminus\tilde{\tilde{C}}}\norm{u-v}\geq \tilde{\kappa}.$$ On the other hand, it follows from (\ref{layer-Ctt}) and $\tilde{\tilde{C}}$ being complemented that there is a $v\in \overline{TC\cap S+B_{\tilde{\kappa}}(0)}\cap \partial \tilde{\tilde{C}}$, and then, there is a sequence $v_n=v_{1n}+v_{2n}$ such that $v_{1n}\in TC\cap S$, $v_{2n}\in B_{\tilde{\kappa}}(0)$ and $\lim\limits_{n\rightarrow +\infty} v_n=v$ for such a $v$. It implies that 

$$\inf\limits_{u\in TC\cap S}\norm{u-v}\leq \norm{v_{1n}-v}\leq\tilde{\kappa}+\norm{v_{n}-v}.$$ Consequently,

$$\inf\limits_{u\in TC\cap S,v\in X\setminus\tilde{\tilde{C}}}\norm{u-v}\leq \tilde{k}.$$ Thus, $T\in \mathcal{SF}(C,\tilde{\tilde{C}})$ with the separation index $\tilde{\kappa}$. Therefore, we have proved (iii).

\vskip 3mm
{\it The proof of (iv).} If $TP_1=TP_2$, then (\ref{gapmetric-angle-relation}) holds. Clearly, the second inequality in (\ref{gapmetric-angle-relation}) holds.  In the following, we only prove the first inequality in (\ref{gapmetric-angle-relation}) for the case $TP_1\neq TP_2$. Note that for any given $v\in \partial \tilde{\tilde{C}}\cap S$ and $\varepsilon>0$ small enough, there is $v_{tc}\in TC$ and $v_{tc\tilde{\tilde{c}}}\in B_{\norm{v_{tc}}\cdot\tilde{\kappa}}(0)$ such that $\norm{v-(v_{tc}+v_{tc\tilde{\tilde{c}}})}<\varepsilon$. Then, $ \frac{1+\varepsilon}{1-\tilde{\kappa}}\geq\norm{v_{tc}}\geq \frac{1-\varepsilon}{1+\tilde{\kappa}}$. Note that $\inf\limits_{v^{\prime}\in X\setminus \tilde{C}}\norm{v_{tc}-v^{\prime}}\geq\kappa\cdot\norm{v_{tc}}$ and $\norm{v_{tc}-v}\leq\varepsilon+\norm{v_{tc\tilde{\tilde{c}}}}\leq\varepsilon+\frac{\kappa}{2}\cdot\norm{v_{tc}}$. Then, $\inf\limits_{v^{\prime}\in X\setminus \tilde{C}}\norm{v-v^{\prime}}\geq(\kappa-\tilde{\kappa})\cdot\norm{v_{tc}}- \varepsilon\geq \frac{(\kappa-\tilde{\kappa})\cdot(1-\varepsilon)}{1+\tilde{\kappa}}-\varepsilon$ for any given $v\in \partial \tilde{\tilde{C}}\cap S$ and $\varepsilon>0$ small enough. Together with the arbitrariness of $\varepsilon>0$, one has that 

\begin{equation}\label{dist-c..-c.}\inf\limits_{v\in \tilde{\tilde{C}}\cap S ,v^{\prime}\in X\setminus \tilde{C}}\norm{v-v^{\prime}}\geq \frac{\kappa-\tilde{\kappa}}{1+\tilde{\kappa}}\end{equation} By virtue of Lemma \ref{SF-angles}(iii), one has that $T\in\mathcal{SF}(C,\tilde{\tilde{C}})$; and more, $TP_1,\,TP_2\in {\rm Int}\tilde{\tilde{C}}\cup\{0\}\subset {\rm Int}\tilde{C}\cup\{0\}$. Since $\tilde{C}$ is complemented and $k$-solid, there is a $k$-codimensional linear subspace $L\subset (X\setminus \tilde{C})\cup\{0\}$ such that 

$$X=TP_2\oplus L.$$ It then follows from (\ref{dist-c..-c.}) that for any $v\in \tilde{\tilde{C}}$ and $v_l\in L\cap S$,

\begin{equation}\label{C..L-distance}\norm{v-v_l}\geq \max\{\mid1-\norm{v}\mid, \norm{v}\cdot\frac{\kappa-\tilde{\kappa}}{1+\tilde{\kappa}}\}\geq\frac{\kappa-\tilde{\kappa}}{1+\kappa}.\end{equation} Let $u\in TP_1\cap S$.  By utilizing (\ref{dist-c..-c.}) again, one has that $u=u_{tp2}+u_{l}$ such that $u_{tp2}\in TP_2\setminus\{0\}$, $u_{l}\in L$ and 

\begin{equation}\label{utp2-range}\norm{u_{tp2}}\in [\frac{\kappa-\tilde{\kappa}}{1+\tilde{\kappa}},\,\frac{1+\tilde{\kappa}}{\kappa-\tilde{\kappa}}]\end{equation} Note that $u_{tp2}-u=-u_l\notin{\rm Int}\tilde{\tilde{C}}$ implies 

\begin{equation}\label{al-uutp2}\alpha^{\tilde{\tilde{C}}}_0(u,u_{tp2}),\,\,\alpha^{\tilde{\tilde{C}}}_0(u_{tp2},u)\geq1.\end{equation} By virtue of (\ref{C..L-distance}), one has that
 $u_{l}+{\rm Int}B_{\norm{u_{l}}\cdot\frac{\kappa-\tilde{\kappa}}{1+\kappa}}(0)\subset X\setminus \tilde{\tilde{C}}$. Then, one has that
 
\begin{equation}\label{al-utp2u}\alpha^{\tilde{\tilde{C}}}_0(u_{tp2},u)\geq1+ \frac{(\kappa-\tilde{\kappa})\cdot \norm{u_l}}{(1+\kappa)\cdot \norm{u_{tp2}}}.\end{equation} It yields that
 
\begin{equation}\label{point-dist-num}\begin{aligned}& \norm{u-\frac{u_{tp2}}{\norm{ u_{tp2}}}}\leq \norm{ u-u_{tp2}}+\abs{1-\norm{u_{tp2}}}\leq 2\norm{u-u_{tp2}}=2\norm{u_l}\\
&\overset{(\ref{al-utp2u})}{\leq}\frac{2(1+\kappa)}{\kappa-\tilde{\kappa}}\cdot(\alpha^{\tilde{\tilde{C}}}_0(u_{tp2},u)-1)\cdot\norm{u_{tp2}}\overset{(\ref{al-uutp2})}{\leq}\frac{2(1+\kappa)}{\kappa-\tilde{\kappa}}\cdot(\alpha^{\tilde{\tilde{C}}}(u,u_{tp2})-1)\cdot\norm{u_{tp2}}\\
&\overset{(\ref{utp2-range})}{\leq}\frac{ 2(1+\kappa)(1+\tilde{\kappa})}{(\kappa-\tilde{\kappa})^2}\cdot(\alpha^{\tilde{\tilde{C}}}(TP_1,TP_2)-1),
\end{aligned}\end{equation} and hence, 

$$\inf\limits_{v_{tp2}\in TP_2\cap S}\norm{u-v_{tp2}}\leq \frac{ 2(1+\kappa)(1+\tilde{\kappa})}{(\kappa-\tilde{\kappa})^2}\cdot(\alpha^{\tilde{\tilde{C}}}(TP_1,TP_2)-1).$$ By the arbitrariness of $u\in TP_1\cap S$ and $TP_1, TP_2$, one has that (\ref{gapmetric-angle-relation}) holds. Hence, we have proved (iv).

Therefore, we have completed the proof.
\end{proof}

Now, we prove the main theorem.
\begin{proof} Since $(F,\mathcal{T})$ on $K\times X$ admits a $k$-ES, there are $k$-dimensional continuous bundle $K\times (P_x)$ and $k$-codimensional continuous bundle $K\times (Q_x)$ such that (i)-(iii) in Definition \ref{k-ES} hold. (\ref{Separation}) in Definition \ref{k-ES}(iii) states that 

$$\norm{T^n_x w}\leq M\gamma^n\norm{T^n_x v}$$ for any $v\in P_x\cap S$, $w\in Q_x\cap S$, $x\in K$ and $n\in\mathbb{N}^+$, where $M>0$ and $\gamma\in(0,1)$. Note that $P_x$ is $k$-dimensional  and $T_xP_x=P_{F(x)}$ for any $x\in K$. Then, 

$$m(T_x\mid_{P_x})=\min\limits_{v\in P_x\cap S}\{\norm{T_x v}\}$$ exists such that $m(T_x\mid_{P_x})>0$ for any $x\in K$; and more, together with the compactness of $K$ and the continuity of $\mathcal{T}$, $K\times (P_x)$, one has that 

$$\delta_{normp}=\inf\limits_{x\in K}\{m(T_x\mid_{P_x})\}$$ is a well-defined number in $(0,+\infty)$. Note that there is a $k$-dimensional continuous bundle $K\times (L_x)\subset K\times X^*$ such that $Q_x={\rm Ker}(L_x)$ and hence, $Q_x$ is a closed $k$-codimensional linear subspace of $X$ for any $x\in K$. Together with $P_x\cap S$ being compact and $X=P_x\oplus Q_x$, one has that 

$$d^{inf}_x=\inf\limits_{v\in P_x\cap S}\{\inf\limits_{w\in Q_x}\{\norm{v-w}\}\}>0,$$ and then for any $v\in X\cap S$, one has $\norm{\Pi^{P_x}_Q v}\leq \frac{1}{d^{inf}_x}$. Thus, 

$$ \norm{\Pi^{P_x}_Q}_{L(X)}\leq \frac{1}{d^{inf}_x}$$ for any $x\in K$. For any given $\varepsilon_0>0$, one has that 

\begin{equation}\label{L-Q-conti}\begin{aligned}&\quad\quad {\rm if}\,\,\tilde{x},\,x\in K\,\,{\rm such \,\,that}\,\, d^*(L_{\tilde{x}},\,L_{x})\leq\varepsilon_0,\,\,{\rm then}\\
&\sup\limits_{v\in Q_{\tilde{x}}\cap S}\{\inf\limits_{w\in Q_x}\{\norm{v-w}\}\}\leq\varepsilon_0\quad {\rm and} \quad\sup\limits_{v\in Q_{x }\cap S}\{\inf\limits_{w\in Q_{\tilde{x}}}\{\norm{v-w}\}\}\leq\varepsilon_0.
\end{aligned}\end{equation} {\big(}Otherwise, suppose that there is a $v\in Q_{\tilde{x}}\cap S$ such that $\inf\limits_{w\in Q_x}\{\norm{v-w}\}>\varepsilon_0$ without loss of generality. It then follows from Hahn-Banach theorem that there is a $l_x\in L_x\cap S^*$ such that $l_x(v)=\inf\limits_{w\in Q_x}\{\norm{v-w}\}>\varepsilon_0$. As a consequence, $\inf\limits_{l_{\tilde{x}}\in L_{\tilde{x}}}\{\norm{l_x-l_{\tilde{x}}}_*\}>\varepsilon_0$, a contradiction to $d^*(L_{\tilde{x}},\,L_{x})\leq\varepsilon_0$.{\big)} Together with the continuity of $K\times (P_x),\,\,K\times (Q_x)$ and triangle inequality of norms, there is a $\delta^{d^{inf}}_{x,\varepsilon_0}>0$ such that $\abs{d^{inf}_x-d^{inf}_{\tilde{x}}}\leq \varepsilon_0$ if $\norm{x-\tilde{x}}\leq \delta^{d^{inf}}_{x,\varepsilon_0}$ for any $\varepsilon_0>0$ and $x\in K$. Furthermore, combinating with the compactness of $K$, we obtain that

$$\delta_{\Pi^{P_K}_Q}=\sup\limits_{x\in K}\{\norm{\Pi^{P_x}_Q}_{L(X)}\}\in (0, +\infty)$$ Together with (\ref{Pertu-fibre-map}), there is a $\delta^{\prime}_0\in(0,1)$ such that  

$$\delta_{norm}=\sup\limits_{x\in B_{\delta^{\prime}_0}(K)}\{\norm{T_x}_{L(X)},\,\,\norm{G_x}_{L(X)}\}\in (0,+\infty).$$ 

We further assert that $x:\mapsto \Pi^{Q_x}_P$ is continuous on $K$. As a consequence, $x:\mapsto \Pi^{P_x}_Q$ is continuous on $K$. The continuity of $x:\mapsto P_x$ on $K$ and (\ref{L-Q-conti}) implies that for any $x\in K$ and $\varepsilon_0>0$, there is a $\delta^{\Pi}_{x,\varepsilon_0}>0$ such that for any $v\in X\cap S$, there are $v_{P_x}\in P_x, \,\,v_{Q_x}\in Q_x$ such that 

$$\begin{aligned}&\norm{\Pi^{P_{\tilde{x}}}_Q v-v_{P_x}}\leq \varepsilon_0\cdot \norm{\Pi^{P_{\tilde{x}}}_Q v}\leq \varepsilon_0\cdot \delta_{\Pi^{P_K}_Q},\\
&\norm{\Pi^{Q_{\tilde{x}}}_Pv-v_{Q_x}}\leq \varepsilon_0\cdot \norm{\Pi^{Q_{\tilde{x}}}_P v}\leq \varepsilon_0\cdot (1+\delta_{\Pi^{P_K}_Q}),
\end{aligned}$$ and 

$$\begin{aligned}&\norm{\Pi^{Q_{x}}_P\circ \Pi^{P_{\tilde{x}}}_Q v}\leq \varepsilon_0\cdot\delta_{\Pi^{P_K}_Q}(1+\delta_{\Pi^{P_K}_Q}),\\
&\norm{\Pi^{P_{x}}_Q\circ \Pi^{Q_{\tilde{x}}}_P v}\leq \varepsilon_0\cdot\delta_{\Pi^{P_K}_Q}(1+\delta_{\Pi^{P_K}_Q}).\end{aligned}$$ Note that for any $v\in X\cap S$ and $x,\tilde{x}\in K$, one has that

$$\begin{aligned}&\Pi^{Q_x}_P v=\Pi^{Q_{x}}_P\circ \Pi^{P_{\tilde{x}}}_Q v+\Pi^{Q_x}_P\circ \Pi^{Q_{\tilde{x}} }_P v,\\
&\Pi^{Q_{\tilde{x}} }_P v= \Pi^{P_x}_Q\circ\Pi^{Q_{\tilde{x}} }_P v+\Pi^{Q_x}_P\circ \Pi^{Q_{\tilde{x}} }_P v.\end{aligned}$$ It then follows that for any $\varepsilon_0>0$ and $x\in K$, there is a $ \delta^{\Pi}_{x,\varepsilon_0}>0$ such that if $\tilde{x}\in K$ such that $\norm{x-\tilde{x}}\leq \delta^{\Pi}_{x,\varepsilon_0}$, then

$$\norm{\Pi^{Q_x}_P v-\Pi^{Q_{\tilde{x}} }_P v}\leq 2\varepsilon_0\cdot\delta_{\Pi^{P_K}_Q}(1+\delta_{\Pi^{P_K}_Q})$$ for any $v\in X\cap S$ . So, we have proved this assetion.

Recall that $F$ is continuous maps on $B_1(K)$; $F$ is a homeomorphism on $K$; and $K\times (P_x),\,K\times (Q_x)$ are continuous. Furthermore, there are a positive constant in $(0, \delta^{\prime}_0)$, denoted by $\delta_0$, and a nonnegative and increasing function $c(\cdot)$ with $\lim\limits_{t\rightarrow 0^+}c(t)=c(0)=0$ such that 

\begin{equation}\label{FTG-C-bound}\begin{aligned}&\norm{F(x)-F(y)}\leq c(\norm{x-y}),\quad\norm{\tilde{F}(x)-\tilde{F}(y)}\overset{(\ref{Pertu-K-F})}{\leq} c(\norm{x-y})+2\delta\\
&{\rm and}\\
&\norm{T_{x}-T_y}_{L(X)}\leq c(\norm{x-y}),\quad\norm{G_{x}-G_y}_{L(X)}\overset{(\ref{Pertu-fibre-map})}{\leq} c(\norm{x-y})+2\varepsilon\end{aligned}\end{equation} for any $x,y\in B_{\delta_0}(K)$ such that $x\in K$ and $\norm{x-y}\leq\delta_0$; and 

\begin{equation}\label{Pi-c-bound}\norm{\Pi^{P_x}_Q-\Pi^{P_y}_Q}_{L(X)}\leq c(\norm{x-y}),\quad \norm{\Pi^{Q_x}_P-\Pi^{Q_y}_P}_{L(X)}\leq c(\norm{x-y})\end{equation} for any $x,y\in K$ such that $\norm{x-y}\leq\delta_0$.

In the rest of the proof, we take $\delta\in(0,\delta_0)$. Since $\sup\limits_{x\in \tilde{K}}\{\norm{h(x)-x}\}\leq\delta$ and $h$ is a map from $\tilde{K}$ to $K$, one has that $\tilde{K}\subset B_{\delta}(K) \subset B_{\delta_0}(K)$. Recall that $\sup\limits_{x\in B_1(K)}\{\norm{\tilde{F}(x)-F(x)}\}\leq \delta$. By mathematical induction, ones have the following estimates

\begin{equation}\begin{aligned}\label{esti-pertu-maps}&\norm{\tilde{F}^n(x)-F^n(x)}\leq c_n(\delta),\quad \norm{h(\tilde{F}^n(x))-F^n(x)}\leq \hat{c}_n(\delta),\\
&\norm{F^n(x)-F^n(h(x))}\leq \mathring{c}_{n}(\delta),\quad {\rm and \,\,hence,}\\
&\norm{G_{\tilde{F}^n(x)}-T_{F^n(h(x))} }_{L(X)}\leq \varepsilon+c\big(c_n(\delta)+\mathring{c}_{n}(\delta)\big)
\end{aligned}\end{equation} for any $x\in \tilde{K}$ and $\delta>0$ such that 

$$\begin{cases}\hat{c}_i(\delta),\,\,\,c_i(\delta)+\mathring{c}_i(\delta)\leq\delta_0\,\,{\rm for\,\,all}\,i\leq n-1\,{\rm with}\,\,i\in\mathbb{N}^+,&\,{\rm if}\,n\in\mathbb{N}^+\setminus\{1\}\\ {\rm no\,\,other\,\,restriction},\,\,&{\rm if}\,\, n=1\end{cases},$$ where $c_1(\delta)=\delta$, $\mathring{c}_{1}(\delta)=c(\delta)$, $\hat{c}_n(\delta)=\delta+c_n(\delta)$ for any $n\in\mathbb{N}^+$, and $c_n(\delta)=\hat{c}_1(\delta)+c\circ\hat{c}_{n-1}(\delta)$, $\mathring{c}_{n}(\delta)=c\circ \mathring{c}_{n-1}(\delta)$ for any $n\in\mathbb{N}^+\setminus\{1\}$.  Note that 

$$\norm{F^n(h(x))-h(\tilde{F}^n(x))}\leq\norm{F^n(h(x))-F^n(x)}+\norm{F^n(x)-h(\tilde{F}^n(x))}$$ for any $x\in \tilde{K}$ and $n\in\mathbb{N}^+$. By virtue of (\ref{esti-pertu-maps}), one has that

\begin{equation}\label{pertu-F-esti}\norm{F^n(h(x))-h(\tilde{F}^n(x))}\leq \tilde{c}_n(\delta)=\hat{c}_n(\delta)+\mathring{c}_n(\delta)\end{equation} and hence,

\begin{equation}\label{pertu-Pi-esti}\norm{\Pi^{Q_{h(\tilde{F}^n(x))}}_P- \Pi^{Q_{F^n (h(x))}}_P}_{L(X)},\, \norm{\Pi^{P_{h(\tilde{F}^n(x))}}_Q- \Pi^{P_{F^n (h(x))}}_Q}_{L(X)}\leq \tilde{\tilde{c}}_n(\delta)=c\circ\tilde{c}_n(\delta)\end{equation} for any $x\in \tilde{K}$ and $\delta>0$ such that $\begin{cases}\tilde{c}_i(\delta)\leq\delta_0\,\, {\rm for\,\, all}\,\, i\leq n-1\,\, {\rm with}\,\, i\in\mathbb{N}^+,& \,\,{\rm if}\,\,n\in\mathbb{N}^+\setminus\{1\}\\ {\rm no\,\,other\,\, restriction},\,\,&{\rm if}\,\, n=1\end{cases}$. Note that 

$$\begin{aligned}&\norm{G_x-T_{h(x)}}_{L(X)}\leq \norm{G_x-T_x}_{L(X)}+\norm{T_x-T_{h(x)}}_{L(X)}\leq \varepsilon+c(\delta),\\
&\norm{G_x^n-T^n_{h(x)}}_{L(X)}\leq\norm{G_{\tilde{F}^{n-1}(x)}}_{L(X)}\cdot\norm{G_x^{n-1}-T_{h(x)}^{n-1}}_{L(X)}\\
&\quad\quad\quad\quad\quad\quad\quad\quad+\norm{G_{\tilde{F}^{n-1}(x)}-T_{F^{n-1}(h(x))}}_{L(X)}\cdot\norm{T^{n-1}_{h(x)}}_{L(X)}\end{aligned}$$ for any $x\in \tilde{K}$ and $n\in\mathbb{N}^+$. By utilizing mathematical induction again, one has that 

\begin{equation}\label{pertu-T-G-esti}\norm{G_x^n-T^n_{h(x)}}_{L(X)}\leq \tilde{\tilde{\tilde{c}}}_n(\delta)
\end{equation} for any $x\in \tilde{K}$ and $\delta>0$ such that $\begin{cases}\tilde{c}_i(\delta)\leq\delta_0\,\, {\rm for\,\, all}\,\, i\leq n-1\,\, {\rm with}\,\, i\in\mathbb{N}^+,& \,\,{\rm if}\,\,n\in\mathbb{N}^+\setminus\{1\}\\ {\rm no\,\,other\,\, restriction},\,\,&{\rm if}\,\, n=1\end{cases},$ where $\tilde{\tilde{\tilde{c}}}_1(\delta)=\varepsilon+c(\delta)$ and $\tilde{\tilde{\tilde{c}}}_n(\delta)=\delta_{norm}\cdot \tilde{\tilde{\tilde{c}}}_{n-1}(\delta)+\delta_{norm}^{n-1}\cdot[\varepsilon+c\big(c_{n-1}(\delta)+\mathring{c}_{n-1}(\delta) \big)]$ for any $n\in\mathbb{N}^+\setminus\{1\}$. Clearly, $c_n,\,\hat{c}_n\,\mathring{c}_n$ and $\tilde{c}_n$, $\tilde{\tilde{c}}_n$, $\tilde{\tilde{\tilde{c}}}_n$ are nonnegative and increasing functions on $\mathbb{R}^+$ for any $n\in\mathbb{N}^+$ such that 

\begin{equation}\label{pertu-esti-func-bound}\begin{aligned}&\lim\limits_{t\rightarrow 0^+}c_n(t)=\lim\limits_{t\rightarrow 0^+}\hat{c}_n(t)=\lim\limits_{t\rightarrow 0^+}\mathring{c}_n(t)=\lim\limits_{t\rightarrow 0^+}\tilde{c}_n(t)=\lim\limits_{t\rightarrow 0^+}\tilde{\tilde{c}}_{n}(t)=0\\
&{\rm and}\\
&\lim\limits_{t\rightarrow 0^+}\tilde{\tilde{\tilde{c}}}_{n}(t)=n \varepsilon\cdot \delta_{norm}^{n-1}. \end{aligned}\end{equation} 

\vskip 5mm
\noindent{\bf Step 1. The invariant cone fields w.r.t. $(\tilde{F}^{N},\mathcal{G}^{N})$.}
\vskip 5mm
Denoted by 

\begin{equation}\label{def-Cx}C_{x}=\{v\in X:\,\norm{\Pi^{Q_{h(x)}}_P v}\leq \norm{\Pi^{P_{h(x)}}_Qv }\}\end{equation} for any $x\in \tilde{K}$. Let $N^{\prime}$ be the smallest integer in $\mathbb{N}^+\setminus\{1\}$ such that $M\gamma^{N^{\prime}}\in(0,\frac{1}{2})$, where $M,\gamma>0$ are the constants for $(F,\mathcal{T})$ matching the property described in Definition \ref{k-ES}(iii). Define the map $\mathcal{G}^{N}$ from $\mathcal{D}^{\prime}(\mathcal{G}^N)\subset B_1(K)$ to $L(X)$ by 

$$\mathcal{G}^{N}(x)=G^{N}_x,$$ where $\mathcal{D}^{\prime}(\mathcal{G}^N)=\{x\in B_1(K):\, \tilde{F}^i(x)\in B_1(K)\,\,{\rm for \,\,all}\,\,i\in\mathbb{N}^+\,\,{\rm with}\,\,i\leq N-1 \}$ and 

\begin{equation}\label{N-value} N\in \mathbb{N}^+ \quad {\rm with}\quad N\geq N^{\prime}.\end{equation} It then follows from $\tilde{F}$ being a homeomorphism on $\tilde{K}$ that $\tilde{K}\subset \mathcal{D}^{\prime}(\mathcal{G}^N)$ and $(\tilde{F}^{N},\mathcal{G}^{N})$ is a linear cocycle on $\tilde{K}\times X$ for (\ref{N-value}) holds. In the following, we always take $N$ such that (\ref{N-value}) holds.

Take

\begin{equation}\label{ep-etat} \varepsilon\in [0, \frac{(\frac{1}{2}-M\gamma^{2N^{\prime}-1})\cdot\delta^{2N^{\prime}-1}_{normp}}{(2N^{\prime}-1)\cdot\delta_{norm}^{2N^{\prime}-2}\cdot(2+3\delta_{\Pi_Q^{P_K}})})\end{equation} Note that $f(N)=\frac{(\frac{1}{2}-M\gamma^{N})\cdot\delta^N_{normp}}{N\delta_{norm}^{N-1}\cdot(2+3\delta_{\Pi_Q^{P_K}})}$ is a directly decreasing function w.r.t $N$. One has that 

\begin{equation}\label{vare-low-boun}\varepsilon<f(N)\end{equation} for all $N\in \mathbb{N}^+$ such that $N\leq 2N^{\prime}-1$. It then follows from (\ref{Pertu-K-F})-(\ref{Pertu-fibre-map}) and the continuity of $F$ and $\mathcal{T}$ that there is a $\delta^{\prime}_{\varepsilon}\in(0,\delta_0)$ such that for any $\delta\in[0,\delta^{\prime}_{\varepsilon})$, ones have that  $\tilde{c}_i(\delta)\leq\delta_0$ for all $i\in\mathbb{N}^+$ with $i\leq 2N^{\prime}-2$.  Hence, for any $N\in\mathbb{N}^+$ such that $N^{\prime}\leq N\leq 2N^{\prime}-1$, the inequality 

\begin{equation}\label{prori-esti-ine}[1-\tilde{\tilde{c}}_N(\delta)\cdot (1+M\gamma^N)]\cdot\norm{T_{h(x)}^N \Pi^{ P_{h(x)}}_Q v}-\tilde{\tilde{\tilde{c}}}_N(\delta)\cdot\norm{\Pi^{P_{h(\tilde{F}^N(x))}}_Q}_{L(X)}>0\end{equation} implies the following estimates:

\begin{equation}\nonumber\begin{aligned}&\frac{\norm{\Pi^{Q_{h(\tilde{F}^N(x))}}_PG^N_xv}}{\norm{\Pi^{P_{h(\tilde{F}^N(x))}}_QG^N_x v}}\overset{(\ref{pertu-T-G-esti})}{\leq} \frac{ \norm{\Pi^{Q_{h(\tilde{F}^N(x))}}_P T^N_{h(x)}v}+ \tilde{\tilde{\tilde{c}}}_N(\delta)\cdot\norm{\Pi^{Q_{h(\tilde{F}^N(x))}}_P}_{L(X)}}{\norm{\Pi^{P_{h(\tilde{F}^N(x))}}_Q T^N_{h(x)} v} -\tilde{\tilde{\tilde{c}}}_N(\delta)\cdot\norm{\Pi^{P_{h(\tilde{F}^N(x))}}_Q}_{L(X)}}\\
&\leq \frac{ \norm{\Pi^{Q_{h(\tilde{F}^N(x))}}_P\circ \Pi^{ P_{F^N(h(x))}}_Q T^N_{h(x)}v} + \norm{\Pi^{Q_{h(\tilde{F}^N(x))}}_P\circ \Pi^{ Q_{F^N(h(x))}}_P T^N_{h(x)} v}+ \tilde{\tilde{\tilde{c}}}_N(\delta)\cdot\norm{\Pi^{Q_{h(\tilde{F}^N(x))}}_P}_{L(X)}}{\norm{\Pi^{ P_{h(\tilde{F}^N(x))}}_Q \circ \Pi^{ P_{F^N(h(x))}}_Q T^N_{h(x)} v}-\norm{\Pi^{P_{h(\tilde{F}^N(x))}}_Q\circ \Pi^{ Q_{F^N(h(x))}}_P T^N_{h(x)} v}- \tilde{\tilde{\tilde{c}}}_N(\delta)\cdot\norm{\Pi^{P_{h(\tilde{F}^N(x))}}_Q}_{L(X)}}\\
&\overset{(\ref{pertu-Pi-esti})}{\leq} \frac{ \norm{\Pi^{ Q_{F^N(h(x))}}_P T^N_{h(x)} v}+\tilde{\tilde{c}}_N(\delta)\cdot\big[\norm{\Pi^{ Q_{F^N(h(x))}}_P T^N_{h(x)}v}+ \norm{\Pi^{ P_{F^N(h(x))}}_Q T^N_{h(x)}v}\big]+\tilde{\tilde{\tilde{c}}}_N(\delta)\cdot\norm{\Pi^{Q_{h(\tilde{F}^N(x))}}_P}_{L(X)}}{\norm{\Pi^{P_{F^N(h(x))}}_Q T^N_{h(x)} v}-\tilde{\tilde{c}}_N(\delta)\cdot\big[\norm{\Pi^{ Q_{F^N(h(x))}}_P T^N_{h(x)}v}+\norm{\Pi^{ P_{F^N(h(x))}}_Q T^N_{h(x)} v }\big]-\tilde{\tilde{\tilde{c}}}_N(\delta)\cdot\norm{\Pi^{P_{h(\tilde{F}^N(x))}}_Q}_{L(X)}}\\
&\overset{(\ref{Separation})}{\leq} \frac{  [(1+\tilde{\tilde{c}}_N(\delta))\cdot M\gamma^N +  \tilde{\tilde{c}}_N(\delta)]\cdot\norm{\Pi^{ P_{F^N(h(x))}}_Q T_{h(x)}^N v} + \tilde{\tilde{\tilde{c}}}_N(\delta)\cdot\norm{\Pi^{Q_{h(\tilde{F}^N(x))}}_P}_{L(X)}}{[1-\tilde{\tilde{c}}_N(\delta)\cdot (1+M\gamma^N)]\cdot\norm{\Pi^{ P_{F^N(h(x))}}_Q T_{h(x)}^N v} -\tilde{\tilde{\tilde{c}}}_N(\delta)\cdot\norm{\Pi^{P_{h(\tilde{F}^N(x))}}_Q}_{L(X)}}\\
&\overset{{\rm Def.}\,\ref{k-ES}\, (ii)}{=}\frac{  [(1+\tilde{\tilde{c}}_N(\delta))\cdot M\gamma^N +  \tilde{\tilde{c}}_N(\delta)]\cdot\norm{T_{h(x)}^N \Pi^{ P_{h(x)}}_Qv} + \tilde{\tilde{\tilde{c}}}_N(\delta)\cdot\norm{\Pi^{Q_{h(\tilde{F}^N(x))}}_P}_{L(X)}}{[1-\tilde{\tilde{c}}_N(\delta)\cdot (1+M\gamma^N)]\cdot\norm{T_{h(x)}^N \Pi^{ P_{h(x)}}_Q v}-\tilde{\tilde{\tilde{c}}}_N(\delta)\cdot\norm{\Pi^{P_{h(\tilde{F}^N(x))}}_Q}_{L(X)}}\\
&\leq M\gamma^N+\frac{ \tilde{\tilde{c}}_N(\delta)\cdot(1+M\gamma^N)^2\cdot\norm{T_{h(x)}^N \Pi^{ P_{h(x)}}_Q v}+\tilde{\tilde{\tilde{c}}}_N(\delta)\cdot(\norm{\Pi^{Q_{h(\tilde{F}^N(x))}}_P}_{L(X)} +M\gamma^N\norm{\Pi^{P_{h(\tilde{F}^N(x))}}_Q}_{L(X)})}{[1-\tilde{\tilde{c}}_N(\delta)\cdot (1+M\gamma^N)]\cdot\norm{T_{h(x)}^N \Pi^{ P_{h(x)}}_Q v}-\tilde{\tilde{\tilde{c}}}_N(\delta)\cdot\norm{\Pi^{P_{h(\tilde{F}^N(x))}}_Q}_{L(X)}}\end{aligned}\end{equation} 

\begin{equation}\label{G-contraction-delta-0}\begin{aligned}&\overset{(\ref{pertu-esti-func-bound})\,{\rm and}\,\delta\rightarrow 0}{\mathop{\longrightarrow}\limits_{ {\rm uniformly\,for} \,x\in \tilde{K}\,{\rm and}\,v\in C_x\cap S }} M\gamma^N+\frac{N\varepsilon\delta^{N-1}_{norm}\cdot(\norm{\Pi_P^{Q_{h(\tilde{F}^N(x))}}}_{L(X)}+M\gamma^N \norm{\Pi_Q^{  P_{  h(\tilde{F}^N(x)) }}}_{L(X)})}{\norm{T_{h(x)}^N\Pi_Q^{P_{h(x)}}v}-N\varepsilon\delta_{norm}^{N-1}\norm{\Pi_Q^{P_{h(\tilde{F}^N(x))}}}_{L(X)}}\\
&\overset{{\rm Note \,that}\, 2\norm{\Pi_{Q}^{P_{h(x)}}v}\geq \norm{v}=1 }{\leq} M\gamma^N+\frac{N\varepsilon\delta^{N-1}_{norm}\cdot(\norm{\Pi_P^{Q_{h(\tilde{F}^N(x))}}}_{L(X)}+M\gamma^N \norm{\Pi_Q^{  P_{  h(\tilde{F}^N(x)) }}}_{L(X)})}{\frac{\delta_{normp}^N}{2}-N\varepsilon\delta_{norm}^{N-1}\norm{\Pi_Q^{P_{h(\tilde{F}^N(x))}}}_{L(X)}}\\
&\leq M\gamma^N+\frac{N\varepsilon\delta^{N-1}_{norm}\cdot(1+\delta_{\Pi_Q^{P_K}}\cdot(1+M\gamma^N))}{\frac{\delta_{normp}^N}{2}-N\varepsilon\delta_{norm}^{N-1}\cdot\delta_{\Pi_Q^{P_K}}}\overset{(\ref{ep-etat})\,\,{\rm and}\,\,(\ref{vare-low-boun})}{<} \frac{1}{2}.\end{aligned}\end{equation}  for any $v\in C_x\cap S$ and $x\in \tilde{K}$ with $\delta\in [0,\delta^{\prime}_{\varepsilon})$. Note that 

$$\norm{T_{h(x)}^N \Pi^{ P_{h(x)}}_Q v}\geq \frac{\delta^N_{normp}}{2} $$ for any $v\in C_x\cap S$, $x\in \tilde{K}$ and $N\in\mathbb{N}^+$. So, there is a positive contant $\delta^{\prime\prime}_{\varepsilon}\in [0,\delta^{\prime}_{\varepsilon})$ such that (\ref{prori-esti-ine}) holds and more,

\begin{equation}\label{G-contraction-delta}\tilde{\tilde{\tilde{c}}}_N(\delta)\leq \frac{\delta^N_{normp}}{8}\quad{\rm and} \quad\frac{\norm{\Pi^{Q_{h(\tilde{F}^N(x))}}_PG^N_xv}}{\norm{\Pi^{P_{h(\tilde{F}^N(x))}}_QG^N_x v}}<\frac{1}{2}
\end{equation} for any $v\in C_x\cap S$, $x\in \tilde{K}$, $\delta\in [0,\delta^{\prime\prime}_{\varepsilon})$, and $N\in\mathbb{N}^+$ such that $N^{\prime}\leq N\leq 2N^{\prime}-1$. Then, (\ref{pertu-T-G-esti}) implies that 

\begin{equation}\label{G-contraction-delta-coro}\quad\norm{\Pi^{P_{h(\tilde{F}^N(x))}}_QG^N_x v}>\frac{1}{12}\cdot \delta^N_{normp}
\end{equation} for any $v\in C_x\cap S$, $x\in \tilde{K}$, $\delta\in [0,\delta^{\prime\prime}_{\varepsilon})$, and $N\in\mathbb{N}^+$ such that $N^{\prime}\leq N\leq 2N^{\prime}-1$. 

In the rest of the proof, we always assume $\delta\in[0,\delta^{\prime\prime}_{\epsilon})$. Let 

\begin{equation}\label{Def-t-ttCxN}\begin{aligned}&\tilde{C}_{x,N}=\overline{\mathbb{R}\cdot((G_{\tilde{F}^{-N}(x)}^{N}C_{ \tilde{F}^{-N}(x)})\cap S+B_{\frac{1}{6\delta_{\Pi^{P_K}_Q}+3}}(0))}\\
&{\rm and}\\
&\tilde{\tilde{C}}_{x,N}=\overline{\mathbb{R}\cdot((G_{\tilde{F}^{-N}(x)}^{N}C_{ \tilde{F}^{-N}(x)})\cap S+B_{\frac{1}{12\delta_{\Pi^{P_K}_Q}+6}}(0))}
\end{aligned}\end{equation} for any $x\in\tilde{K}$ and $N\in\mathbb{N}^+$ such that $N^{\prime}\leq N\leq 2N^{\prime}-1$. It follows from (\ref{G-contraction-delta}) that 

\begin{equation}\label{first-layer-focu}\begin{aligned}&\tilde{C}_{x,N}\subset\overline{ \mathbb{R}\cdot \bigcup_{v\in (G_{\tilde{F}^{-N}(x)}^{N}C_{ \tilde{F}^{-N}(x)})\cap S } (v+B_{\frac{\norm{\Pi^{P_{h(x)}}_Qv}}{2\cdot(\norm{\Pi^{P_{h(x)}}_Q}_{L(X)}+\norm{\Pi^{Q_{h(x)}}_P}_{L(X)})}}(0)) }\subset C_x,\\
&{\rm and}\\
&G_{\tilde{F}^{-N}(x)}^{N}\in \mathcal{SF}(C_{ \tilde{F}^{-N}(x)},\,C_x) \,\,{\rm with\,\,the \,\,separation\,\,index}\,\,\kappa_{ \tilde{F}^{-N}(x)}(N)\geq\frac{1}{6\delta_{\Pi^{P_K}_Q}+3}\end{aligned}
\end{equation} for any $x\in\tilde{K}$ and $N\in\mathbb{N}^+$ such that $N^{\prime}\leq N\leq 2N^{\prime}-1$. Together with Lemma \ref{SF-angles}(iii), ones have that 

\begin{equation}\label{second-layer-focu}\begin{aligned}&\tilde{\tilde{C}}_{x,N}\subset {\rm Int}C_x\cup \{0\} \,\,{\rm is\,\,a\,\, complemented\,\, and}\,\, k{\rm-solid}\,\, {\rm cone},\\
&{\rm and}\\
& G_{\tilde{F}^{-N}(x)}^{N}\in \mathcal{SF}(C_{ \tilde{F}^{-N}(x)},\tilde{\tilde{C}}_{x,N}) \,\,{\rm with\,\,the \,\,separation\,\,index\,\,}\tilde{\kappa}_{ \tilde{F}^{-N}(x) }(N)=\frac{1}{12\delta_{\Pi^{P_K}_Q}+6}
\end{aligned}\end{equation} for any $x\in\tilde{K}$ and $N\in\mathbb{N}^+$ such that $N^{\prime}\leq N\leq 2N^{\prime}-1$. 

In the rest of proof, we denote that 

\begin{equation}\label{N-pri-denot}\tilde{\tilde{C}}_{x}=\tilde{\tilde{C}}_{x,N^{\prime}}\quad{\rm and}\quad\kappa_x=\kappa_x(N^{\prime})\end{equation} for any $x\in\tilde{K}$. Thus, 

$$\tilde{K}\times (C_x),\,\tilde{K}\times (\tilde{\tilde{C}}_{x}),\,\tilde{K}\times (\tilde{\tilde{C}}_{x,N})$$ are cone fields on $\tilde{K}$ for any $N\in\mathbb{N}^+$ such that $N^{\prime}\leq N\leq 2 N^{\prime}-1$. Combinating (\ref{angle-upper-bound}), (\ref{first-layer-focu})-(\ref{second-layer-focu}) and Lemma \ref{SF-angles}, for any $x\in \tilde{K}$ and $N\in\mathbb{N}^+$ such that $N^{\prime}\leq N\leq 2 N^{\prime}-1$, ones have that: 

{\rm(a.)} $G^N_x\in \mathcal{SF}(\tilde{\tilde{C}}_{x,N},\tilde{\tilde{C}}_{\tilde{F}^N(x),N})$ with the separation index $\tilde{\tilde{\kappa}}_x(N)\geq\frac{1}{12\delta_{\Pi^{P_K}_Q}+6}$;

{\rm(b.)} for any $P_{1x},P_{2x}\in G_k({\rm Int}\tilde{\tilde{C}}_{x,N})$,
$$\begin{aligned}&{\rm In}(\alpha^{\tilde{\tilde{C}}_{\tilde{F}^N(x),N}}(G^N_x P_{1x}, G^N_x P_{2x}))\leq\frac{1-\tilde{\tilde{\kappa}}_x(N)}{1+\tilde{\tilde{\kappa}}_x(N)}\cdot{\rm In}(\alpha^{\tilde{\tilde{C}}_{x,N}}(P_{1x},P_{2x}))\\
&\leq \frac{12\delta_{\Pi_Q^{P_K}}+5}{12\delta_{\Pi_Q^{P_K}}+7}\cdot{\rm In}(\alpha^{\tilde{\tilde{C}}_{x,N}}(P_{1x},P_{2x}));\end{aligned}$$

{\rm(c.)}  for any $P_{1x},P_{2x}\in G_k({\rm Int}C_x)$,

$$\begin{aligned}&d(G^N_x P_{1x},G^N_x P_{2x})\\
&\leq \min\{ (2+\frac{ 2(1+\kappa_x(N))(1+\tilde{\kappa}_x(N))}{(\kappa_x(N)-\tilde{\kappa}_x(N))^2})\cdot{\rm In}(\alpha^{\tilde{\tilde{C}}_{\tilde{F}^N(x),N}}(G^N_xP_{1x},G^N_xP_{2x})),\,2\}\\
&\overset{(\ref{first-layer-focu})\,{\rm and}\,(\ref{second-layer-focu})}{\leq} \min\{ (24(2\delta_{\Pi^{P_K}_Q}+1)(12\delta_{\Pi^{P_K}_Q}+7)+2)\cdot{\rm In}(\alpha^{\tilde{\tilde{C}}_{\tilde{F}^N(x),N}}(G^N_xP_{1x},G^N_xP_{2x})),\,2\};\end{aligned}$$

{\rm(d.)} $\alpha^{\tilde{\tilde{C}}_{\tilde{F}^N(x),N}}(G^N_{x} P_{1x}, G^N_x P_{2x})\leq\frac{1}{(\tilde{\kappa}_x(N))^2}=36(2\delta_{\Pi^{P_K}_Q}+1)^2$ for any $P_{1x}, P_{2x}$
$\in G_k({\rm Int} C_x)$. 
 
\vskip 5mm
\noindent{\bf Step 2. The unique invariant $k$-dimensional bundle w.r.t. $(\tilde{F},\mathcal{G})$ in the cone field.}
\vskip 5mm

Let $\tilde{K}\times (\mathring{P}_x)$ be a $k$-dimensional bundle such that $\mathring{P}_x \subset C_{x}$ for any $x\in \tilde{K}$. It then follows from (\ref{second-layer-focu}) and (a.)-(d.) that for any $x\in \tilde{K}$ and given $N\in\mathbb{N}^+$ such that $N^{\prime}\leq N\leq 2N^{\prime}-1$,

$$\{G^{Nn}_{\tilde{F}^{-Nn}(x)}\mathring{P}_{\tilde{F}^{-Nn}(x)}\}_{n\in\mathbb{N}^+}\subset G_k(\tilde{\tilde{C}}_{x,N})$$ is a cauchy sequence in the completed metric space $G(k,\,X)$; and more, (b.)-(c.) imply that it converges to $\tilde{P}_x\in G_k({\rm Int}\tilde{\tilde{C}}_{x,N})$ uniformly for all $x\in \tilde{K}$. Clearly, 

$$\begin{aligned}&\tilde{P}_{\tilde{F}^{N}(x)}=\lim\limits_{n\rightarrow +\infty} G_x^N\circ G^{N(n-1)}_{\tilde{F}^{-N(n-1)}(x)}\mathring{P}_{\tilde{F}^{-N(n-1)}(x)}\\
&=G_x^N\lim\limits_{n\rightarrow +\infty}G^{N(n-1)}_{\tilde{F}^{-N(n-1)}(x)}\mathring{P}_{\tilde{F}^{-N(n-1)}(x)}\\
&=G_x^N\tilde{P}_{x}\subset{\rm Int}\tilde{\tilde{C}}_{\tilde{F}^{N}(x)}\cup\{0\}\end{aligned}$$ for any $x\in \tilde{K}$ and given $N\in\mathbb{N}^+$ such that $N^{\prime}\leq N\leq 2N^{\prime}-1$. So, $\tilde{K}\times (\tilde{P}_x)$ is a $k$-dimensional bundle such that

\begin{equation}\label{Invar-P}G^{N}_x\tilde{P}_x=\tilde{P}_{\tilde{F}^{N}(x)}\end{equation} for any $x\in \tilde{K}$ and given $N\in\mathbb{N}$ such that $N^{\prime}\leq N\leq 2N^{\prime}-1$. 

Now, we assert that for any given $N\in\mathbb{N}$ such that $N^{\prime}\leq N\leq 2N^{\prime}-1$,

\begin{equation}\label{kd-uni-bundle-invariant-N}\begin{aligned}& \tilde{K}\times (\tilde{P}_x) \,\,{\rm is\,\, the\,\, unique}\,\, k-{\rm dimensional\,\, bundle\,\,such\,\,that}\\
& \tilde{K}\times (\tilde{P}_x)\subset \tilde{K}\times (C_x)\,\,{\rm and}\,\,G^{N}_x\tilde{P}_x=\tilde{P}_{\tilde{F}^{N}(x)}\,\,{\rm for\,\,any}\,\,x\in \tilde{K}.
\end{aligned}\end{equation} Let  $\tilde{K}\times (\mathring{P}^{\prime}_x)$ be a $k$-dimensional bundle such that $\tilde{K}\times (\mathring{P}^{\prime}_x)\subset \tilde{K}\times (C_x)$. By utilizing (\ref{second-layer-focu}) and (a.)-(d.) again, (\ref{Invar-P}) implies that 

$$\begin{aligned}&\lim\limits_{n\rightarrow +\infty}{\rm In}(\alpha^{\tilde{\tilde{C}}_{x,N}}(\tilde{P}_x,\,G^{Nn}_{F^{-Nn}_{\delta}(x)}\mathring{P}^{\prime}_{F^{-Nn}_{\delta}(x)} ))=0\\
&{\rm and}\\
&\lim\limits_{n\rightarrow +\infty} G^{Nn}_{F^{-Nn}_{\delta}(x)}\mathring{P}^{\prime}_{F^{-Nn}_{\delta}(x)}=\tilde{P}_x\end{aligned}$$ for any $x\in K$. It implies that if $\tilde{K}\times (\mathring{P}^{\prime}_x)$ satisfies $G^{N}_x\mathring{P}^{\prime}_x=\mathring{P}^{\prime}_{\tilde{F}^{N}(x)}$ for any $x\in \tilde{K}$, then $\mathring{P}^{\prime}_x=\tilde{P}_x$ for any $x\in \tilde{K}$. Thus, we have proved the assertion.

Note that $G_x^{N^{\prime}\cdot (N^{\prime}+1)}=G_{\tilde{F}^{(N^{\prime})^2}(x)}^{N^{\prime}}\circ\cdots\circ G_x^{N^{\prime}}$ for any $x\in \tilde{K}$. By taking value of $N$  in (\ref{kd-uni-bundle-invariant-N}) as the certain $N^{\prime}$, (\ref{kd-uni-bundle-invariant-N}) and the same arguments in the assertion above imply that 

\begin{equation}\label{kd-uni-bundle-invariant-Np}\begin{aligned}& \tilde{K}\times (\tilde{P}_x) \,\,{\rm is\,\,also\,\, the\,\, unique\,\, k-dimensional\,\, bundle\,\,such\,\,that}\\
& \tilde{K}\times (\tilde{P}_x)\subset \tilde{K}\times (C_x)\,\,{\rm and}\,\,G^{N^{\prime}\cdot(N^{\prime}+1)}_x\tilde{P}_x=\tilde{P}_{\tilde{F}^{N^{\prime}\cdot(N^{\prime}+1)}(x)}\,\,{\rm for\,\,any}\,\,x\in \tilde{K}.\end{aligned}\end{equation} Clearly, 

$$G^{N^{\prime}+1}_x\tilde{P}_x=G_{\tilde{F}^{N^{\prime}}(x)}\circ G^{N^{\prime}}_x\tilde{P}_x=G_{\tilde{F}^{N^{\prime}}(x)}\tilde{P}_{\tilde{F}^{N^{\prime}}(x)}$$ for any $x\in \tilde{K}$. Recall that $\tilde{F}$ is a homoemorphism on $\tilde{K}$ and hence, it is a bijection on $\tilde{K}$. Together with (\ref{kd-uni-bundle-invariant-Np}) and considering the value of $N$ in (\ref{kd-uni-bundle-invariant-N}) being $N^{\prime}+1$, one has that 

\begin{equation}\label{kd-uni-bundle-invariant}\begin{aligned}& \tilde{K}\times (\tilde{P}_x) \,\,{\rm is\,\, the\,\, unique\,\, k-dimensional\,\, bundle\,\,such\,\,that}\\
& \tilde{K}\times (\tilde{P}_x)\subset \tilde{K}\times (C_x)\,\,{\rm and}\,\,G_x\tilde{P}_x=\tilde{P}_{\tilde{F}(x)}\,\,{\rm for\,\,any}\,\,x\in \tilde{K}.\end{aligned}\end{equation} It then follows from (\ref{second-layer-focu}) and (\ref{kd-uni-bundle-invariant}) that there is a $\delta_c\in [\frac{1}{12\delta_{\Pi_{Q}^{P_K}}+7},\frac{1}{12\delta_{\Pi_{Q}^{P_K}}+6})$ such that

\begin{equation}\label{tP-inner}\tilde{P}_x\cap S+B_{\delta_c}(0)\subset {\rm Int} \tilde{\tilde{C}}_x\subset  {\rm Int}C_x\cup \{0\} \end{equation} for any $x\in\tilde{K}$.

\vskip 5mm
\noindent{\bf Step 3. The correction maps for the positive invariant $k$-codimensional bundle.}
\vskip  5mm

Recall that $Q_{h(x)}\setminus\{0\}\subset X\setminus C_x$ is a $k$-codimensional subspace of $X$. (\ref{tP-inner}) implies that $\tilde{P}_x\cap Q_{h(x)}=\{0\}$ and hence, 

\begin{equation}\label{tP-Qh-dsum} X=\tilde{P}_x\oplus Q_{h(x)}\,\, {\rm for\,\, any}\,\, x\in \tilde{K}.\end{equation} Let $\Pi^{\tilde{P}_x}_{Q_h}$ be the natural projection onto $\tilde{P}_x$ along $Q_{h(x)}$, and $\Pi^{Q_{h(x)}}_{\tilde{P}}={\rm Id}-\Pi^{\tilde{P}_x}_{Q_h}$ for any $x\in \tilde{K}$. By virtue of (\ref{first-layer-focu}) and (\ref{kd-uni-bundle-invariant})-(\ref{tP-inner}), one has that 

$$1=\norm{v}=\norm{\Pi^{\tilde{P}_x}_{Q_h}v+ \Pi_{\tilde{P}}^{Q_h(x)}v}\geq\kappa_{\tilde{F}^{-N^{\prime}}(x)}\cdot \norm{\Pi^{\tilde{P}_x}_{Q_h}v}\geq \frac{\norm{\Pi^{\tilde{P}_x}_{Q_h}v}}{6\delta_{\Pi_{Q}^{P_K}}+3}$$ for any $x\in\tilde{K}$ and $v\in S$. As a consequence,

$$\norm{\Pi_{Q_h}^{\tilde{P}_x}}_{L(X)}\leq 6\delta_{\Pi_{Q}^{P_K}}+3$$ for any $x\in \tilde{K}$. Then,

$$\delta_{\Pi_{Q_h}^{\tilde{P}_K}}=\sup\limits_{x\in \tilde{K}}\{\norm{\Pi_{Q_h}^{\tilde{P}_x}}_{L(X)} \} \in (0,\,\, 6\delta_{\Pi_{Q}^{P_K}}+3].$$ It follows from (\ref{kd-uni-bundle-invariant}) and $\tilde{P}_x$ being $k$-dimensional that $G_{x}\mid_{\tilde{P}_x}$ is bijective for any $x\in \tilde{K}$. Together with (\ref{G-contraction-delta})-(\ref{G-contraction-delta-coro}), one has that for any $x\in \tilde{K}$, $v\in\tilde{P}_x\cap S$ and $N\in \mathbb{N}^+$ such that $N^{\prime}\leq N\leq 2N^{\prime}-1$, 

$$\begin{aligned}\norm{G_x^N v}&\geq\norm{\Pi^{P_{h(\tilde{F}^N(x))}}_Q G_x^N v}-\norm{\Pi^{Q_{h(\tilde{F}^N(x))}}_P G_x^N v}\\
&\geq\frac{1}{2}\norm{\Pi^{P_{h(\tilde{F}^N(x))}}_Q G_x^N v}\geq \frac{\delta^N_{normp}}{24}.\end{aligned}$$ It then follows that for any $x\in \tilde{K}$, $v\in\tilde{P}_x\cap S$, 

$$\begin{aligned}&\delta_{norm}^{N^{\prime}}\norm{G_x v}\geq \norm{G_x^{N^{\prime}+1} v}\geq \frac{\delta^{N^{\prime}+1}_{normp}}{24}\\
&{\rm and \,\,hence,}\\
&\norm{G_x v}\geq \frac{\delta^{N^{\prime}+1}_{normp}}{24\delta_{norm}^{N^{\prime}}}.
\end{aligned}$$ So, ones have that

\begin{equation}\label{inverse-G-upb}\begin{aligned}&\delta_{G^{-1}_{M.}}=\sup\limits_{x\in \tilde{K}}\{ \norm{( G^{N^{\prime}}_{x}\mid_{\tilde{P}_x})^{-1} }_{L(X)}\}\in(0, \frac{24}{\delta^{N^{\prime}}_{normp}}]\\
&{\rm and}\\
&\delta_{G^{-1}_{R.}}=\sup\limits_{x\in \tilde{K},r\in\mathbb{N},0\leq r\leq N^{\prime}-1}\{ \norm{( G^{r}_{x}\mid_{\tilde{P}_x})^{-1} }_{L(X)}\}\\   
&\quad\quad\quad\in(0, \max\limits_{r\in\mathbb{N},0\leq r\leq N^{\prime}-1}\{(\frac{24\delta_{norm}^{N^{\prime}}}{\delta^{N^{\prime}+1}_{normp}})^r \}].
\end{aligned}\end{equation} For any $x\in \tilde{K}$, define the following correction map:

\begin{equation}\label{derivation}\begin{aligned} \Psi_x&=\sum_{N=0}^{+\infty} (G^{N+1}_x\mid_{\tilde{P}_x})^{-1}\circ\Pi^{\tilde{P}_{\tilde{F}^{N+1}(x)}}_{Q_h} \circ G_{\tilde{F}^{N}(x)}\circ \Pi^{Q_{h(\tilde{F}^{N}(x))}}_{\tilde{P}}\circ G^{N}_{x}\circ\Pi^{Q_{h(x)}}_{\tilde{P}}.
\end{aligned}
\end{equation} It then follows from (\ref{kd-uni-bundle-invariant}) that  for any $x\in\tilde{K}$, $\Psi_x$ has the following equivalent forms:

\begin{equation}\label{derivation-v}\begin{aligned}&\Psi_x=\sum_{n=0}^{+\infty}\sum_{r=0}^{N^{\prime}-1} (G^{nN^{\prime}+r+1}_x\mid_{\tilde{P}_x})^{-1}\circ\Pi^{\tilde{P}_{\tilde{F}^{nN^{\prime}+r+1}(x)}}_{Q_h} \circ G_{\tilde{F}^{nN^{\prime}+r}(x)}\circ \Pi^{Q_{h(\tilde{F}^{nN^{\prime}+r}(x))}}_{\tilde{P}}\circ\\
&\quad\quad\quad\quad\quad  G^{nN^{\prime}+r}_{x}\circ\Pi^{Q_{h(x)}}_{\tilde{P}}\\
&=\sum_{n=0}^{+\infty}\sum_{r=0}^{N^{\prime}-1}(G_x^r\mid_{\tilde{P}_x})^{-1}\circ(G^{nN^{\prime}}_{\tilde{F}^r(x)}\mid_{\tilde{P}_{\tilde{F}^r(x)}})^{-1}\circ(G_{    \tilde{F}^{nN^{\prime}+r}(x)  }   \mid_{    \tilde{P}_{\tilde{F}^{nN^{\prime}+r}(x)}    })^{-1}\circ\\
&\quad\quad\quad\quad\quad\Pi^{\tilde{P}_{\tilde{F}^{nN^{\prime}+r+1}(x)}}_{Q_h} \circ G_{\tilde{F}^{nN^{\prime}+r}(x)}\circ \Pi^{Q_{h(\tilde{F}^{nN^{\prime}+r}(x))}}_{\tilde{P}}\circ\\
&\quad\quad\quad\quad\quad G^{nN^{\prime}}_{\tilde{F}^r(x)}\circ \Pi_{\tilde{P}}^{Q_{h(\tilde{F}^r(x))}}\circ G_x^r \circ\Pi^{Q_{h(x)}}_{\tilde{P}}.
\end{aligned}\end{equation}

Now, we assert that $\Psi_x$ is well-defined for any $x\in \tilde{K}$. By F. Riesz Lemma on Banach spaces, for any given $x\in \tilde{K}$, there is a basis of $\tilde{P}_x$, denoted by $\{u_{x_{\tilde{p}_1}}, u_{x_{\tilde{p}_2}},\cdots,u_{x_{\tilde{p}_k}}\}$, such that $\norm{u_{x_{\tilde{p}_i}}}=1$ for any $i\in\{1,2,\cdots,k\}$, and $d_{usual}(u_{x_{\tilde{p}_{i+1}}}, {\rm Span}\{u_{x_{\tilde{p}_1}},\cdots,u_{x_{\tilde{p}_i}}\})>\frac{3}{4}$ for any $i\in\{1,2,\cdots,k-1\}$, where $d_{usual}(x,X_0)=\inf\limits_{y\in X_0}\{\norm{x-y}\}$ for any nonempty proper closed subspace $X_0$ of $X$. Denoted by $Y_{-l}={\rm Span}_{ i\in\{1,2,\cdots,k\}\setminus\{l\} }\{u_{x_{\tilde{p}_i}}\}$ for any $l\in\{1,2,\cdots,k\}$. One has that 

$$d_{usual}(u_{x_{\tilde{p}_l}}, Y_{-l})\geq\frac{3}{4}\cdot(\frac{3}{7})^{k-l} \,\,{\rm for}\,\, l\in\{2,\,3,\,\cdots,k\}\,\,{\rm and} \,\,d_{usual}(u_{x_{\tilde{p}_1}}, Y_{-1})\geq(\frac{3}{7})^{k-1}.$$ Hence, 

\begin{equation}\label{dx} d_x=\inf\limits_{l\in \{1,2,\cdots,k\}}d_{usual}(u_{x_{\tilde{p}_l}}, Y_{-l})\geq(\frac{3}{7})^{k-1}\end{equation} for any given $x\in \tilde{K}$. 

Let $\delta_c$ be the constant in (\ref{tP-inner}). Let $\delta_{\tilde{c}}=\frac{\delta_c\cdot d_x}{k}$ and $\tilde{P}^{\prime}_x={\rm Span}\{v_{x_{\tilde{p}_1}},v_{x_{\tilde{p}_2}},\cdots,v_{x_{\tilde{p}_k}}\}$ for any $x\in \tilde{K}$ such that $v_{x_{\tilde{p}_i}}\in u_{x_{\tilde{p}_i}}+B_{\delta_{\tilde{c}}}(0),\,i\in\{1,2,\cdots,k\}$, and it then follows that $\tilde{P}^{\prime}_x$ is a $k$-dimensional subspace such that $\tilde{P}^{\prime}_x\setminus\{0\}\subset{\rm Int}\tilde{\tilde{C}}_x$ and $d_{usual}(u_{\tilde{P}_x},\tilde{P}^{\prime}_x)\leq \delta_c$ for any $\tilde{u}_{\tilde{P}_x}\in \tilde{P}_x\cap S$. For such a $k$-dimensional subspace $\tilde{P}^{\prime}_x$ with $x\in \tilde{K}$, we call that

$$\tilde{P}^{\prime}_x {\rm \,\,with\,\, the\,\, property}\,\, (\tilde{P}^{\prime}).$$ Let $\delta_{\tilde{\tilde{c}}_x}\in(0,\frac{\delta_{c}\cdot d_x }{k^2})\cap(0, \frac{\delta_c\cdot d_x}{6k})$ for any $x\in \tilde{K}$. For any given $u_{\tilde{P}_x}=\sum_{i=1}^{k}a_i\cdot u_{x_{\tilde{p}_i}}\in \tilde{P}_x\cap S$, $u_Q\in Q_{h(x)}\cap S$ and $x\in \tilde{K}$, one has that the $k$-dimensional space

$$\tilde{P}^{\prime\prime}_x={\rm Span}\{u_{x_{\tilde{p}_1}},\cdots,u_{x_{\tilde{p}_l}}+\frac{\delta_{\tilde{\tilde{c}}_x}}{a_l}u_Q,\cdots,u_{x_{\tilde{p}_k}}\}$$ with the property $(\tilde{P}^{\prime})$ such that 

$$u_{\tilde{P}_x}+\delta_{\tilde{\tilde{c}}_x}u_Q\in \tilde{P}^{\prime\prime}_x,$$ where $l\in\{1,2,\cdots,k\}$ is the chosen index such that $\mid a_l\mid\geq\frac{1}{k}$ (Since $\norm{u_{\tilde{P}_x}}=1$, such a $l$ must exist.). It is clear that $\abs{a_i}\cdot d_x\leq\norm{u_{\tilde{P}_x}}=1$ and hence, $\abs{a_i}\leq \frac{1}{d_x}$ for any $i\in\{1,2,\cdots,k\}$. Let 

$$\tilde{\tilde{P}}^{\prime\prime}_x=\{\frac{u_{x_{\tilde{p}_l}}+\frac{\delta_{\tilde{\tilde{c}}_x}}{a_l}u_Q+ \sum_{i\in\{1,2,\cdots,k\}\setminus\{l\}}  b_i u_{x_{\tilde{p}_i}}}{\norm{u_{x_{\tilde{p}_l}}+\frac{\delta_{\tilde{\tilde{c}}_x}}{a_l}u_Q+ \sum_{i\in\{1,2,\cdots,k\}\setminus\{l\}}  b_i u_{x_{\tilde{p}_i}}}}: b_i\in\mathbb{R}\,\, {\rm for\,\, any}\,\, i\in \{1,2,\cdots,k\}\setminus\{l\}\}.$$ Then, 

$$\alpha^{\tilde{\tilde{C}}_x}(\tilde{P}_x,\tilde{P}^{\prime\prime}_x)=\sup\limits_{u_{x_{\tilde{p}}}\in\tilde{P}_x\cap S,\,\,u^{\prime\prime}_{x_{\tilde{\tilde{p}}}}\in\tilde{\tilde{P}}^{\prime\prime}_x\cup(-\tilde{\tilde{P}}^{\prime\prime}_x)} \alpha^{\tilde{\tilde{C}}_x}(u_{x_{\tilde{p}}}, u^{\prime\prime}_{x_{\tilde{\tilde{p}}}}).$$ Denote $u_{x\tilde{\tilde{p}}_1}=\frac{u_{x_{\tilde{p}_l}}+ \sum_{i\in\{1,2,\cdots,k\}\setminus\{l\}}  b_i u_{x_{\tilde{p}_i}}}{\norm{u_{x_{\tilde{p}_l}}+\frac{\delta_{\tilde{\tilde{c}}_x}}{a_l}u_Q+ \sum_{i\in\{1,2,\cdots,k\}\setminus\{l\}}  b_i u_{x_{\tilde{p}_i}}}}$ and $u_{x\tilde{\tilde{p}}_2}=  \frac{  \frac{\delta_{\tilde{\tilde{c}}_x}}{a_l}u_Q }{\norm{u_{x_{\tilde{p}_l}}+\frac{\delta_{\tilde{\tilde{c}}_x}}{a_l}u_Q+ \sum_{i\in\{1,2,\cdots,k\}\setminus\{l\}}  b_i u_{x_{\tilde{p}_i}}}}$. Then, 

$$u^{\prime\prime}_{x_{\tilde{\tilde{p}}}}=u_{x\tilde{\tilde{p}}_1}+u_{x\tilde{\tilde{p}}_2}\,\,{\rm or}\,\,-u_{x\tilde{\tilde{p}}_1}-u_{x\tilde{\tilde{p}}_2}.$$ One has that $\norm{ u_{x_{\tilde{p}_l}}+\sum_{i\in\{1,2,\cdots,k\}\setminus\{l\}}  b_i u_{x_{\tilde{p}_i}} }\geq d_{usual}(u_{x_{\tilde{p}_l}}, Y_{-l})\geq d_x$ for any $b_i\in\mathbb{R},\, i\in\{1,2,\cdots,k\}\setminus\{l\}$. It then follows from $\frac{\delta_{\tilde{\tilde{c}}_x} }{\abs{a_l}}<\frac{\delta_c\cdot d_x}{6k}\cdot k=\frac{\delta_c\cdot d_x}{6}<\frac{d_x}{2}$ that 

$$\norm{u_{x\tilde{\tilde{p}}_1}}\geq \frac{2}{3}.$$ Furthermore, together with $\delta_{\tilde{\tilde{c}}_x}\in(0,\frac{\delta_c d_x}{6k})$ and $\abs{a_l}\geq\frac{1}{k}$, ones have that

$$\begin{aligned}&\alpha^{\tilde{\tilde{C}}_x}_0(u_{x_{\tilde{p}}}, u^{\prime\prime}_{x_{\tilde{\tilde{p}}}})\leq \frac{\delta_c}{\norm{u_{x\tilde{\tilde{p}}_1}}\cdot \delta_c-\frac{2\delta_{\tilde{\tilde{c}}_x}}{\abs{a_l}\cdot d_x}}\leq \frac{\delta_c}{\frac{2\delta_c}{3}-\frac{2\delta_{\tilde{\tilde{c}}_x}}{\abs{a_l}\cdot d_x}}\\
&{\rm and}\\
&\alpha^{\tilde{\tilde{C}}_x}_0(u^{\prime\prime}_{x_{\tilde{\tilde{p}}}}, u_{x_{\tilde{p}}})\leq \frac{2\delta_{\tilde{\tilde{c}}_x}}{\delta_c\abs{a_l} d_x}+\norm{u_{x\tilde{\tilde{p}}_1}} \leq\frac{2\delta_{\tilde{\tilde{c}}_x}}{\delta_c\abs{a_l} d_x}+2.\end{aligned}$$ It yields that 

$$ \alpha^{\tilde{\tilde{C}}_x}(u^{\prime\prime}_{x_{\tilde{\tilde{p}}}}, u_{x_{\tilde{p}}} )-1\leq\frac{4\delta_{\tilde{\tilde{c}}_x}}{ \norm{u_{x\tilde{\tilde{p}}_1}}\cdot\delta_c\cdot\abs{a_l}d_x-2\delta_{\tilde{\tilde{c}}_x}}\leq \frac{6\delta_{\tilde{\tilde{c}}_x}}{ \delta_c\cdot\abs{a_l}d_x-3\delta_{\tilde{\tilde{c}}_x}}\leq\frac{12\delta_{\tilde{\tilde{c}}_x}}{\delta_c\cdot\abs{a_l}d_x}\leq\frac{12k\delta_{\tilde{\tilde{c}}_x}}{\delta_cd_x} $$ for any $x\in \tilde{K}$ and $u_{x_{\tilde{p}}}\in\tilde{P}_x\cap S,\,\,u^{\prime\prime}_{x_{\tilde{\tilde{p}}}}\in\tilde{\tilde{P}}^{\prime\prime}_x\cup(-\tilde{\tilde{P}}^{\prime\prime}_x)$. Hence,

\begin{equation}\label{PPprime-up-esti}{\rm In}(\alpha^{\tilde{\tilde{C}}_x}(\tilde{P}_x,\tilde{P}^{\prime\prime}_x)) \leq \alpha^{\tilde{\tilde{C}}_x}(\tilde{P}_x,\tilde{P}^{\prime\prime}_x)-1\leq\frac{12k}{\delta_cd_x}\cdot\delta_{\tilde{\tilde{c}}_x}\end{equation} for any $x\in \tilde{K}$. Let 

$$\tilde{M}_0=24(2\delta_{\Pi_Q^{P_K}}+1)(12\delta_{\Pi_Q^{P_K}}+7)+2.$$ It then follows that

$$\begin{aligned}&\norm{\frac{G^{N^{\prime}n}_{x}(u_{\tilde{P}_x}+\delta_{\tilde{\tilde{c}}_x} u_Q)}{\norm{G^{N^{\prime}n}_{x}(u_{\tilde{P}_x}+\delta_{\tilde{\tilde{c}}_x} u_Q)}}-\frac{G^{N^{\prime}n}_{x}u_{\tilde{P}_x}+\delta_{\tilde{\tilde{c}}_x  }\Pi_{Q_h}^{\tilde{P}_{ \tilde{F}^{N^{\prime}n}(x) }}G_x^{N^{\prime}n}u_{Q}}{\norm{ G^{N^{\prime}n}_{x}u_{\tilde{P}_x}+\delta_{\tilde{\tilde{c}}_x}\Pi_{Q_h}^{\tilde{P}_{\tilde{F}^{N^{\prime}n}(x) }}G_x^{N^{\prime}n}u_{Q}}}}\\
\overset{(\ref{point-dist-num})+(\ref{second-layer-focu})}{\leq}&\min\{\frac{2 (1+\kappa_{\tilde{F}^{N^{\prime}(n-1)}(x) })(1+\frac{1}{12\delta_{\Pi_Q^{P_K}}+6})}{(\kappa_{\tilde{F}^{N^{\prime}(n-1)}(x)}-\frac{1}{12\delta_{\Pi_Q^{P_K}}+6})^2}\cdot(\alpha^{ \tilde{\tilde{C}}_{\tilde{F}^{N^{\prime}n}(x)} }(G^{N^{\prime}n}_x\tilde{P}_{x}, G^{N^{\prime}n}_x\tilde{P}^{\prime\prime}_{x}) -1),2\}\\
\overset{(\ref{first-layer-focu})}{\leq}&\min\{\tilde{M}_0\cdot{\rm In}(\alpha^{ \tilde{\tilde{C}}_{\tilde{F}^{N^{\prime}n}(x)} }(G^{N^{\prime}n}_x\tilde{P}_{x}, G^{N^{\prime}n}_x\tilde{P}^{\prime\prime}_{x})), 2\}\\
\overset{(b.)}{\leq}&\min\{\tilde{M}_0\cdot (\prod_{i=0}^{n-1}\frac{1-\tilde{\tilde{\kappa}}_{\tilde{F}^{N^{\prime}i}(x)}}{1+\tilde{\tilde{\kappa}}_{\tilde{F}^{N^{\prime}i}(x)}})\cdot {\rm In}(\alpha^{\tilde{\tilde{C}}_x}(\tilde{P}_x,\tilde{P}^{\prime\prime}_x)),\,2\}\\
\leq &\min\{\tilde{M}_0\cdot(\frac{12\delta_{\Pi^{P_K}_Q}+5}{12\delta_{\Pi^{P_K}_Q}+7})^n\cdot {\rm In}(\alpha^{\tilde{\tilde{C}}_x}(\tilde{P}_x,\tilde{P}^{\prime\prime}_x)),\,2\}\\
\overset{(\ref{PPprime-up-esti})}{\leq} &\min\{\tilde{M}_0 \delta_{\tilde{\tilde{c}}_x}\cdot \frac{12k}{\delta_cd_x}\cdot(\frac{12\delta_{\Pi^{P_K}_Q}+5}{12\delta_{\Pi^{P_K}_Q}+7})^n,\,2\}\\
\overset{(\ref{dx})}{\leq} &\min\{\frac{12k\tilde{M}_0 \delta_{\tilde{\tilde{c}}_x}}{\delta_c}\cdot (\frac{7}{3})^{k-1}\cdot(\frac{12\delta_{\Pi^{P_K}_Q}+5}{12\delta_{\Pi^{P_K}_Q}+7})^n,\,2\}
\end{aligned}$$ for any $n\in \mathbb{N}^+$ and $x\in \tilde{K}$. Note that $\delta_{\tilde{\tilde{c}}_x}\in (0,\frac{\delta_c\cdot d_x}{k}\cdot\min\{\frac{1}{6},\,\frac{1}{k}\})$ and $d_x\in[(\frac{3}{7})^{k-1},1]$ for any $x\in \tilde{K}$. Thus, there is a positive constant, denoted by $\tilde{M}>0$, such that 

\begin{equation}\label{GPQnbound}\norm{\frac{G^{N^{\prime}n}_{x}u_{\tilde{P}_x}+\delta_{\tilde{\tilde{c}}_x}\Pi_{Q_h}^{\tilde{P}_{\tilde{F}^{N^{\prime}n}(x)}}G_x^{N^{\prime}n}u_{Q}}{\norm{ G^{N^{\prime}n}_{x}u_{\tilde{P}_x}+\delta_{\tilde{\tilde{c}}_x}\Pi_{Q_h}^{\tilde{P}_{\tilde{F}^{N^{\prime}n}(x) }}G_x^{N^{\prime}n}u_{Q}}}-\frac{G^{N^{\prime}n}_{x}(u_{\tilde{P}_x}+\delta_{\tilde{\tilde{c}}_x} u_Q)}{\norm{G^{N^{\prime}n}_{x}(u_{\tilde{P}_x}+\delta_{\tilde{\tilde{c}}_x} u_Q)}}}\leq \frac{12k\tilde{M} \delta_{\tilde{\tilde{c}}_x}}{\delta_c}\cdot (\frac{7}{3})^{k-1}\cdot(\frac{12\delta_{\Pi^{P_K}_Q}+5}{12\delta_{\Pi^{P_K}_Q}+7})^n
\end{equation} for any $n\in\mathbb{N}^+$ and $x\in \tilde{K}$. Note that

$$\begin{aligned}&\delta_{\tilde{\tilde{c}}_x}\cdot \norm{ \frac{\Pi^{Q_{h(\tilde{F}^{N^{\prime}n}(x))}}_{\tilde{P}} G^{N^{\prime}n}_{x} u_Q}{\norm{G^{N^{\prime}n}_{x}(u_{\tilde{P}_x}+\delta_{\tilde{\tilde{c}}_x} u_Q)}}}=\norm{\Pi^{Q_{h(\tilde{F}^{N^{\prime}n}(x))}}_{\tilde{P}}(\frac{G^{N^{\prime}n}_{x}u_{\tilde{P}_x}+\delta_{\tilde{\tilde{c}}_x}\Pi_{Q_h}^{\tilde{P}_{\tilde{F}^{N^{\prime}n}(x)}}G_x^{N^{\prime}n}u_{Q}}{\norm{ G^{N^{\prime}n}_{x}u_{\tilde{P}_x}+\delta_{\tilde{\tilde{c}}_x}\Pi_{Q_h}^{\tilde{P}_{\tilde{F}^{N^{\prime}n}(x) }}G_x^{N^{\prime}n}u_{Q}}}-\frac{G^{N^{\prime}n}_{x}(u_{\tilde{P}_x}+\delta_{\tilde{\tilde{c}}_x} u_Q)}{\norm{G^{N^{\prime}n}_{x}(u_{\tilde{P}_x}+\delta_{\tilde{\tilde{c}}_x} u_Q)}})}\\
\leq&(1+\delta_{\Pi^{\tilde{P}_K}_{Q_h}})\cdot\norm{\frac{G^{N^{\prime}n}_{x}u_{\tilde{P}_x}+\delta_{\tilde{\tilde{c}}_x}\Pi_{Q_h}^{\tilde{P}_{\tilde{F}^{N^{\prime}n}(x)}}G_x^{N^{\prime}n}u_{Q}}{\norm{ G^{N^{\prime}n}_{x}u_{\tilde{P}_x}+\delta_{\tilde{\tilde{c}}_x}\Pi_{Q_h}^{\tilde{P}_{\tilde{F}^{N^{\prime}n}(x) }}G_x^{N^{\prime}n}u_{Q}}}-\frac{G^{N^{\prime}n}_{x}(u_{\tilde{P}_x}+\delta_{\tilde{\tilde{c}}_x} u_Q)}{\norm{G^{N^{\prime}n}_{x}(u_{\tilde{P}_x}+\delta_{\tilde{\tilde{c}}_x} u_Q)}}}\end{aligned}$$ for any $n\in\mathbb{N}^+$ and $x\in \tilde{K}$. Thus,

$$\norm{\Pi^{Q_{h(\tilde{F}^{N^{\prime}n}(x))}}_{\tilde{P}} G^{N^{\prime}n}_{x} u_Q}\leq  \frac{12k\tilde{M}\cdot (1+\delta_{\Pi^{\tilde{P}_K}_{Q_h}})}{\delta_c}\cdot (\frac{7}{3})^{k-1}\cdot(\frac{12\delta_{\Pi^{P_K}_Q}+5}{12\delta_{\Pi^{P_K}_Q}+7})^n\cdot \norm{G^{N^{\prime}n}_{x}(u_{\tilde{P}_x}+\delta_{\tilde{\tilde{c}}_x} u_Q)}$$ for any $n\in\mathbb{N}^+$ and $x\in \tilde{K}$ with $\delta_{\tilde{\tilde{c}}_x}\in(0, \frac{\delta_c d_x}{k}\cdot\min\{\frac{1}{6},\,\frac{1}{k}\})$. Let 

$$\delta_{\tilde{\tilde{c}}_x}\rightarrow 0\quad {\rm and} \quad\tilde{M}^{\prime}=(\frac{7}{3})^{k-1}\cdot\frac{12k\tilde{M}\cdot (1+\delta_{\Pi^{\tilde{P}_K}_{Q_h}})}{\delta_c}.$$ Then, 

$$\norm{\Pi^{Q_{h(\tilde{F}^{N^{\prime}n}(x))}}_{\tilde{P}} G^{N^{\prime}n}_{x} u_Q}\leq \tilde{M}^{\prime}\cdot(\frac{12\delta_{\Pi^{P_K}_Q}+5}{12\delta_{\Pi^{P_K}_Q}+7})^n \cdot\norm{G^{N^{\prime}n}_{x}u_{\tilde{P}_x}}$$ for any $n\in\mathbb{N}^+$ and $x\in \tilde{K}$. Together with the arbitrariness of $u_{\tilde{P}_x}\in \tilde{P}_x\cap S,\,u_Q\in Q_{h(x)}\cap S$, one has that 

\begin{equation}\label{Separation-G-tilde}\norm{\Pi^{Q_{h(\tilde{F}^{N^{\prime}n}(x))}}_{\tilde{P}} G^{N^{\prime}n}_{x}\mid_{Q_{h(x)}}}_{L(X)}\cdot\norm{  (G^{N^{\prime}n}_{x}\mid_{\tilde{P}_x})^{-1} }_{L(X)}\leq  \tilde{M}^{\prime} \cdot(\frac{12\delta_{\Pi^{P_K}_Q}+5}{12\delta_{\Pi^{P_K}_Q}+7})^n\end{equation} for any $n\in \mathbb{N}^+$ and $x\in \tilde{K}$. Let $\tilde{\tilde{M}}=\max\{\tilde{M}^{\prime},1\}$. Combinating with (\ref{derivation})-(\ref{derivation-v}), one has that

\begin{equation}\label{Covergence-norm-Psi}\begin{aligned}& \norm{\Psi_x}_{L(X)}\leq \sum\limits_{n=0}^{+\infty}\big(\tilde{\tilde{M}}\cdot\delta_{norm}\cdot(\delta_{G_{R.}^{-1}})^2\cdot(1+\delta_{\Pi_{Q_h}}^{\tilde{P}_K})^2\cdot\delta_{\Pi_{Q_h}}^{\tilde{P}_K}\cdot(\frac{12\delta_{\Pi^{P_K}_{Q}}+5}{12\delta_{\Pi^{P_K}_{Q}}+7})^n\cdot(\sum\limits_{r=0}^{N^{\prime}-1}(\delta_{norm})^r)\big)\\
&=\sum\limits_{n=0}^{+\infty}\tilde{\tilde{M}}\cdot\tilde{\tilde{M}}^{\prime} \cdot(\delta_{G_{R.}^{-1}})^2\cdot(1+\delta_{\Pi_{Q_h}}^{\tilde{P}_K})^2\cdot\delta_{\Pi_{Q_h}}^{\tilde{P}_K}\cdot(\frac{12\delta_{\Pi^{P_K}_{Q}}+5}{12\delta_{\Pi^{P_K}_{Q}}+7})^n\\
&=M_4\end{aligned}
\end{equation} for any $x\in \tilde{K}$, where $\tilde{\tilde{M}}^{\prime}=\begin{cases}\frac{\delta_{norm}(1-(\delta_{norm})^{N^{\prime}})} {1-\delta_{norm}},& {\rm if} \,\,\delta_{norm}\neq 1\\ N^{\prime}, & {\rm if}\,\,\delta_{norm}=1\end{cases}$, $\tilde{\tilde{\tilde{M}}}=\frac{\tilde{\tilde{M}}\cdot\tilde{\tilde{M}}^{\prime}}{2}\cdot(\delta_{G_{R.}^{-1}})^2\cdot(1+\delta_{\Pi_{Q_h}}^{\tilde{P}_K})^2\cdot\delta_{\Pi_{Q_h}}^{\tilde{P}_K}$ and $M_4=\tilde{\tilde{\tilde{M}}}\cdot(12\delta_{\Pi^{P_K}_{Q}}+7)$. 

\vskip 3mm
Thus, $\Psi_x$ is well-defined for any $x\in \tilde{K}$ and we proved this assertion. 

\vskip 5mm
\noindent{\bf Step 4. The certain positive invariant $k$-codimensional bundle w.r.t. $(\tilde{F},\mathcal{G})$.}
\vskip 5mm

We further assert that $G_{x}\Psi_x(u)=\Pi^{\tilde{P}_{\tilde{F}(x)}}_{Q_h}\circ G_x u+\Psi_{\tilde{F}(x)}(G_x u)$ for any $u\in Q_{h(x)}$ and $x\in \tilde{K}$. By (\ref{Covergence-norm-Psi}) and the boundness of $G_x,\,x\in \tilde{K}$, one has that 

$$G_{x}\Psi_x=\sum\limits_{N=0}^{+\infty}G_{x} \circ(G^{N+1}_x\mid_{\tilde{P}_x})^{-1}\circ\Pi^{\tilde{P}_{\tilde{F}^{N+1}(x)}}_{Q_h} \circ G_{\tilde{F}^{N}(x)}\circ \Pi^{Q_{h(\tilde{F}^{N}(x))}}_{\tilde{P}}\circ G^{N}_{x}\circ\Pi^{Q_{h(x)}}_{\tilde{P}}$$ for any $x\in \tilde{K}$. It then follows from (\ref{kd-uni-bundle-invariant}) that for any $u\in Q_{h(x)}$ and $x\in \tilde{K}$,

$$\begin{aligned}&G_{x}\Psi_x (u)=\Pi^{\tilde{P}_{\tilde{F}(x)}}_{Q_h}\circ G_x u+\\
&\sum_{N=0}^{+\infty}(G^{N+1}_{\tilde{F}(x)}\mid_{\tilde{P}_{\tilde{F}(x)}})^{-1}\circ\Pi^{\tilde{P}_{\tilde{F}^{N+1}(\tilde{F}(x))}}_{Q_h} \circ G_{\tilde{F}^{N}(\tilde{F}(x))}\circ \Pi^{Q_{h(\tilde{F}^{N}(\tilde{F}(x)))}}_{\tilde{P}}\circ G^{N}_{\tilde{F}(x)}\circ\Pi^{Q_{h(\tilde{F}(x))}}_{\tilde{P}}G_x u\\
&=\Pi^{\tilde{P}_{\tilde{F}(x)}}_{Q_h}\circ G_x u+\Psi_{\tilde{F}(x)}(G_x u).\end{aligned}$$ Thus, we have proved the assertion.

Let 

\begin{equation}\label{Def-tiQ}\tilde{Q}_x=\{u-\Psi_x(u):\,u\in Q_{h(x)}\}\end{equation} for any $x\in \tilde{K}$. For any $u-\Psi_x(u)$ with $u\in Q_{h(x)}$, one has that 

$$\begin{aligned}&G_x(u-\Psi_x(u))=G_xu-G_x\circ\Psi_x(u)=G_xu-\Pi^{\tilde{P}_{\tilde{F}(x)}}_{Q_h}\circ G_x u-\Psi_{\tilde{F}(x)}(G_x u)\\
=&\Pi^{Q_{h(\tilde{F}(x))}}_{\tilde{P}}G_x u-\Psi_{\tilde{F}(x)}(G_x u)=\Pi^{Q_{h(\tilde{F}(x))}}_{\tilde{P}}G_x u-\Psi_{\tilde{F}(x)}(\Pi^{Q_{h(\tilde{F}(x))}}_{\tilde{P}}G_x u)\in\tilde{Q}_{\tilde{F}(x)}.\end{aligned}$$ Thus, $G_x\tilde{Q}_x\subset \tilde{Q}_{\tilde{F}(x)}$ for any $x\in \tilde{K}$. Clearly, $\tilde{Q}_x\cap\tilde{P}_x=\{0\}$ for any $x\in \tilde{K}$. (Otherwise, $u-\Psi_x(u)\in\tilde{P}_x$ for some $u\in Q_{h(x)}\setminus\{0\}$. Together with $\Psi_x(u)\in\tilde{P}_x\setminus\{0\}$, one has that $u\in\tilde{P}_x$, a contracdition to $Q_{h(x)}\cap\tilde{P}_x=\{0\}$.) On the other hand, 

$$v=\Pi^{\tilde{P}_x}_{Q_h} v+\Pi^{Q_{h(x)}}_{\tilde{P}}v=\Pi^{Q_{h(x)}}_{\tilde{P}}v-\Psi_x(\Pi^{Q_{h(x)}}_{\tilde{P}}v)+  \Pi^{\tilde{P}_x}_{Q_h}v+\Psi_x(\Pi^{Q_{h(x)}}_{\tilde{P}}v)$$ for any $v\in X$. Note that $Q_{h(x)}$ is closed and so is $\tilde{Q}_x$ for any $x\in\tilde{K}$. By virtue of Hahn-Banach theorem, there is a $l_{x_i}\in X^*\cap S^*$ such that ${\rm Ker}(l_{x_i})=\tilde{Q}_x\oplus Y_{-i}$ for any $i\in\{1,2,\cdots,k\}$, where $Y_{-i}$ is the $k-1$ dimensional subspace mentioned in (\ref{dx}); and then,  

\begin{equation}\label{Q-invariance} \begin{aligned}&\tilde{K}\times (\tilde{Q}_x) \,\,{\rm is \,\,a}\,\, k-{\rm codimensional\,\, bundle\,\, such\,\, that}\\
&X=\tilde{P}_x\oplus \tilde{Q}_x\quad {\rm and} \quad G_x\tilde{Q}_x\subset \tilde{Q}_{\tilde{F}(x)}\end{aligned}\end{equation} for any $x\in \tilde{K}$.

\vskip 5mm
\noindent{\bf Step 5.  The $k$-exponential separation type property of $(\tilde{F},\,\mathcal{G})$.}
\vskip 5mm

Clearly, for any $v_{\tilde{Q}_x}\in \tilde{Q}_x$ with $x\in \tilde{K}$, ones have that $v_{\tilde{Q}_x}=u_{Q_{h(x)}}-\Psi_x(u_{Q_{h(x)}})$, where $u_{Q_{h(x)}}=\Pi^{Q_{h(x)}}_{\tilde{P}}v_{\tilde{Q}_x} \in Q_{h(x)}$.  By virtue of (\ref{Covergence-norm-Psi}), one has that 

\begin{equation}\label{Q-t-Q-propotion}1+\delta_{\Pi^{\tilde{P}_K}_{Q_h}}\geq \frac{\norm{u_{Q_{h(x)}}}}{\norm{v_{\tilde{Q}_x}}}\geq\frac{\norm{u_{Q_{h(x)}}}}{\norm{u_{Q_{h(x)}}}+\norm{\Psi_x(u_{Q_{h(x)}})}}\geq \frac{1}{1+M_4}\end{equation} for any $v_{\tilde{Q}_x}\in \tilde{Q}_x\setminus\{0\}$ with $x\in \tilde{K}$. Note that $G_x^{n}v_{\tilde{Q}_x}=\Pi^{Q_{h(\tilde{F}^{n}(x))}}_{\tilde{P}}G^{n}_x v_{\tilde{Q}_x}-\Psi_x(\Pi^{Q_{h(\tilde{F}^{n}(x))}}_{\tilde{P}}G^{n}_x v_{\tilde{Q}_x})$ for any $v_{\tilde{Q}_x}\in \tilde{Q}_x$ with $x\in \tilde{K}$. Then, 

\begin{equation}\label{Q-t-Q-propotion-with-n} \norm{  \Pi^{Q_{h(\tilde{F}^{n}(x))}}_{\tilde{P}}G^{n}_x v_{\tilde{Q}_x} } \geq  \frac{1}{1+M_4}\cdot\norm{G_x^{n}v_{\tilde{Q}_x}}\end{equation} for any $n\in\mathbb{N}^+$ and $v_{\tilde{Q}_x}\in \tilde{Q}_x\setminus\{0\}$ with $x\in \tilde{K}$. It then follows that 

\begin{equation}\label{Separation-G-N}\begin{aligned}&\frac{\norm{G^{N^{\prime}n}_x v_{\tilde{Q}_{x}}} }{1+M_4}\leq\norm{\Pi^{Q_{h(\tilde{F}^{N^{\prime}n}(x))}}_{\tilde{P}}G^{N^{\prime}n}_x v_{\tilde{Q}_x}}\\
&\overset{(\ref{kd-uni-bundle-invariant})\,{\rm and}\,{\rm Range}(\Psi_x)\subset\tilde{P}_x}{=}\norm{\Pi^{Q_{h(\tilde{F}^{N^{\prime}n}(x))}}_{\tilde{P}}G^{N^{\prime}n}_x u_{Q_{h(x)}}}\\
&\overset{(\ref{Separation-G-tilde})\,{\rm and}\,\tilde{\tilde{M}}=\max\{\tilde{M}^{\prime},1\}}{\leq} \tilde{\tilde{M}} \cdot \tilde{\eta}^n\cdot \norm{u_{Q_{h(x)}}}\cdot\norm{G^{N^{\prime}n}_x v_{\tilde{P}_{x}}}\\
&\overset{(\ref{Q-t-Q-propotion})}{\leq}  \tilde{\tilde{M}} \cdot \tilde{\eta}^n\cdot(1+\delta_{\Pi_{Q_h}^{\tilde{P}_K}})\cdot\norm{G^{N^{\prime}n}_x v_{\tilde{P}_{x}}}
\end{aligned}\end{equation} for any $n\in\mathbb{N}$, $v_{\tilde{P}_{x}}\in \tilde{P}_{x}\cap S$ and $v_{\tilde{Q}_{x}}\in \tilde{Q}_{x}\cap S$, where $\tilde{\eta}=\frac{12\delta_{\Pi^{P_K}_Q}+5}{12\delta_{\Pi^{P_K}_Q}+7}$. Let 

$$M_5=(\frac{\delta_{norm}}{\tilde{\eta}{\frac{1}{N^{\prime}}}})^{N^{\prime}-1}\cdot\delta_{G^{-1}_{R.}}\cdot\tilde{\tilde{M}}(1+M_4)\cdot(1+\delta_{\Pi_{Q_h}^{\tilde{P}_K}}).$$ Then, one has that 

\begin{equation}\label{Separation-G}\norm{G^{n}_x v_{\tilde{Q}_{x}}}\leq M_5\cdot  \eta^n\cdot \norm{G^{n}_x v_{\tilde{P}_{x}}}
\end{equation}for any $n\in\mathbb{N}$, $v_{\tilde{P}_{x}}\in \tilde{P}_{x}\cap S$ and $v_{\tilde{Q}_{x}}\in \tilde{Q}_{x}\cap S$, where $\eta=\tilde{\eta}^{\frac{1}{N^{\prime}}}$. By virtue of (\ref{kd-uni-bundle-invariant}), (\ref{Q-invariance}) and (\ref{Separation-G}), 

\begin{equation}\label{k-es-tp} (\tilde{F},\mathcal{G})\,\, {\rm on} \tilde{K}\times X\,\, {\rm has\,\, a}\,\, k-{\rm exponential\,\, separation\,\, type\,\, property}.\end{equation}

\vskip 5mm
\noindent{\bf Step 6. The continuity of the certain invariant $k$-dimensional bundle w.r.t. $(\tilde{F},\mathcal{G})$.}
\vskip 5mm
Let 

$$C^{\lambda}_{x}=\{v\in X:\,\norm{\Pi^{Q_{h(x)}}_P v}\leq\lambda\cdot\norm{\Pi^{P_{h(x)}}_Qv }\}$$ for any $\lambda\geq 0$ and $x\in \tilde{K}$. We have the following estimates for any $x,\,y\in \tilde{K}$ such that $\norm{x-y}+2\delta\leq\delta_0$ and $v_x\in C^{\lambda}_{x}\cap S$:

\begin{equation}\label{Difference-cone-field}\begin{aligned}&\norm{\Pi_P^{Q_{h(y)}}v_x}\leq \norm{\Pi_P^{Q_{h(x)}}v_x}+\norm{(\Pi_P^{Q_{h(y)}}-\Pi_P^{Q_{h(x)}})v_x}\\
&\overset{(\ref{Pertu-K-F})+(\ref{Pi-c-bound})}{\leq}\norm{\Pi_P^{Q_{h(x)}}v_x}+c(\norm{x-y}+2\delta)\\
&\overset{v_x\in C^{\lambda}_{x}\cap S+(\ref{Pertu-K-F})+(\ref{Pi-c-bound})}{\leq}\lambda\cdot \norm{\Pi_Q^{P_{h(y)}}v_x}+(1+\lambda)\cdot c(\norm{x-y}+2\delta).
\end{aligned}\end{equation} Note that $1=\norm{v_x}\leq \norm{\Pi^{Q_{h(x)}}_P v_x}+ \norm{\Pi^{P_{h(x)}}_Q v_x}\leq (1+\lambda)\cdot \norm{\Pi^{P_{h(x)}}_Q v_x}\leq (1+\lambda)\cdot(\norm{\Pi^{P_{h(y)}}_Q v_x}+ c(\norm{x-y}+2\delta))$; and more, 

\begin{equation}\label{QtoP-Lowbound} \norm{\Pi_Q^{P_{h(y)}} v_x}\geq \frac{1}{1+\lambda}-c(\norm{x-y}+2\delta)
\end{equation} for any $x,\,y\in \tilde{K}$ such that $\norm{x-y}+2\delta\leq\delta_0$ and $v_x\in C^{\lambda}_{x}\cap S$. Together with (\ref{Difference-cone-field}) and (\ref{QtoP-Lowbound}), we have the following estimates for any $x,\,y\in \tilde{K}$ such that $\norm{x-y}+2\delta\leq\delta_0$, $v_x\in C^{\lambda}_{x}\cap S$ and $\lambda\in [0,\frac{1}{c(\norm{x-y}+2\delta)}-1)$:

\begin{equation}\label{Intersection-cone-field} \frac{\norm{\Pi^{Q_{h(y)}}_P v_x} }{ \norm{\Pi^{P_{h(y)}}_Q v_x} }\leq \lambda+\frac{(1+\lambda)^2\cdot c(\norm{x-y}+2\delta)}{1-(1+\lambda)\cdot c(\norm{x-y}+2\delta)}.\end{equation} Recall that $c(\cdot)$ is a nonnegative and increasing function on $\mathbb{R}^+$ such that $\lim\limits_{t\rightarrow 0^+}c(t)=0$. It entails that one can find a positive constant $\delta_{\varepsilon}\in (0,\delta_{\varepsilon}^{\prime\prime})$ such that $c(3\delta)\leq \frac{1}{10}$ for any $\delta\in[0,\delta_{\epsilon}]$. 

In the rest of proof, we always assume $\delta\in[0,\delta_{\epsilon}]$. Thus, the following relationships hold:

\begin{equation}\label{Is-cf-cndelta}\begin{aligned}&C^{\lambda}_{x}\subset C_{y}^{f(\lambda)},\,\,C_{y}^{\lambda}\subset C_{x}^{f(\lambda)}\,\,{\rm with}\,\,\lambda\in[0,9)\\
&{\rm and\,\,specially,}\\
&C_{x}^{\frac{2}{3}}\cup C_{y}^{\frac{2}{3}}\subset  C_x\cap C_y,\\
&\overline{\mathbb{R}\cdot (G_{\tilde{F}^{-N^{\prime}}(x)}^{N^{\prime}} C_{\tilde{F}^{-N^{\prime}}(x)}\cap S+B_{\frac{4}{15\delta_{\Pi^{P_K}_Q}+9}}(0))}\subset C_{x}^{\frac{2}{3}},\\
&\overline{\mathbb{R}\cdot (G_{\tilde{F}^{-N^{\prime}}(y)}^{N^{\prime}} C_{\tilde{F}^{-N^{\prime}}(y)}\cap S+B_{\frac{4}{15\delta_{\Pi^{P_K}_Q}+9}}(0))}\subset C_{y}^{\frac{2}{3}}
\end{aligned}\end{equation} for any $x,\,y\in \tilde{K}$ with $\delta\in [0,\delta_{\epsilon}]$ such that $\norm{x-y}\leq \delta$, where $f(\lambda)=\frac{11\lambda+1}{9-\lambda}$. It then follows from (\ref{G-contraction-delta}), (\ref{kd-uni-bundle-invariant}), (\ref{Is-cf-cndelta}) and $\tilde{F}$ being a homemorphism on $\tilde{K}$ that for any $x,y\in\tilde{K}$ such that $\norm{x-y}\leq\delta$, one has that 

\begin{equation}\label{dist-tP-bel}\tilde{P}_x\subset C_x^{\frac{1}{2}}\subset C^{\frac{13}{17}}_y\quad{\rm and}\quad \tilde{P}_y\subset C_y^{\frac{1}{2}}\subset C^{\frac{13}{17}}_x\end{equation}

Now, we assert that 

\begin{equation}\label{loc-tilide-Q} \tilde{Q}_x\cap C_x=\{0\}\quad {\rm for\,\,any}\,\,x\in \tilde{K}.
\end{equation} Prove by contrary. Suppose that there is a $w_{\tilde{Q}_z}\in \tilde{Q}_z\cap C_z\cap S$ with a certain point $z\in \tilde{K}$. Together with (\ref{tP-Qh-dsum}), $Q_{h(z)}\cap C_z=\{0\}$ and (\ref{Q-t-Q-propotion}), one has that $\Pi^{\tilde{P}_z}_{Q_h} w_{\tilde{Q}_z}=-\Psi_{z}(\Pi_{\tilde{P}}^{Q_{h(z)}}w_{\tilde{Q}_z})\in \tilde{P}_z\setminus\{0\}$ such that $\norm{\Pi_{\tilde{P}}^{Q_{h(z)}}w_{\tilde{Q}_z}}\in [\frac{1}{1+M_4},\,1+\delta_{\Pi^{\tilde{P}_K}_{Q_h}}]$. It furthermore follows from (\ref{second-layer-focu}) and (\ref{Q-invariance}) that 

$$G^{N^{\prime}n}_z w_{\tilde{Q}_z}\in \tilde{Q}_{\tilde{F}^{N^{\prime}n}(z)}\cap {\rm Int}\,\tilde{\tilde{C}}_{\tilde{F}^{N^{\prime}n}(z)}$$ for any $n\in\mathbb{N}^+$. Combinating with (\ref{kd-uni-bundle-invariant})-(\ref{tP-Qh-dsum}) and (\ref{Covergence-norm-Psi})-(\ref{Q-invariance}), ones have that

\begin{equation}\label{ratio-tP-Qh}\begin{aligned}&\Pi^{\tilde{P}_{\tilde{F}^{N^{\prime}n}(z)}}_{Q_h}G^{N^{\prime}n}_z w_{\tilde{Q}_z}=-\Psi_{\tilde{F}^{N^{\prime}n}(z)}(\Pi_{\tilde{P}}^{Q_{h(\tilde{F}^{N^{\prime}n}(z) )}}G^{N^{\prime}n}_z w_{\tilde{Q}_z})\in\tilde{P}_{\tilde{F}^{N^{\prime}n}(z)}\setminus\{0\}\subset {\rm Int}\tilde{\tilde{C}}_{ \tilde{F}^{N^{\prime}n}(z)}\\
&{\rm and}\\
& \Pi_{\tilde{P}}^{Q_{h( \tilde{F}^{N^{\prime}n}(z) )}}G^{N^{\prime}n}_z w_{\tilde{Q}_z}\in Q_{h(\tilde{F}^{N^{\prime}n}(z))}\setminus\{0\}\subset X\setminus C_{ \tilde{F}^{N^{\prime}n}(z)}\,\,{\rm such\,\,that}\\
&\frac{\norm{\Pi_{\tilde{P}}^{Q_{h(\tilde{F}^{N^{\prime}n}(z))}}G^{N^{\prime}n}_z w_{\tilde{Q}_z}}}{\norm{\Pi^{\tilde{P}_{\tilde{F}^{N^{\prime}n}(z)}}_{Q_h}G^{N^{\prime}n}_z w_{\tilde{Q}_z}}}\geq \frac{1}{M_4}
\end{aligned}\end{equation} for any $n\in\mathbb{N}^+$. Obviously,

$$G^{N^{\prime}n}_z w_{\tilde{Q}_z}-\Pi^{\tilde{P}_{\tilde{F}^{N^{\prime}n}(z)}}_{Q_{h}}G^{N^{\prime}n}_z w_{\tilde{Q}_z}=\Pi_{\tilde{P}}^{Q_{h(\tilde{F}^{N^{\prime}n}(z))}}G^{N^{\prime}n}_z w_{\tilde{Q}_z}\in X\setminus C_{ \tilde{F}^{N^{\prime}n}(z)}$$ for any $n\in\mathbb{N}$. It then follows that  

\begin{equation}
\begin{aligned}&\alpha^{\tilde{\tilde{C}}_{\tilde{F}^{N^{\prime}n}(z)}}_0(G^{N^{\prime}n}_z w_{\tilde{Q}_z}, \Pi^{\tilde{P}_{\tilde{F}^{N^{\prime}n}(z)}}_{Q_{h}}G^{N^{\prime}n}_z w_{\tilde{Q}_z}) >1\\
&{\rm and}\\
&\alpha^{\tilde{\tilde{C}}_{\tilde{F}^{N^{\prime}n}(z)}}_0(\Pi^{\tilde{P}_{\tilde{F}^{N^{\prime}n}(z)}}_{Q_{h}}G^{N^{\prime}n}_z w_{\tilde{Q}_z}, G^{N^{\prime}n}_z w_{\tilde{Q}_z}) >1
\end{aligned}
\end{equation} for any $n\in\mathbb{N}^+$. By virtue of the estimate (\ref{C..L-distance}) in Lemma \ref{SF-angles}(iii), one has that 

$$\Pi_{\tilde{P}}^{Q_{h(\tilde{F}^{N^{\prime}n}(z))}}G^{N^{\prime}n}_z w_{\tilde{Q}_z}+{\rm Int}\,B_{\norm{\Pi_{\tilde{P}}^{Q_{h(\tilde{F}^{N^{\prime}n}(z))}}G^{N^{\prime}n}_z w_{\tilde{Q}_z}}\cdot \frac{\kappa_{\tilde{F}^{N^{\prime}(n-1)}(z)}-\tilde{\kappa}_{\tilde{F}^{N^{\prime}(n-1)}(z)}}{1+\kappa_{\tilde{F}^{N^{\prime}(n-1)}(z)}}}(0)\subset X\setminus \tilde{\tilde{C}}_{\tilde{F}^{N^{\prime}n}(z)}$$ and then, by virtue of (\ref{first-layer-focu})-(\ref{second-layer-focu}) and (\ref{ratio-tP-Qh}), one has 

\begin{equation}\label{alphtt-lowb}\alpha^{\tilde{\tilde{C}}_{\tilde{F}^{N^{\prime}n}(z)}}_0(\Pi^{\tilde{P}_{\tilde{F}^{N^{\prime}n}(z)}}_{Q_{h}}G^{N^{\prime}n}_z w_{\tilde{Q}_z}, G^{N^{\prime}n}_z w_{\tilde{Q}_z})\geq 1+\frac{1}{4M_4(3\delta_{\Pi_{Q_h}^{P_K}}+2)}
\end{equation} for any $n\in\mathbb{N}^+$. Together with Lemma \ref{P-alp-0}(ii)-(iii) and the compactness of $\tilde{P}_{\tilde{F}^{N^{\prime}n}(z)}\cap S$ for any $n\in\mathbb{N}^+$, one has that 

\begin{equation}\label{Gnwq-P-angle-upper-b}1+\frac{1}{4M_4(3\delta_{\Pi_{Q_h}^{P_K}}+2)}\leq \sup\limits_{v\in \tilde{P}_{\tilde{F}^{N^{\prime}n}(z)}\cap S }\{ \alpha^{\tilde{\tilde{C}}_{\tilde{F}^{N^{\prime}n}(z)}}(G^{N^{\prime}n}_z w_{\tilde{Q}_z}, v)\}<+\infty \,\,{\rm for\,\, any\,}\, n\in\mathbb{N}^+.\end{equation} Furthermore, together with (a.), Lemma \ref{P-alp-0}(ii), the first assertion with (\ref{k-angle-constraction-vector}) in proof of Lemma \ref{SF-angles}(ii) and (\ref{kd-uni-bundle-invariant})-(\ref{tP-inner}), one has that 

\begin{equation}\label{tQ-loca-angl-contract-b} \begin{aligned}&\sup\limits_{v\in \tilde{P}_{\tilde{F}^{N^{\prime}(n+1)}(z)}\cap S }\{ {\rm In}(\alpha^{\tilde{\tilde{C}}_{\tilde{F}^{N^{\prime}(n+1)}(z)}}(G^{N^{\prime}(n+1)}_z w_{\tilde{Q}_z}, v)) \} \\
&\leq \frac{1-\tilde{\tilde{\kappa}}_{ \tilde{F}^{N^{\prime}n}(z) }(N^{\prime})}{1+\tilde{\tilde{\kappa}}_{ \tilde{F}^{N^{\prime}n}(z) }(N^{\prime})}\cdot \sup\limits_{v\in \tilde{P}_{\tilde{F}^{N^{\prime}n}(z) }\cap S }\{ {\rm In}(\alpha^{\tilde{\tilde{C}}_{\tilde{F}^{N^{\prime}n}(z) }}(G^{N^{\prime}n}_z w_{\tilde{Q}_z}, v) )\}\\
&\leq \frac{12\delta_{\Pi_{Q}^{P_K}} +5}{12\delta_{\Pi_{Q}^{P_K}}+7}\cdot \sup\limits_{v\in \tilde{P}_{\tilde{F}^{N^{\prime}n}(z) }\cap S }\{ {\rm In}(\alpha^{\tilde{\tilde{C}}_{\tilde{F}^{N^{\prime}n}(z) }}(G^{N^{\prime}n}_z w_{\tilde{Q}_z}, v) )\}\end{aligned}
\end{equation} for any $n\in\mathbb{N}^+$. It entails that 

$$\sup\limits_{v\in \tilde{P}_{\tilde{F}^{N^{\prime}n}(z)}\cap S }\{ \alpha^{\tilde{\tilde{C}}_{\tilde{F}^{N^{\prime}n}(z)}}(G^{N^{\prime}n}_z w_{\tilde{Q}_z}, v)\}<1+\frac{1}{4M_4(3\delta_{\Pi_{Q_h}^{P_K}}+2)}$$ for sufficiently large $n\in\mathbb{N}^+$, a contradiction to (\ref{Gnwq-P-angle-upper-b}). Thus, we have proved this assertion (\ref{loc-tilide-Q}).

Now, we assert that the invariant $k$-dimensional bundle $\tilde{K}\times (\tilde{P}_x)$ w.r.t. $(\tilde{F},\mathcal{G})$ is continuous. Prove by contrary. Suppose that there is a point $x\in\tilde{K}$, a sequence $\{x_m\}\subset\tilde{K}$ and a positive number $M_6>0$ such that 

$$\lim\limits_{n\rightarrow +\infty}x_m=x\quad {\rm and} \quad d(\tilde{P}_{x_m},\tilde{P}_x)> M_6.$$ As a consequence, there is a number $M_7\in (0,1)$ and a sequence $\{v_m\}_{m=1}^{+\infty}$ such that $v_m\in\tilde{P}_{x_m}\cap S$ and 

\begin{equation}\label{sequ-propo-lb}\frac{\norm{\Pi_{\tilde{P}}^{\tilde{Q}_x} v_m}}{\norm{\Pi_{\tilde{Q}}^{\tilde{P}_x} v_m}}>M_7\end{equation} for any $m\in\mathbb{N}^+$. By virtue of (\ref{kd-uni-bundle-invariant}), there is a unique point $v_{m,-n}\in \tilde{P}_{\tilde{F}^{-n}(x_m)}\setminus\{0\}$ such that $$G_{\tilde{F}^{-n}(x_m)}^n v_{m,-n}=v_m$$ for any $n\in\mathbb{N}^+$ and $m\in \mathbb{N}^+$. Take $N^{\prime\prime}\in\mathbb{N}^+$ such that 

$$M_5\cdot\eta^{N^{\prime\prime}}<\frac{4(1+M_4)}{17+30\delta_{\Pi_Q^{P_K}}}\cdot M_7,$$ where $M_5,\eta$ are the constants in (\ref{Separation-G}). It entails that 

$$ \frac{\norm{\Pi_{\tilde{P}}^{\tilde{Q}_{\tilde{F}^{-N^{\prime\prime}}(x) }} v_{m,-N^{\prime\prime}}}}{\norm{\Pi_{\tilde{Q}}^{\tilde{P}_{\tilde{F}^{-N^{\prime\prime}}(x)}}v_{m,-N^{\prime\prime}}}}>\frac{17+30\delta_{\Pi_Q^{P_K}}}{4(1+M_4) }$$ for sufficiently large $m\in\mathbb{N}^+$. Together with (\ref{Q-t-Q-propotion}), one has that 

$$\norm{\Pi^{Q_{h(\tilde{F}^{-N^{\prime\prime}}(x))}}_P \circ \Pi_{\tilde{P}}^{\tilde{Q}_{\tilde{F}^{-N^{\prime\prime}}(x)}}v_{m,-N^{\prime\prime}}}> \frac{17+30\delta_{\Pi_Q^{P_K}}}{4}\cdot \norm{\Pi_{\tilde{Q}}^{\tilde{P}_{\tilde{F}^{-N^{\prime\prime}}(x) }} v_{m,-N^{\prime\prime}}}.$$ It then follows from (\ref{G-contraction-delta}), (\ref{kd-uni-bundle-invariant}), (\ref{loc-tilide-Q}) that 

$$  \frac{\norm{\Pi_{P}^{Q_{h(\tilde{F}^{-N^{\prime\prime}}(x) )}} v_{m,-N^{\prime\prime}}}}  {\norm{\Pi_{Q}^{P_{h(\tilde{F}^{-N^{\prime\prime}}(x))}}v_{m,-N^{\prime\prime}}}}>\frac{13}{17}$$ for sufficiently large $m\in\mathbb{N}^+$. Since $\tilde{F}$ is a homeomorphism on $\tilde{K}$, there is an integer $M_8>0$ such that 

$$\norm{\tilde{F}^{-N^{\prime\prime}}(x_m)-\tilde{F}^{-N^{\prime\prime}}(x) }\leq \delta$$ for any $m\geq M_8$. It then follows from (\ref{dist-tP-bel}) that for any $m\geq M_8$, ones have that $\tilde{P}_{\tilde{F}^{-N^{\prime\prime}}(x_m)}\subset C_{\tilde{F}^{-N^{\prime\prime}}(x_m)}^{\frac{1}{2}}\subset C_{\tilde{F}^{-N^{\prime\prime}}(x) }^{\frac{13}{17}}$ and hence, 

\begin{equation}\label{propotion-inti} \frac{\norm{\Pi_{P}^{Q_{h(\tilde{F}^{-N^{\prime\prime}}(x) )}} v_{m,-N^{\prime\prime}}}}  {\norm{\Pi_{Q}^{P_{h(\tilde{F}^{-N^{\prime\prime}}(x))}}v_{m,-N^{\prime\prime}}}}\leq\frac{13}{17},
\end{equation} a contradiction. Thus, $\tilde{K}\times (\tilde{P}_x)$ is continuous, and we have proved this assertion.

\vskip 5mm
\noindent{\bf Step 7. The continuity of the certain positive invariant $k$-codimensional bundle w.r.t. $(\tilde{F},\mathcal{G})$.}
\vskip 5mm

In this step, we will prove that $\tilde{K}\times(\tilde{Q}_x)$ is continuous, where $\tilde{K}\times(\tilde{Q}_x)$ is the one in (\ref{Q-invariance}). After giving necessary objects for decribing properties of positive invariant bundles, we will give some assertions to draw the skeleton of the positive invariant bundles w.r.t. $(\tilde{F},\mathcal{G})$. For any given $z\in \tilde{K}$, it follows from the arguments for well-defining the correction map $\Psi_x, x\in \tilde{K}$ in step 3 of proof that there is a $k$-tuple $\{v_z^1,v_z^2,\cdots,v_z^k\}\subset\tilde{P}_z\cap S$ such that 

\begin{equation}\label{Pz-sk}\begin{aligned} &{\rm Span}\{v_z^1,v_z^2,\cdots,v_z^k\}=\tilde{P}_z,\\
&d_{usual}(v_z^{i+1},\,{\rm Span}\{v_z^1,\,v_z^2\,\cdots,v_z^i\})>\frac{3}{4}\quad{\rm for\,\,all}\,\,i\in\{1,2,\cdots,k-1\},\\
&d_z=\inf\limits_{l\in\{1,2,\cdots,k\}} d_{usual}(v_z^l,Y_z^{-l})\geq (\frac{3}{7})^{k-1},
\end{aligned}\end{equation} where $Y_z^{-l}={\rm Span}\{\{v_z^1,v_z^2,\cdots,v_z^k\}\setminus\{v_z^l\}\}$. Since $\tilde{K}\times (\tilde{P}_x)$ is continuous, for any $\tilde{\varepsilon}\in (0, \frac{\min\{\frac{3}{4},\,(\frac{3}{7})^{k-1}\}}{3})$, there is a $\tilde{\delta}_{\tilde{\varepsilon}}>0$ such that for any $\tilde{z}\in B_{\tilde{\delta}_{\tilde{\varepsilon}}}(z)$, there is a $k$-tuple $\{v_{\tilde{z}}^1,v_{\tilde{z}}^2,\cdots,v_{\tilde{z}}^k\}\subset\tilde{P}_{\tilde{z}}\cap S$ such that  

\begin{equation}\label{Ptz-sk}\begin{aligned}&{\rm Span}\{v_{\tilde{z}}^1,v_{\tilde{z}}^2,\cdots,v_{\tilde{z}}^k\}=\tilde{P}_{\tilde{z}}\,\,{\rm with  }\,\,\norm{v_{\tilde{z}}^i-v_z^i}\leq\tilde{\varepsilon}\,\,{\rm for\,\,all}\,\,i\in\{1,2,\cdots,k\},\\
&d_{usual}(v_{\tilde{z}}^{i+1},\,{\rm Span}\{v_{\tilde{z}}^1,\,v_{\tilde{z}}^2\,\cdots,v_{\tilde{z}}^i\})>\frac{3}{4}-\tilde{\varepsilon}\,\,{\rm for\,\,all}\,\,i\in\{1,2,\cdots,k-1\},\\
&\abs{d_{usual}(v_{\tilde{z}}^i,Y_{\tilde{z}}^{-i})-d_{usual}(v_z^i,Y_z^{-i})}<\tilde{\varepsilon}\,\,{\rm for\,\, any}\,\,i\in\{1,2,\cdots,k\}\\
&{\rm and\,\,hence},\,\,d_{\tilde{z}}=\inf\limits_{i\in\{1,2,\cdots,k\}} d_{usual}(v_{\tilde{z}}^i,Y_{\tilde{z}}^{-i})\geq (\frac{3}{7})^{k-1}-\tilde{\varepsilon},
\end{aligned}\end{equation} where $Y_{\tilde{z}}^{-i}={\rm Span}\{\{v_{\tilde{z}}^1,v_{\tilde{z}}^2,\cdots,v_{\tilde{z}}^k\}\setminus\{v_{\tilde{z}}^i\}\}$. Let 

$$ d^i_{\tilde{z}}=d_{usual}(v_{\tilde{z}}^i, Y^{-i}_{\tilde{z}}\oplus \tilde{Q}_{\tilde{z}}) $$ for any $i\in\{1,2,\cdots,k\}$ and $\tilde{z}\in B_{\tilde{\delta}_{\tilde{\varepsilon}}}(z)\cap \tilde{K}$ with $\tilde{\varepsilon}\in(0, \frac{\min\{\frac{3}{4},\,(\frac{3}{7})^{k-1}\}}{3})$. Together with (\ref{Def-t-ttCxN})-(\ref{first-layer-focu}), (\ref{kd-uni-bundle-invariant}), (\ref{loc-tilide-Q}) and (\ref{Ptz-sk}), one has that 

\begin{equation}\label{dzi-lower-b}d^i_{\tilde{z}}\in[\frac{(\frac{3}{7})^{k-1}-\tilde{\varepsilon}}{6\delta_{\Pi_Q^{P_K}}+3},1]
\end{equation} for any $i\in\{1,2,\cdots,k\}$ and $\tilde{z}\in B_{\tilde{\delta}_{\tilde{\varepsilon}}}(z)\cap \tilde{K}$ with $\tilde{\varepsilon}\in(0, \frac{\min\{\frac{3}{4},\,(\frac{3}{7})^{k-1}\}}{3})$. By Hahn-Banach theorem, there is an unit functional $l_{\tilde{z}}^i\in X^*$ such that for any $i\in\{1,2,\cdots,k\}$ and $\tilde{z}\in B_{\tilde{\delta}_{\tilde{\varepsilon}}}(z)\cap \tilde{K}$ with $\tilde{\varepsilon}\in (0, \frac{\min\{\frac{3}{4},\,(\frac{3}{7})^{k-1}\}}{3})$, ones have that

\begin{equation}\label{Q*-sk}\begin{aligned}&\norm{l_{\tilde{z}}^i}=1,\,\,l_{\tilde{z}}^i(v_{\tilde{z}}^i)=d_{\tilde{z}}^i\\
&{\rm and}\\
&l_{\tilde{z}}^i(v)=0\,\,{\rm for\,\,any\,\,} v\in Y_{\tilde{z}}^{-i}\oplus\tilde{Q}_{\tilde{z}}.
\end{aligned}\end{equation}  Denoted by 

$$L_{\tilde{z}}={\rm Span}\{l_{\tilde{z}}^i: i\in\{1,2,\cdots,k\}\}$$ for any $\tilde{z}\in B_{\tilde{\delta}_{\tilde{\varepsilon}}}(z)\cap \tilde{K}$ with $\tilde{\varepsilon}\in (0, \frac{\min\{\frac{3}{4},\,(\frac{3}{7})^{k-1}\}}{3})$. It follows from (\ref{Q*-sk}) that $L_{\tilde{z}}\subset X^*$ are $k$-dimensional subspace of $X^*$ such that 

\begin{equation}\label{Ker-Ltz}{\rm Ker}(L_{\tilde{z}})=\tilde{Q}_{\tilde{z}}\end{equation} for any $\tilde{z}\in B_{\tilde{\delta}_{\tilde{\varepsilon}}}(z)\cap \tilde{K}$ with $\tilde{\varepsilon}\in (0, \frac{\min\{\frac{3}{4},\,(\frac{3}{7})^{k-1}\}}{3})$.

\vskip 5mm
{\bf (As-1)} We assert that 

\begin{equation}l_{\tilde{z}}\in L_{\tilde{z}} \,\,{\rm if\,\, and\,\, only\,\, if}\,\, \tilde{Q}_{\tilde{z}}\subset {\rm Ker}(l_{\tilde{z}})\end{equation} for any $\tilde{z}\in B_{\tilde{\delta}_{\tilde{\varepsilon}}}(z)\cap \tilde{K}$ with $\tilde{\varepsilon}\in (0, \frac{\min\{\frac{3}{4},\,(\frac{3}{7})^{k-1}\}}{3})$. For such a $\tilde{z}$, it is clear that $l_{\tilde{z}}\in L_{\tilde{z}}$ implies $\tilde{Q}_{\tilde{z}}\subset {\rm Ker}(l_{\tilde{z}}) $. It suffices to prove that $l_{\tilde{z}}\in L_{\tilde{z}}$ for any $l_{\tilde{z}}\in X^*$ such that $\tilde{Q}_{\tilde{z}}\subset {\rm Ker}(l_{\tilde{z}}) $. For a given $v\in X$, suppose that $v=w_{\tilde{z}}+\sum\limits_{i=1}^k\lambda_i v_{\tilde{z}}^i$ with $w_{\tilde{z}}\in \tilde{Q}_{\tilde{z}}$ and $\lambda_i\in \mathbb{R},\,i\in\{1,2,\cdots,k\}$. Then, $l_{\tilde{z}}(v)=\sum\limits_{i=1}^k\lambda_i l_{\tilde{z}}(v_{\tilde{z}}^i)$. It then follows from (\ref{dzi-lower-b})-(\ref{Q*-sk}) that 

\begin{equation}\label{lr} l_{\tilde{z}}(v)=\sum\limits_{i=1}^k\frac{l_{\tilde{z}}(v_{\tilde{z}}^i)}{d^i_{\tilde{z}}} l_{\tilde{z}}^i(v)\,\,{\rm for\,\, any}\,\,v\in X\,\,{\rm and\,\, hence,} \,\,l_{\tilde{z}}=\sum\limits_{i=1}^k\frac{l_{\tilde{z}}(v_{\tilde{z}}^i)}{d^i_{\tilde{z}}} l_{\tilde{z}}^i
\end{equation} Thus, $l_{\tilde{z}}\in L_{\tilde{z}}$ and the assertion (As-1) has been proved.

\vskip 5mm
{\bf (As-2)} We assert that for any $\tilde{\varepsilon}\in (0, \min\{\frac{1}{4},\,\frac{1}{3}\cdot(\frac{3}{7})^{k-1}, \frac{1}{24\delta_{\Pi_Q^{P_K}}+14}\})$, there is a $\tilde{\delta}^{\prime}_{\tilde{\varepsilon}}\in (0,\tilde{\delta}_{\tilde{\varepsilon}})$ such that for any $k$-codimensional subspace $H\setminus\{0\}\subset X\setminus C_{\tilde{z}}$ with $\tilde{z}\in \tilde{K}\cap B_{\tilde{\delta}^{\prime}_{\tilde{\varepsilon}}}(z)$, ones have that

\begin{equation}\label{rela-dis-H-tQ}\sup\limits_{v\in H\cap S}\{d_{usual}(v, \tilde{Q}_z\cap S)\}<\tilde{\varepsilon} \Rightarrow \sup\limits_{v\in \tilde{Q}_z\cap S}\{d_{usual}(v, H\cap S)\}<2(24\delta_{\Pi_Q^{P_K}}+13)\cdot\tilde{\varepsilon},
\end{equation} and

\begin{equation}\label{rela-dis-tQ-H}\sup\limits_{v\in \tilde{Q}_z\cap S}\{d_{usual}(v, H\cap S)\}<\tilde{\varepsilon} \Rightarrow \sup\limits_{v\in H\cap S}\{d_{usual}(v, \tilde{Q}_z\cap S)\}<2(24\delta_{\Pi_Q^{P_K}}+13)\cdot\tilde{\varepsilon}.
\end{equation} Here, we only prove (\ref{rela-dis-H-tQ}), and the proof for (\ref{rela-dis-tQ-H}) is similiar. Recall that $\tilde{K}\times (\tilde{P}_x)$ is continuous. Then, there is a $\tilde{\delta}^{\prime}_{\tilde{\varepsilon}}\in (0,\tilde{\delta}_{\tilde{\varepsilon}})$ such that $d(\tilde{P}_{\tilde{z}},\tilde{P}_z)\leq \frac{1}{12\delta_{\Pi_{Q}^{P_K}}+6}$ for any $\tilde{z}\in \tilde{K}\cap B_{\tilde{\delta}^{\prime}_{\tilde{\varepsilon}}}(z)$. It then follows from (\ref{first-layer-focu}), (\ref{N-pri-denot}) and (\ref{kd-uni-bundle-invariant}) that for any $v\in \tilde{P}_{z}\cap S$ and $k$-codimensional subspace $H\subset X\setminus C_{\tilde{z}}$ with $\tilde{z}\in \tilde{K}\cap B_{\tilde{\delta}^{\prime}_{\tilde{\varepsilon}}}(z)$, one has 

\begin{equation}\label{dist-vz-Lt} d_{usual}(v, H)\geq  \kappa_{\tilde{F}^{-N^{\prime}}(\tilde{z})}-d(\tilde{P}_{\tilde{z}},\tilde{P}_z)\geq \kappa_{\tilde{F}^{-N^{\prime}}(\tilde{z})}- \frac{1}{12\delta_{\Pi_{Q}^{P_K}}+6}\geq \frac{1}{12\delta_{\Pi_{Q}^{P_K}}+6}.\end{equation} Thus, $X=\tilde{P}_z\oplus H$. Denoted by $\Pi^{\tilde{P}_z\cdot H}$ the projection onto $\tilde{P}_z$ along $H$. By virtue of (\ref{dist-vz-Lt}), one has that 

\begin{equation}\label{Pi-H-Pz-norm}\norm{\Pi^{\tilde{P}_z\cdot H}}_{L(X)}\leq 12\delta_{\Pi_Q^{P_K}}+6\end{equation} for any $k$-codimensional subspace $H\subset X\setminus C_{\tilde{z}}$ with $\tilde{z}\in \tilde{K}\cap B_{\tilde{\delta}^{\prime}_{\tilde{\varepsilon}}}(z)$.

Given such a $k$-codimensional subspace $H\subset X\setminus C_{\tilde{z}}$ with $\tilde{z}\in \tilde{K}\cap B_{\tilde{\delta}^{\prime}_{\tilde{\varepsilon}}}(z)$. If $w_1,\,w_2\in H$ such that $\Pi_{\tilde{P}}^{\tilde{Q}_z}(w_1-w_2)=0$, then $w_1-w_2=\Pi_{\tilde{Q}}^{\tilde{P}_z}(w_1-w_2)\in \tilde{P}_z$. Clearly, $w_1-w_2\in H$. Then, $X=\tilde{P}_z\oplus H$ implies that $w_1=w_2$ and hence, $\Pi_{\tilde{P}}^{\tilde{Q}_z}\mid_{H}$ is injective. On the other hand, for any $w_z\in\tilde{Q}_z$, there is a unique pair of points $w_{z_{P}}\in\tilde{P}_z,\,w_{z_H}\in H$ such that $w_z=w_{z_{P}}+w_{z_H}$, and hence, $w_z=\Pi_{\tilde{P}}^{\tilde{Q}_z}w_{z_H}$. It implies that $\Pi_{\tilde{P}}^{\tilde{Q}_z}\mid_{H}$ is bijective. So, for any $w\in \tilde{Q}_z\cap S$, there is a unique point $w_H\in H$ such that 

$$\Pi_{\tilde{P}}^{\tilde{Q}_z}w_{H}=w.$$ Together with $\sup\limits_{v\in H\cap S}\{d_{usual}(v, \tilde{Q}_z\cap S)\}<\tilde{\varepsilon}$, there is a $\lambda\in(0,1)$ such that $\sup\limits_{v\in H\cap S}\{d_{usual}(v, \tilde{Q}_z\cap S)\}<\lambda\cdot\tilde{\varepsilon} $, and then there is a $w^{\prime}\in\tilde{Q}_z\cap S$ such that $\norm{\frac{w_H}{\norm{w_H}}-w^{\prime}}<\lambda\cdot\tilde{\varepsilon}$. Then, 

\begin{equation}\label{pH-Q}\begin{aligned}&\norm{\frac{w}{\norm{w_H}}-w^{\prime} }=\norm{\Pi_{\tilde{P}}^{\tilde{Q}_z}(\frac{w_H}{\norm{w_H}}-w^{\prime})}<\norm{\Pi_{\tilde{P}}^{\tilde{Q}_z}}_{L(X)}\cdot\lambda\cdot\tilde{\varepsilon}\\
&\overset{(\ref{Pi-H-Pz-norm})\,{\rm and}\,\tilde{\varepsilon}\in(0,\frac{1}{24\delta_{\Pi_Q^{P_K}}+14})}{\leq} \frac{12\delta_{\Pi_Q^{P_K}}+7}{24\delta_{\Pi_Q^{P_K}}+14}=\frac{1}{2},\end{aligned}\end{equation} and 

\begin{equation}\label{pH-P}\begin{aligned}\norm{\frac{\Pi_{\tilde{Q} }^{\tilde{P}_z} w_H}{\norm{w_H}}}=\norm{\Pi_{\tilde{Q} }^{\tilde{P}_z}(\frac{w_H}{\norm{w_H}}-w^{\prime})}<\norm{\Pi_{\tilde{Q}}^{\tilde{P}_z}}_{L(X)}\cdot\lambda\cdot\tilde{\varepsilon}.\end{aligned}\end{equation} Consequently,

\begin{equation}\label{wH-norm}\norm{w_H} \in [ \frac{1}{1+\norm{\Pi_{\tilde{P}}^{\tilde{Q}_z}  }_{L(X)}\cdot\lambda\cdot \tilde{\varepsilon} }, \,\,\frac{1}{ 1-\norm{\Pi_{\tilde{P}}^{\tilde{Q}_z}  }_{L(X)}\cdot \lambda\cdot\tilde{\varepsilon} }]\end{equation} and

$$\begin{aligned}&\norm{\frac{w_H}{\norm{w_H}}-w}\leq\norm{\frac{w_H}{\norm{w_H}}-w_H}+\norm{w_H-w}\\
&\quad\quad\quad\quad \quad\leq \abs{\norm{w_H}-1}+\norm{\Pi_{\tilde{Q} }^{\tilde{P}_z} w_H}\\
&\quad\quad\quad\quad \quad\overset{(\ref{pH-P})\,{\rm and}\,(\ref{wH-norm})}{<}(\frac{\norm{ \Pi_{\tilde{P}}^{\tilde{Q}_z}}_{L(X)}+\norm{\Pi_{\tilde{Q} }^{\tilde{P}_z}}_{L(X)}}{1-\norm{ \Pi_{\tilde{P}}^{\tilde{Q}_z}}_{L(X)}\cdot\lambda\cdot \tilde{\varepsilon}}) \cdot \lambda\cdot\tilde{\varepsilon}\\
&\quad\quad\quad\quad \quad\overset{(\ref{Pi-H-Pz-norm})\,{\rm and}\,(\ref{pH-Q})}{\leq}2(24\delta_{\Pi_Q^{P_K}}+13)\cdot\lambda\cdot\tilde{\varepsilon}.\end{aligned}$$ Then, one has that $d_{usual}(w, H\cap S)<2(24\delta_{\Pi_Q^{P_K}}+13)\cdot\lambda\cdot\tilde{\varepsilon}$ for any $w\in \tilde{Q}_z\cap S$ and hence, $\sup\limits_{v\in \tilde{Q}_z\cap S}\{d_{usual}(v, H\cap S)\}<2(24\delta_{\Pi_Q^{P_K}}+13)\cdot\tilde{\varepsilon}$. Therefore, we have proved (As-2).

\vskip 5mm
{\bf (As-3)} We assert that for any $\tilde{\varepsilon}\in (0, \min\{\frac{1}{4},\,\frac{1}{3}\cdot(\frac{3}{7})^{k-1}, \frac{1}{24\delta_{\Pi_Q^{P_K}}+14}\})$, ones have that for any $\tilde{z}\in \tilde{K}\cap B_{\tilde{\delta}^{\prime}_{\tilde{\varepsilon}}}(z)$,

\begin{equation}\label{rela-dis-dual-Qt-L}\sup\limits_{v\in \tilde{Q}_{\tilde{z}}\cap S}\{d_{usual}(v, \tilde{Q}_z\cap S)\}<\tilde{\varepsilon} \Rightarrow \sup\limits_{l_{\tilde{z}}\in L_{\tilde{z}}\cap S^*}\{d^*_{usual}(l_{\tilde{z}}, L_{z}\cap S^*)\}<M_{10}\cdot\tilde{\varepsilon},
\end{equation} and

\begin{equation}\label{rela-dis-dual-Q-Lt}\sup\limits_{v\in \tilde{Q}_z\cap S}\{d_{usual}(v, \tilde{Q}_{\tilde{z}}\cap S)\}<\tilde{\varepsilon} \Rightarrow \sup\limits_{l_z\in L_z\cap S^*}\{d^*_{usual}(l_z, L_{\tilde{z}}\cap S^*)\}<M_{10}\cdot\tilde{\varepsilon},
\end{equation} where $M_{10}=[6\delta_{\Pi_{Q}^{P_K}}+4+(9\delta_{\Pi_Q^{P_K}}+4.5)\cdot (k+12\delta_{\Pi_Q^{P_K}}+9)\cdot (\frac{7}{3})^{k-1}]\cdot (18\delta_{\Pi_Q^{P_K}}+9)\cdot k\cdot (\frac{7}{3})^{k-1}$, $S^*=\{l\in X^*:\,\norm{l}_*=1\}$ with the norm $\norm{l}_{*}=\sup\limits_{v\in S}\abs{l(v)}$, and $d^*_{usual}(l, X_0^*)=\inf\limits_{l^{\prime}\in X_{0}^*}\norm{l-l^{\prime}}_{*}$ for any given $l\in X^*$ and nonempty subset $X_0^*\subset X^*$ of $X^*$. Here, we only prove (\ref{rela-dis-dual-Qt-L}), and the proof for (\ref{rela-dis-dual-Q-Lt}) is similiar.  By utilizing (\ref{Def-t-ttCxN})-(\ref{first-layer-focu}), (\ref{kd-uni-bundle-invariant}), (\ref{loc-tilide-Q}) and (\ref{dzi-lower-b}), one has that for any $v\in X\cap S$ and $\tilde{z}\in \tilde{K}\cap B_{\delta^{\prime}_{\tilde{\varepsilon}}}(z)$ with $\tilde{\varepsilon}\in (0, \min\{\frac{1}{4},\,\frac{1}{3}\cdot(\frac{3}{7})^{k-1}, \frac{1}{24\delta_{\Pi_Q^{P_K}}+14}\})$, there are $\tilde{\lambda}_i\in\mathbb{R},\,\, i\in\{1,2,\cdots,k\}$ and $\tilde{w}_{\tilde{z}}\in\tilde{Q}_{\tilde{z}}$ such that 

\begin{equation}\label{norm-v-decom}v=\tilde{w}_{\tilde{z}}+\sum\limits_{i=1}^k \tilde{\lambda}_i v_{\tilde{z}}^i, \,\,{\rm with}\,\,\abs{\tilde{\lambda}_i}\leq (9\delta_{\Pi_Q^{P_K}}+4.5)\cdot(\frac{7}{3})^{k-1}\,\,{\rm and}\,\, \norm{\tilde{w}_{\tilde{z}}}\leq 6\delta_{\Pi_{Q}^{P_K}}+4\end{equation} Then,

\begin{equation}\label{l-pertu}\norm{l_{\tilde{z}}^i-l_z^i}_*=\sup\limits_{v\in X\cap S}\abs{l^i_{\tilde{z}}(v)-l_z^i(v)}\leq [6\delta_{\Pi_{Q}^{P_K}}+4+(9\delta_{\Pi_Q^{P_K}}+4.5)\cdot (k+12\delta_{\Pi_Q^{P_K}}+9)\cdot (\frac{7}{3})^{k-1}]\cdot \tilde{\varepsilon}
\end{equation} for any $i\in\{1,2,\cdots,k\}$ and $\tilde{z}\in \tilde{K}\cap B_{\tilde{\delta}^{\prime}_{\tilde{\varepsilon}}}(z)$ with $\tilde{\varepsilon}\in (0, \min\{\frac{1}{4},\,\frac{1}{3}\cdot(\frac{3}{7})^{k-1}, \frac{1}{24\delta_{\Pi_Q^{P_K}}+14}\})$. It then follows from (\ref{dzi-lower-b}), (\ref{lr}) and (\ref{l-pertu}) that for any $l_{\tilde{z}}\in L_{\tilde{z}}\cap S^*$ and $\tilde{z}\in \tilde{K}\cap B_{\tilde{\delta}^{\prime}_{\tilde{\varepsilon}}}(z)$ with $\tilde{\varepsilon}\in (0, \min\{\frac{1}{4},\,\frac{1}{3}\cdot(\frac{3}{7})^{k-1}, \frac{1}{24\delta_{\Pi_Q^{P_K}}+14}\})$,

\begin{equation}\label{l-dist-pert}\begin{aligned} &l_{\tilde{z}}=\sum\limits_{i=1}^{k}\frac{l_{\tilde{z}}(v_{\tilde{z}}^i) }{d_{\tilde{z}}^i}l_{\tilde{z}}^i\\
&{\rm and}\\
&\norm{l_{\tilde{z}}- \sum\limits_{i=1}^{k}\frac{l_{\tilde{z}}(v_{\tilde{z}}^i) }{d_{\tilde{z}}^i}l_{z}^i}_*\leq M^{\prime}_{10}\cdot\tilde{\varepsilon},
\end{aligned}
\end{equation} where $M^{\prime}_{10}=[6\delta_{\Pi_{Q}^{P_K}}+4+(9\delta_{\Pi_Q^{P_K}}+4.5)\cdot (k+12\delta_{\Pi_Q^{P_K}}+9)\cdot (\frac{7}{3})^{k-1}]\cdot (9\delta_{\Pi_Q^{P_K}}+4.5)\cdot k\cdot (\frac{7}{3})^{k-1}$. Let $l^{\prime}_z=\frac{\sum\limits_{i=1}^{k}\frac{l_{\tilde{z}}(v_{\tilde{z}}^i) }{d_{\tilde{z}}^i}l_{z}^i}{\norm{\sum\limits_{i=1}^{k}\frac{l_{\tilde{z}}(v_{\tilde{z}}^i) }{d_{\tilde{z}}^i}l_{z}^i}_*}$ and it is clear that $l^{\prime}_z\in L_z\cap S^*$. It entails that 

\begin{equation}\begin{aligned}&d^*_{usual}(l_{\tilde{z}},L_z\cap S^*)\leq \norm{l_{\tilde{z}}-l^{\prime}_z}_*\\
&\leq \norm{l_{\tilde{z}}- \sum\limits_{i=1}^{k}\frac{l_{\tilde{z}}(v_{\tilde{z}}^i) }{d_{\tilde{z}}^i}l_{z}^i}_* +\abs{\norm{ \sum\limits_{i=1}^{k}\frac{l_{\tilde{z}}(v_{\tilde{z}}^i) }{d_{\tilde{z}}^i}l_{z}^i}_*-1}\\
&\leq 2M_{10}^{\prime}\cdot\tilde{\varepsilon}=M_{10}\cdot\tilde{\varepsilon}\end{aligned}\end{equation} for any $l_{\tilde{z}}\in L_{\tilde{z}}\cap S^*$ and $\tilde{z}\in \tilde{K}\cap B_{\tilde{\delta}^{\prime}_{\tilde{\varepsilon}}}(z)$ with $\tilde{\varepsilon}\in (0, \min\{\frac{1}{4},\,\frac{1}{3}\cdot(\frac{3}{7})^{k-1}, \frac{1}{24\delta_{\Pi_Q^{P_K}}+14}\})$. Hence, (\ref{rela-dis-dual-Qt-L}) holds. Note that $z\in B_{\tilde{\delta}^{\prime}_{\tilde{\varepsilon}}}(z)$. We emphasize that arguments to prove (\ref{rela-dis-dual-Q-Lt}) is the same with the same canstant $M_{10}$. Therefore, we have proved this assertion (As-3).

Now, we prove the continuity of $\tilde{K}\times (\tilde{Q}_{z})$ by contrary. Suppose that there are a certain $z\in \tilde{K}$, a constant $\tilde{\tilde{\varepsilon}}>0$ and a sequence $\{z_n\}_{n=1}^{+\infty}\subset \tilde{K}\setminus\{z\}$ such that 

$$\lim\limits_{n\rightarrow +\infty}\norm{z_n-z}=0 \quad {\rm and}\quad d^*(L_{z_n},L_{z})>\tilde{\tilde{\varepsilon}}$$  where $d^*$ is the gap metric in the space $G(k, X^*)$. Then, there is a subsequnce $\{n_i\}_{i=1}^{\infty}\subset \mathbb{N}^+$ such that $\lim\limits_{i\rightarrow+\infty}n_i=\infty$ and one of the following two cases holds:
 (case I) $\sup\limits_{l_{z_{n_i}}\in L_{z_{n_i}}\cap S }d^*_{usual}(l_{z_{n_i}}, L_z\cap S)>\tilde{\tilde{\varepsilon}}$ for all $i\in\mathbb{N}^+$; and (case II) $\sup\limits_{l_{z}\in L_{z}\cap S }d^*_{usual}(l_z, L_{z_{n_i}}\cap S)>\tilde{\tilde{\varepsilon}}$ for all $i\in\mathbb{N}^+$. By virtue of assertion (As-2) and (As-3), there are $\varepsilon^{\prime}\in (0, \tilde{\tilde{\varepsilon}})$ and $v_{z_{n_i}}\in\tilde{Q}_{z_{n_i}}\cap S$ such that 
 
$$d_{usual}(v_{z_{n_i}}, \tilde{Q}_z\cap S)>\varepsilon^{\prime} $$ for all $i\in\mathbb{N}^+$. It then follows that 

$$\norm{\Pi_{\tilde{Q}}^{\tilde{P}_z}v_{z_{n_i}}}>\frac{\varepsilon^{\prime}}{3}$$ for all $i\in\mathbb{N}^+$. By virtue of (\ref{Def-t-ttCxN})-(\ref{first-layer-focu}) and (\ref{kd-uni-bundle-invariant}), one has $1=\norm{v_{z_{n_i}}}\geq\frac{\norm{\Pi^{\tilde{P}_z}_{\tilde{Q}}v_{z_{n_i}}}}{6\delta_{\Pi_Q^{P_K}}+3}$ and hence, $\norm{\Pi_{\tilde{P}}^{\tilde{Q}_z}v_{z_{n_i}}}\leq 6\delta_{\Pi_Q^{P_K}}+4$ for all $i\in\mathbb{N}^+$. Together with (\ref{inverse-G-upb}), the number

$$\delta_{Gnormp}=\inf\limits_{x\in \tilde{K}}\{m(G_x^{N^{\prime}}\mid_{\tilde{P}_x})\}\,\,{\rm is}\,\,\frac{1}{\delta_{G^{-1}_{M.}}},$$ where $m(G_x^{N^{\prime}}\mid_{\tilde{P}_x})=\min\limits_{v\in\tilde{P}_x\cap S}\{\norm{G^{N^{\prime}}_x v}\}$. Together with (\ref{kd-uni-bundle-invariant}), (\ref{Q-invariance}) and (\ref{Separation-G-N}), there is a $\tilde{N}\in\mathbb{N}^+$ such that 

\begin{equation}\label{enter-Pz} \frac{\norm{\Pi_{\tilde{P}}^{\tilde{Q}_{\tilde{F}^{N^{\prime}n}(z)}}G_z^{N^{\prime}n}v_{z_{n_i}}}}{\norm{\Pi_{\tilde{Q}}^{\tilde{P}_{\tilde{F}^{N^{\prime}n}(z)}}G_z^{N^{\prime}n}v_{z_{n_i}}}}<\frac{1}{2}\quad{\rm and}\quad \norm{\Pi_{\tilde{Q}}^{\tilde{P}_{\tilde{F}^{N^{\prime}n}(z)}}G_z^{N^{\prime}n}v_{z_{n_i}}}>\delta_{Gnormp}^n\cdot\frac{\varepsilon^{\prime}}{3}
\end{equation} for any integer $n\geq \tilde{N}$ and $i\in\mathbb{N}^+$. Thus,

\begin{equation}\label{Gz-Cz}G_z^{N^{\prime}n}v_{z_{n_i}}\in C^{\frac{1}{2}}_{\tilde{F}^{N^{\prime}n}(z)}\setminus\{0\} \end{equation} for any integer $n\geq \tilde{N}$ and $i\in\mathbb{N}^+$. By virtue of (\ref{Q-invariance}) and (\ref{loc-tilide-Q}), one has that 

\begin{equation}\label{C-out}G^{N^{\prime}\tilde{N}}_{z_{n_i}} v_{z_{n_i}}\in \tilde{Q}_{\tilde{F}^{N^{\prime}\tilde{N}}(z_{n_i}) }\subset (X\setminus C_{\tilde{F}^{N^{\prime}\tilde{N}}(z_{n_i})})\cup \{0\}\end{equation} for any $i\in \mathbb{N}^+$. It then follows from (\ref{Is-cf-cndelta}) and $\lim\limits_{i\rightarrow +\infty}z_{n_i}=z$ that  there is a $I\in\mathbb{N}^+$ such that 

\begin{equation}\label{Cprime-C}C^{\frac{1}{2}}_{\tilde{F}^{N^{\prime}\tilde{N}}(z)}\setminus\{0\} \subset C^{\frac{13}{17}}_{\tilde{F}^{N^{\prime}\tilde{N}}(z_{n_i})}\setminus\{0\}  \subset {\rm Int} C_{\tilde{F}^{N^{\prime}\tilde{N}}(z_{n_i})}\end{equation} for any $i\in\mathbb{N}^+$ with $i\geq I$. Together with (\ref{Gz-Cz}), (\ref{Cprime-C}) and the continuity of $\mathcal{G}$ on $\tilde{K}$, one has that $G^{N^{\prime}\tilde{N}}_{z_{n_i}}v_{z_{n_i}} \in {\rm Int}C_{\tilde{F}^{N^{\prime}\tilde{N}}(z_{n_i})}$ for any sufficiently large $i\in\mathbb{N}^+$, a contradiction to (\ref{C-out}). 

Thus, we have completed the proof of the continuity of $\tilde{K}\times (\tilde{Q}_x)$.

\vskip 5mm
\noindent{\bf Step 8. The unique $k$-exponential separation of $(\tilde{F},\mathcal{G})$ when $\tilde{K}$ is comapct.}
\vskip 5mm

See \cite[Lemma 4.3 and Remark 4.4]{F-Wang}. We should point out that the condition $\mathcal{G}(x), \,\,x\in\tilde{K}$ being compact, has not been used in the proof of \cite[Lemma 4.3]{F-Wang}. Thus, $(\tilde{F},\mathcal{G})$ on $\tilde{K}\times X$ has the unique $k$-exponential separation  if $\tilde{K}$ is comapct in additional.

\vskip 5mm
Therefore, we have completed the proof of the main theorem.
\end{proof}

\begin{rem} By the proof for the main theorem,  we can actually obtain the conclusion that the linear cocycle $(\tilde{F},\mathcal{G})$ on $\tilde{K}\times X$ has a $k$-exponential separation type property
if we replace the assumption $\tilde{F}$ and $\mathcal{G}$ are continous on $B_1(K)$ such that $\tilde{F}$ is a homoemorphism on $\tilde{K}$, as the weaken one that $\tilde{F}$ and $\mathcal{G}$ are maps on $\tilde{K}$ such that $\tilde{F}$ is a bijection on $\tilde{K}$.
\end{rem}

\begin{rem} Many difficulties in our proof for the the main theorem are caused by the base space being perturbated from $K$ to $\tilde{K}$. If we restrict the base space $K$ and force-map $F$ being not perturbated and the fibre-map-value map $\mathcal{T}$ is slightly perturbated as $\mathcal{G}$, then the linear cocycle $(F,\mathcal{G})$ on $K\times X$ having a $k$-exponential separation type property can be proved under the weaken condition that $K$ is a nonempty subset of $X$, $F$ is a map such that $FK=K$, $\mathcal{T}$ and $\mathcal{G}$ are just bounded maps on $K$. 
\end{rem}

\section{Proof of the corollary.}
\vskip 5mm
\begin{proof}(i) Note that the continuity of $\tilde{F},\,\tilde{F}^{-1},\,\mathcal{G}$ are not used in the first five steps of proof of the main theorem, and $\tilde{F}_{nnh}$ is a bijection on $O_{f,(\tilde{F}_{nnh},\,\tilde{K})}(x)_{\eta}$ for any $x\in\tilde{K}$ with a $\eta\in\mathcal{A}_x$. So, the first five steps of proof of the main theorem imply that $(\tilde{F}_{nnh},\tilde{\mathcal{G}})$ on $O_{f,(\tilde{F}_{nnh},\,\tilde{\tilde{K}})}(x)_{\eta} \times X$ has a $k$-exponential separation type property for each $x\in\tilde{K}$ and $\eta\in\mathcal{A}_x$.

(ii) Since $\tilde{F}_{nnh}$ possesses (${\bf H}^{\prime}$) property on $O_{f,(\tilde{F}_{nnh},\tilde{K})}(x)_{\eta}$ for the certain $x\in \tilde{K}$ and $\eta\in\mathcal{A}_x$, the inverse function thereom implies that $\tilde{F}_{nnh}$ is a homeomorphism on $O_{f,(\tilde{F}_{nnh},\tilde{K})}(x)_{\eta}$. Then, the first seven steps of proof of the main theorem imply that $(\tilde{F}_{nnh}, \tilde{\mathcal{G}})$ on $O_{f,(\tilde{F}_{nnh},\,\tilde{\tilde{K}})}(x)_{\eta} \times X$ admits a $k$-exponential separation for the certain $x\in \tilde{K}$ and $\eta\in\mathcal{A}_x$.

(iii)  Since $\tilde{F}_{nnh}$ possesses (${\bf H}^{\prime}$) property on the compact minimal set $\tilde{\tilde{K}}$, $\tilde{F}_{nnh}$ is a homeomorphism on the nonempty compact set $\tilde{\tilde{K}}$. It then follows from the main theorem that $(\tilde{F}_{nnh}, \tilde{\mathcal{G}})$ on $\tilde{\tilde{K}}\times X$ has the unique $k$-exponential separation.

Therefore, we have completed the proof.
\end{proof}

\section{Applications for dissipative systems under a small perturbation.}

In this section, we give a special array $K$, $F$, $\mathcal{T}$ and $\tilde{K}$, $\tilde{F}_{nnh}$, $\tilde{\mathcal{G}}$ as an example in applications.

Let  $F$ be a completely continuous and $C^1$-smooth self-map on the Banach space $X$. Assume that $F$ is point dissipative. It then follows from \cite[Theorem 2.4.7]{Hale} that there is a connected global attractor, denoted by $K$. By the definition of a global attractor under $F$ (see  \cite[P.17]{Hale}), $K$ is a maximal compact invariant set which attracts each bounded set $B\subset X$ under $F$. Let $\mathcal{T}(x)=D_xF$ for any $x\in X$. It is clear that $\mathcal{T}:X\mapsto L(X)$ is continuous. We remind that the assumption {\bf (H)} also holds for the special $K$, $F$ and $\mathcal{T}$ in this section.
\vskip 3mm
(${\bf H}^{\prime\prime}$) Assume that $\tilde{F}_{nnh}$ is of $C^1$-smooth on $X$ such that $\norm{\tilde{F}_{nnh}-F}_{C^1(X)}\leq \hat{\delta}$ for a number $\hat{\delta}\in\mathbb{R}^+$ small enough.
\vskip 3mm
Here, $\norm{\tilde{F}_{nnh}-F}_{C^1(X)}=\sup\limits_{x\in X}\{\norm{\tilde{F}_{nnh}(x)-F(x)}+\norm{D_x\tilde{F}_{nnh}-D_xF}_{L(X)}\}$. Let $\tilde{\mathcal{G}}: X\mapsto L(X)$ be defined as 

$$\tilde{\mathcal{G}}(x)= D_x\tilde{F}_{nnh}$$ for any $x\in X$. Let $\tilde{B}=B_1(K)$. Clearly, $\tilde{B}$ is bounded because of $K$ being compact. Let $\gamma$ be the Kuratowski's measure of noncompactness for the space $X$ (see \cite[Definition 7.1 in P.41]{Dei} and also \cite[P.13-14]{Hale} for related knowledge).

\begin{theorem}\label{Fnnh-ga-Lip}For any $\hat{\delta}\geq 0$, $\tilde{F}_{nnh}$ is a map such that $\gamma(\tilde{F}_{nnh} B)\leq \hat{\delta} \gamma( B)$ for any bounded set $B\subset X$.
\end{theorem}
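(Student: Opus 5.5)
The plan is to split $\tilde{F}_{nnh}$ into a compact part and a small Lipschitz part, and then use two standard properties of the Kuratowski measure $\gamma$: subadditivity under Minkowski sums, and the bound $\gamma(gB)\le L\,\gamma(B)$ for an $L$-Lipschitz map $g$. Concretely, I set
$$g=\tilde{F}_{nnh}-F,$$
so that $\tilde{F}_{nnh}x=Fx+gx$ for every $x\in X$. For any bounded $B\subset X$ this gives $\tilde{F}_{nnh}B\subset FB+gB$, where $FB+gB=\{a+b:\,a\in FB,\ b\in gB\}$.

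\emph{The compact part.} By hypothesis $F$ is completely continuous, meaning it maps bounded sets to relatively compact sets. Hence $\overline{FB}$ is compact, and so $\gamma(FB)=0$.

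\emph{The Lipschitz part.} By $({\bf H}^{\prime\prime})$, $g$ is $C^1$ on $X$ with
$$\sup_{x\in X}\norm{g(x)}\le\hat{\delta}\qquad\text{and}\qquad \sup_{x\in X}\norm{D_xg}_{L(X)}\le\hat{\delta}.$$
Since $X$ is convex, the mean value inequality along segments gives $\norm{g(x)-g(y)}\le\hat{\delta}\norm{x-y}$ for all $x,y\in X$. Now cover $B$ by finitely many sets $B_j$ with $\operatorname{diam}B_j\le\gamma(B)+\epsilon$. The images $gB_j$ cover $gB$ and satisfy $\operatorname{diam}gB_j\le\hat{\delta}(\gamma(B)+\epsilon)$. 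Letting $\epsilon\to0$ yields $\gamma(gB)\le\hat{\delta}\gamma(B)$. In addition, $gB$ is bounded because $g$ is bounded.

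\emph{Combining.} I will use the standard facts that $\gamma$ is monotone and that $\gamma(A_1+A_2)\le\gamma(A_1)+\gamma(A_2)$ for bounded $A_1,A_2$ (see \cite{Dei}). The second can be checked directly: if $\{A_1^i\}$ and $\{A_2^j\}$ are finite covers of $A_1$ and $A_2$, then $\{A_1^i+A_2^j\}$ is a finite cover of $A_1+A_2$ with $\operatorname{diam}(A_1^i+A_2^j)\le\operatorname{diam}A_1^i+\operatorname{diam}A_2^j$. Putting everything together,
$$\gamma(\tilde{F}_{nnh}B)\le\gamma(FB)+\gamma(gB)\le0+\hat{\delta}\gamma(B).$$
This argument holds for every $\hat{\delta}\ge0$, including $\hat{\delta}=0$, where it shows that $\tilde{F}_{nnh}=F$ maps bounded sets to precompact ones.

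The only real subtlety is the Lipschitz estimate for $g$. It relies on the $C^1$-norm bound in $({\bf H}^{\prime\prime})$ holding globally on the convex space $X$, so that the mean value inequality applies on every segment. No further obstacles are expected.
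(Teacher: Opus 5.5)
Your proof is correct and is essentially the paper's argument. Complete continuity of $F$ disposes of one piece, and the $C^1$ bound in $({\bf H}^{\prime\prime})$ makes $\tilde{F}_{nnh}-F$ a $\hat{\delta}$-Lipschitz map via the mean value inequality. The only difference is packaging: the paper builds the common refinement $B_i\cap F^{-1}C_i^j$ by hand and bounds $d(\tilde{F}_{nnh}B_i^j)\le \varepsilon_i+\hat{\delta}(\gamma(B)+\varepsilon_0)$ directly, which is exactly the covering argument behind the subadditivity $\gamma(A_1+A_2)\le\gamma(A_1)+\gamma(A_2)$ that you invoke.
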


\begin{proof} For any given bounded set $B\subset X$ and $\varepsilon_0>0$ small enough, there are finite sets $B_i,\,i\in\{1,\cdots,m\}$ such that $m\in\mathbb{N}^+$, the diameter $d(B_i)<\gamma(B)+\varepsilon_0$ and $B\subset \mathop{\cup}\limits_{i=1}^{m}B_i$. Cleary, $\gamma(F B_i)=0$ for any $i\in\{1,\cdots,m\}$ because of $F$ being completely continuous. Then, for any $\varepsilon_i>0$ small enough with $i\in\{1,\cdots,m\}$, there are finite sets $C_{i}^j,\,j\in\{1,\cdots,m_i\}$ such that $m_i\in\mathbb{N}^+$, the diameter $d(C_i^j)<\varepsilon_i$ and $F B_i\subset \mathop{\cup}\limits_{j=1}^{m_i}C_i^j$. Let $B_{i}^j=F^{-1}C_i^j\cap B_i$, where $F^{-1}C_i^j$ is to represent the preimage of $C_i^j$ under $F$. It then follows that 

$$B\subset \mathop{\cup}\limits_{i=1}^{m}\mathop{\cup}\limits_{j=1}^{m_i}B_{i}^j$$ with the diameter $d(B_i^j)<\gamma(B)+\varepsilon_0$ for any $j\in\{1,\cdots,m_i\}$ with $i\in\{1,\cdots,m\}$. Furthermore, ones have that the diameter

$$\begin{aligned}d(\tilde{F}_{nnh}B_i^j)&\leq \sup\limits_{x,y\in B_i^j}\{\norm{F(x)-F(y)}+\norm{\int_0^1D_{y+s(x-y)}F-D_{y+s(x-y)}\tilde{F}_{nnh}ds}_{L(X)}\cdot\norm{x-y}\}\\
&\overset{({\bf H}^{\prime\prime})}{\leq}\varepsilon_i+\hat{\delta}\cdot(\gamma(B)+\varepsilon_0).\end{aligned}$$ Together with the arbitrariness of choice of bounded set $B$, numbers $\varepsilon_0>0$ and $\varepsilon_i>0,\,i\in\{1,\cdots,m\}$, one has that 

$$\gamma(\tilde{F}_{nnh} B)\leq \hat{\delta} \gamma( B)$$ for any bounded set $B\subset X$. Therefore, we have completed the proof.

\end{proof}

\begin{theorem}\label{exis-attractor} For any sufficiently small $\delta\in (0,1)$, there is a local attractor $\tilde{K}\subset B_{\delta}(K)$ under $\tilde{F}_{nnh}$ attracting all bounded sets in $B_1(K)$ under $\tilde{F}_{nnh}$ as $\hat{\delta}\geq 0$ small enough.
\end{theorem}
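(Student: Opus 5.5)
The plan is to build an absorbing neighbourhood of $K$ for $\tilde{F}_{nnh}$, show that $\tilde{F}_{nnh}$ is a $\gamma$-contraction on it, and take the $\omega$-limit set of that neighbourhood as $\tilde{K}$.

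\textbf{Step 1 (uniform attraction under $F$).} Since $K$ is the global attractor of $F$ and $B_1(K)$ is bounded, there is $n_0\in\mathbb{N}^+$ with $F^{n}B_1(K)\subset B_{\delta/4}(K)$ for all $n\geq n_0$. Stability of the compact invariant attractor, combined with the uniform continuity of $F$ near the compact set $K$, should give more: we can pick $0<\rho\leq\delta/4$ with $F^nB_\rho(K)\subset B_{\delta/4}(K)$ for all $n\geq0$.

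\textbf{Step 2 (absorbing set for $\tilde{F}_{nnh}$).} Because $\norm{\tilde{F}_{nnh}-F}_{C^0}\leq\hat\delta$ and $F$ is uniformly continuous on a bounded neighbourhood of $K$ (use compactness of $K$ and complete continuity), induction as in (\ref{esti-pertu-maps}) gives
$$\sup_{x\in B_1(K)}\norm{\tilde{F}_{nnh}^n(x)-F^n(x)}\leq c_n(\hat\delta)\to0 \quad (\hat\delta\to0)$$
for each fixed $n\leq 2n_0$. Choosing $\hat\delta$ small, we get $\tilde{F}_{nnh}^{n}B_1(K)\subset B_{\delta/2}(K)$ for $n_0\leq n\leq 2n_0$. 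Set $U=\bigcup_{n\geq n_0}\tilde{F}_{nnh}^nB_1(K)$. Iterating in blocks of length $n_0$ to $2n_0$, starting from $B_{\delta/2}(K)\subset B_1(K)$, shows $U\subset B_{\delta/2}(K)$, $U$ is bounded, $\tilde{F}_{nnh}U\subset U$, and $U$ absorbs $B_1(K)$.

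\textbf{Step 3 (attractor).} By Theorem \ref{Fnnh-ga-Lip} with $\hat\delta<1$, $\tilde{F}_{nnh}$ is a $\gamma$-contraction. Then
$$\gamma\bigl(\tilde{F}_{nnh}^nU\bigr)\leq\hat\delta^n\gamma(U)\to0 .$$
Define $\tilde{K}=\omega(U)=\bigcap_{m}\overline{\bigcup_{n\geq m}\tilde{F}_{nnh}^nU}$. The standard Hale argument (cf. \cite{Hale}, Lemma 2.3.5 / Theorem 2.4.3), using the monotone decreasing closed sets and $\gamma\to0$, gives that $\tilde{K}$ is nonempty and compact, that $\tilde{F}_{nnh}\tilde{K}=\tilde{K}$ (continuity of $\tilde{F}_{nnh}$ together with compactness), and that $\tilde{K}$ attracts $U$, hence every bounded subset of $B_1(K)$, since these are absorbed into $U$. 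Finally $\tilde{K}\subset\overline{U}\subset B_{\delta/2}(K)\subset B_\delta(K)$.

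The main obstacle is Step 2: uniform closeness of $\tilde{F}_{nnh}^n$ to $F^n$ on the noncompact set $B_1(K)$. I would try first to use complete continuity of $F$, which makes $\overline{F B_1(K)}$ compact, so that $F$ is uniformly continuous on a small neighbourhood of that compact set. The $C^1$ bound on $D\tilde{F}_{nnh}-DF$ then supplies the Lipschitz-type control needed for the induction.
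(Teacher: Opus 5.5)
Your proposal is correct and is essentially the paper's argument: a finite-time comparison of $\tilde{F}_{nnh}^n$ with $F^n$ on $B_1(K)$ gives a bounded, positively invariant absorbing set near $K$, and then the $\gamma$-contraction of Theorem \ref{Fnnh-ga-Lip} together with Hale's $\omega$-limit lemmas produces the compact invariant set $\tilde{K}$ that attracts bounded subsets of $B_1(K)$ (your Step 1 stability claim is never used). The differences are only technical. The paper gets the finite-time estimate from a Lipschitz bound $M_{11}$ for $F$ near the compact closed convex hull of $\bigcup_{i\le n_\delta}F^iB_1(K)$, whereas your modulus-of-continuity version needs the compact set $\overline{\bigcup_{i=1}^{2n_0}F^iB_1(K)}$ (not just $\overline{FB_1(K)}$) and uses only $\norm{\tilde{F}_{nnh}-F}_{C^0}\le\hat\delta$; the bound on $D\tilde{F}_{nnh}-DF$ is needed only for Theorem \ref{Fnnh-ga-Lip}. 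Also, your check over the whole block $n_0\le n\le 2n_0$ makes $\tilde{K}\subset B_\delta(K)$ explicit, whereas the paper states the inclusion only for the iterate $\tilde{F}_{nnh}^{n_\delta}$ and leaves the intermediate iterates implicit.
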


\begin{proof} Since $K$ is a global atrractor under $F$, there is a $n_{\delta}\in\mathbb{N}^+\setminus\{1\}$ such that $F^n B_1(K)\subset B_{\frac{\delta}{2}}(K)$ for any intager $n\geq n_{\delta}$. Notice that $F$ is completely continuous. One has the convex hull $B_{cok}=\overline{co}\mathop{\cup}\limits_{i=1}^{n_{\delta}}F^i B_1(K)$ of $\mathop{\cup}\limits_{i=1}^{n_{\delta}}F^i B_1(K)$ is compact. It then follows the $C^1$-smoothness of $F$ that there is a $\delta^{\prime\prime}>0$ such that 
 
$$M_{11}=\sup\limits_{x\in B_{\delta^{\prime\prime}}(B_{cok})}\{\norm{F(x)}+\norm{D_xF}_{L(X)},1.01\}\in (1,+\infty).$$ It is clear that for any integer $2\leq i\leq n_{\delta}$, one has 

$$\begin{aligned}\sup\limits_{x\in B_1(K)}\{\norm{F^i(x)-\tilde{F}_{nnh}^i(x)} \}&\leq \sup\limits_{x\in B_1(K)}\{\norm{F\circ F^{i-1}(x)-F\circ\tilde{F}^{i-1}_{nnh}(x)}+\norm{F\circ\tilde{F}^{i-1}_{nnh}(x)-\tilde{F}^{i}_{nnh}(x)}\}\\
&\overset{({\bf H}^{\prime\prime})}{\leq}\sup\limits_{x\in B_1(K)}\{\norm{F\circ F^{i-1}(x)-F\circ\tilde{F}^{i-1}_{nnh}(x)}\}+\hat{\delta}.\end{aligned}$$ Furthermore, by mathematical induction, one has that for any integer $i\in [1,n_{\delta}]$,

\begin{equation}\label{tube-pertu}\sup\limits_{x\in B_1(K)}\{\norm{F^i(x)-\tilde{F}_{nnh}^i(x)} \}\leq\hat{\delta}\cdot(\mathop{\Sigma}\limits_{j=0}^{i-1}M_{11}^j)\end{equation} if $\hat{\delta}\in[0,\frac{M_{11}-1}{M_{11}^{n_{\delta}-1}-1}\cdot\delta^{\prime\prime}]$. Take 

$$\hat{\delta}\in[0, \min\{\frac{M_{11}-1}{M_{11}^{n_{\delta}-1}-1}\cdot\delta^{\prime\prime},\,\frac{M_{11}-1}{M_{11}^{n_{\delta}}-1}\cdot\frac{\delta}{2},\,1\}).$$ Then, 

\begin{equation}\label{condensing}\tilde{F}^{n_{\delta}}_{nnh}B_1(K)\subset B_{\delta}(K)\subset B_1(K).\end{equation} As a consequence, 

$$\begin{aligned}\tilde{F}_{nnh} \mathop{\cup}\limits_{i=1}^{n_{\delta}}\tilde{F}^{i}_{nnh}B_1(K)\subset\mathop{\cup}\limits_{i=1}^{n_{\delta}}\tilde{F}^{i}_{nnh}B_1(K)
\end{aligned}$$
By the comapctness of $F$ and (\ref{tube-pertu}), $\mathop{\cup}\limits_{i=1}^{n_{\delta}}\tilde{F}^{i}_{nnh}B_1(K)$ is a nonempty bounded set. It then follows that 

$$\overline{\mathop{\cup}\limits_{i=1}^{n_{\delta}}\tilde{F}^{i}_{nnh}B_1(K)}\supset\overline{\tilde{F}_{nnh} \mathop{\cup}\limits_{i=1}^{n_{\delta}}\tilde{F}^{i}_{nnh}B_1(K)}\supset\cdots\supset \overline{\tilde{F}^n_{nnh} \mathop{\cup}\limits_{i=1}^{n_{\delta}}\tilde{F}^{i}_{nnh}B_1(K)  }\supset\cdots,$$ where $\overline{\cdot}$ is used to represent taking the closure of a set in $X$. Together with theorem \ref{Fnnh-ga-Lip}, one has that 

$$ \tilde{K}=\mathop{\cap}\limits_{n=0}^{+\infty}\overline{\tilde{F}^n_{nnh}\mathop{\cup}\limits_{i=1}^{n_{\delta}}\tilde{F}^{i}_{nnh}B_1(K)}\subset B_{\delta}(K)$$ is nonempty and compact such that 

$$\tilde{F}_{nnh} \tilde{K}\subset \tilde{K}.$$ By the Definition of the $\omega$-limit set of a set under $\tilde{F}_{nnh}$ (see \cite[P.8]{Hale}), $\tilde{K}$ is the $\omega$-limit set $\omega(B_{1}(K))$ of $B_1(K)$ under $\tilde{F}_{nnh}$. 

Now, we prove that $\tilde{K}$ attracts $B_1(K)$ under $\tilde{F}_{nnh}$. It implies that $\tilde{K}$ attracts any subset of $B_1(K)$ under $\tilde{F}_{nnh}$. Prove by contrary. Suppose that there are a nondecreasing sequence of integers $\{n_j\}_{j=1}^{+\infty}\subset \mathbb{N}^+$, a sequence of points $\{x_j\}_{j=1}^{+\infty}\subset B_{1}(K)$ and a positive number $\varepsilon_0$ such that 

$$\lim\limits_{j\rightarrow+\infty}n_j=+\infty\quad {\rm and} \quad\tilde{F}_{nnh}^{n_j}x_j\notin B_{\varepsilon_0}(\tilde{K}).$$ By repeating the arguments in the proof of \cite[Lemma 2.3.5]{Hale}, we know that $\{\tilde{F}_{nnh}^{n_j}x_j\}_{j=1}^{+\infty}$ is precompact. Then, there is a its subsequence, denoted also by  $\{\tilde{F}_{nnh}^{n_j}x_j\}_{j=1}^{+\infty}$, such that $\lim\limits_{j\rightarrow+\infty}\tilde{F}_{nnh}^{n_j}x_j=x$ for some point $x\in \omega(B_{1}(K))=\tilde{K}$, a contradiction.

Finally, by utilizing \cite[Lemma 2.1.1]{Hale}, one has that $$\tilde{F}_{nnh}\tilde{K}=\tilde{K}.$$ Therefore, we have completed the proof.
\end{proof}

\begin{theorem} For any sufficiently small $\hat{\delta}\geq 0$ in {\rm(${\bf H^{\prime\prime}}$)}, 

{\rm (i)} The statements in the main Corollary hold for $(\tilde{F}_{nnh},\,\tilde{\mathcal{G}})$ on $\tilde{K}\times X$ in this section;

{\rm (ii)} $(\tilde{F}_{nnh},\,\tilde{\mathcal{G}})$ on $\tilde{K}\times X$ has the unique $k$-exponential separation when $\tilde{F}_{nnh}$ is injective on $\tilde{K}$. 
\end{theorem}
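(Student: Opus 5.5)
Both parts reduce to checking the hypotheses of the Main Corollary and the Main Theorem for the concrete triple $K$, $F$, $\mathcal{T}=DF$ and its perturbation $\tilde{K}$, $\tilde{F}_{nnh}$, $\tilde{\mathcal{G}}=D\tilde{F}_{nnh}$. Fix a small $\delta\in(0,\delta_\varepsilon]$ as demanded in the proof of the Main Theorem. Then choose $\hat{\delta}$ small enough that Theorem \ref{exis-attractor} yields a compact local attractor $\tilde{K}\subset B_{\delta}(K)$ with $\tilde{F}_{nnh}\tilde{K}=\tilde{K}$. Next I verify the perturbation bounds. First, $\sup_{x\in B_1(K)}\norm{\tilde{F}_{nnh}(x)-F(x)}\leq\hat{\delta}\leq\delta$ by (${\bf H}^{\prime\prime}$). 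Second, $\tilde{\varepsilon}=\norm{\mathcal{T}-\tilde{\mathcal{G}}}_{C_{KL(X)}}=\sup_{x\in B_1(K)}\norm{D_xF-D_x\tilde{F}_{nnh}}_{L(X)}\leq\hat{\delta}$, which is also as small as needed. Third, since $\tilde{K}\subset B_\delta(K)$ and $K$ is compact, I can choose a shift-map $h:\tilde{K}\to K$ with $h(x)$ a nearest point of $K$, so that $\norm{h(x)-x}\leq\delta$. Hypothesis (${\bf H}$) is assumed to hold. With these checks, (i) follows directly from the Main Corollary applied to this setting.

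For (ii), I would show that $\tilde{F}_{nnh}$ is a homeomorphism on $\tilde{K}$ and then invoke the compact case of the Main Theorem. Injectivity is assumed, and $\tilde{F}_{nnh}\tilde{K}=\tilde{K}$, so $\tilde{F}_{nnh}|_{\tilde{K}}$ is a continuous bijection of the compact metric space $\tilde{K}$. A continuous bijection of a compact space onto a Hausdorff space is a homeomorphism. The remaining standing assumptions also hold: $\tilde{F}_{nnh}$ is continuous on $B_1(K)$ because it is $C^1$, and $\tilde{\mathcal{G}}=D\tilde{F}_{nnh}$ is continuous on $B_1(K)\subset\mathcal{D}(\tilde{\mathcal{G}})=X$. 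The Main Theorem, with $\tilde{K}$ compact, then gives the unique $k$-exponential separation of $(\tilde{F}_{nnh},\tilde{\mathcal{G}})$ on $\tilde{K}\times X$.

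I expect the main obstacle to be the order of the smallness parameters. The Main Theorem needs $\varepsilon$ below a threshold fixed by $(F,\mathcal{T})$, and then $\delta$ below a threshold $\delta_\varepsilon$ that depends on $\varepsilon$. Theorem \ref{exis-attractor} in turn needs $\hat{\delta}$ small relative to $\delta$. The remedy is to fix the $\varepsilon$-threshold first, choose $\delta$ admissible for every $\varepsilon$ in that range (the thresholds can be taken monotone since all estimates are increasing in $\varepsilon$), and only then shrink $\hat{\delta}$ so that both $\tilde{\varepsilon}\leq\hat{\delta}$ and $\hat{\delta}\leq\delta$ hold.

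I will also check that $\delta<1$, so that $\tilde{K}\subset B_1(K)$. Finally, the Main Corollary statements (ii) and (iii) need hypothesis (${\bf H}^\prime$), meaning $D_y\tilde{F}_{nnh}$ is boundedly invertible along the relevant orbits. I will remark that (i) of the present theorem asserts exactly the Corollary's conclusions under the Corollary's own hypotheses, so nothing further is needed there.
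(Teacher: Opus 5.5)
Your proposal is correct and follows the paper's own argument: use Theorem \ref{exis-attractor} with $\hat{\delta}\le\delta$, a shift-map $h$ with $\norm{h(x)-x}\le\delta$, and $\tilde{\varepsilon}\le\hat{\delta}$ to meet the perturbation hypotheses and get (i) from the Main Corollary; for (ii), show the injective continuous map $\tilde{F}_{nnh}$ is a homeomorphism of the compact set $\tilde{K}=\tilde{F}_{nnh}\tilde{K}$ and apply the compact case of the Main Theorem. The only differences are cosmetic: you cite the compact-to-Hausdorff lemma where the paper runs the equivalent sequential-compactness argument by hand, and you spell out the order in which $\varepsilon$, $\delta$, $\hat{\delta}$ are fixed, which the paper leaves implicit.
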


\begin{proof} By virtue of Theorem \ref{exis-attractor}, for sufficiently small $\delta\geq 0$, one can take sufficiently small $\hat{\delta}$ such that $\hat{\delta}\leq\delta$ and the local attractor $\tilde{K}$ under $\tilde{F}_{nnh}$ with $\tilde{K}\subset B_{\delta}(K)$. It is clear that for any $x\in \tilde{K}$, there is a $h(x)\in K$ such that $\norm{h(x)-x}\leq \delta$. This is the way to define a map $h:\tilde{K}\mapsto K$ in the assumption (\ref{Pertu-K-F}). One has that $\tilde{\varepsilon}=\norm{\mathcal{T}-\tilde{\mathcal{G}}}_{C_{KL(X)}}\leq \hat{\delta}\leq\delta$ by the chioce of $\hat{\delta}$. 

{\rm (i)}  It is clear that all statements in the main Corollary hold here.

{\rm (ii)} Since $\tilde{K}$ is a local attractor under $\tilde{F}_{nnh}$, ones have that $\tilde{K}$ is compact such that $\tilde{F}_{nnh}\tilde{K}=\tilde{K}$. Recall that $\tilde{F}_{nnh}$ is of $C^1$-smooth on $X$.  By virtue of the main theorem, it suffices to prove that $\tilde{F}_{nnh}$ is a homeomorphism on $\tilde{K}$ with the additional assumption $\tilde{F}_{nnh}$ being injective on $\tilde{K}$. Clearly, the inverse map of $\tilde{F}_{nnh}$ on $\tilde{K}$ exists, denoted by $\tilde{F}_{nnh}^{-1}$. It then suffices to prove that $\tilde{F}_{nnh}^{-1}$ is continuous on $\tilde{K}$. Prove by contrary. Suppose that there is a point $z\in\tilde{K}$, a sequence $\{z_n\}_{n\in\mathbb{N}^+}\subset \tilde{K}$ and a $\hat{\varepsilon}>0$ such that 

$$\lim\limits_{n\rightarrow +\infty}\norm{z_n-z}=0\quad{\rm and}\quad \tilde{F}^{-1}_{nnh}z_n\notin B_{\hat{\varepsilon}}(\tilde{F}_{nnh}^{-1}z),$$ where $B_{\hat{\varepsilon}}(\tilde{F}_{nnh}^{-1}z)=\{x\in X:\,\,\norm{x-\tilde{F}_{nnh}^{-1}z}\leq \hat{\varepsilon}\} $. By the compactness of $\tilde{K}$, there is a strictly increasing subsequence $\{n_i\}_{i\in\mathbb{N}^+}\subset \mathbb{N}^+$ and $z_{-1}\in\tilde{K}$ such that  $\lim\limits_{i\rightarrow +\infty}\tilde{F}^{-1}_{nnh}z_{n_i}=z_{-1}$ and hence, $z_{-1}\in\notin {\rm Int} B_{\hat{\varepsilon}}(\tilde{F}_{nnh}^{-1}z)$, where ${\rm Int} B_{\hat{\varepsilon}}(\tilde{F}_{nnh}^{-1}z)$ is the interior of $B_{\hat{\varepsilon}}(\tilde{F}_{nnh}^{-1}z)$ in $X$. It then follows from the $C^1$-smoothness of $\tilde{F}_{nnh}$ that $\tilde{F}_{nnh}z_{-1}=z$ and hence, $z_{-1}=\tilde{F}^{-1}_{nnh} z$, a contradiction to $z_{-1}\in\notin {\rm Int} B_{\hat{\varepsilon}}(\tilde{F}_{nnh}^{-1}z)$. Thus, $\tilde{F}_{nnh}$ is a homoemorphism on $\tilde{K}$ when $\tilde{F}_{nnh}$ is injective on $\tilde{K}$.

Therefore, we have completed the proof.
\end{proof}

\begin{rem} In this section, if the $\tilde{F}_{nnh}$ is the time-one map of the $C^1$-smooth semiflow which admits a flow extension (see \cite{F-1,F-2,F-W-W-1,F-W-W-2,F-W-W-3, S-Y}) on its local attractor $\tilde{K}$ in additional, then $\tilde{F}_{nnh}$ is a homeomorphism on $\tilde{K}$, that is also a scene for the main theorem.
\end{rem}

\end{document}